\definecolor{frameRed}{RGB}{255, 51, 51}
\definecolor{frameOrange}{RGB}{255, 128, 0}
\definecolor{frameGreen}{RGB}{0, 204, 0}
\newtheorem{theorem}{Theorem}[section]
\newtheorem{lemma}[theorem]{Lemma}
\newtheorem{assumption}[theorem]{Assumption}
\newtheorem{problem}[theorem]{Problem}
\newtheorem{remark}[theorem]{Remark}
\newtheorem{corollary}[theorem]{Corollary}
\numberwithin{equation}{section}
\renewcommand\paragraph[1]{\@startsection{paragraph}{4}{\parindent}%
{0.1ex \@plus 0.1ex \@minus 0.2ex}
{-1em}
{\normalfont\itshape\normalsize}*{#1.\quad}}
\begin{document}
    \title[Budgeted Multi-Level Monte Carlo Method]
    {\Large A Budgeted Multi-Level Monte Carlo Method for \\ Full Field Estimates of Multi-PDE Problems}
    \author{Niklas Baumgarten}
    \author{Robert Kutri}
    \author{Robert Scheichl}
    \date{\today}

    \maketitle

    \let\thefootnote\relax\footnotetext{\texttt{niklas.baumgarten@uni-heidelberg.de}}

    \begin{abstract}
    We present a high-performance budgeted multi-level Monte Carlo method
    for estimates on the entire spatial domain of multi-PDE problems with random input data.
    The method is designed to operate optimally within memory and CPU-time constraints
    and eliminates the need for a priori knowledge of the problem's regularity
    and the algorithm's potential memory demand.
    To achieve this, we build on the budgeted multi-level Monte Carlo framework
    [Baumgarten et al., \textit{SIAM/ASA J.~Uncertain.}, \textbf{12}, 2024]
    and enhance it with a sparse multi-index update algorithm operating on a dynamically
    assembled parallel data structure to enable estimates of the full field solution.
    We demonstrate numerically and provide mathematical proof
    that this update algorithm allows computing the full spatial domain
    estimates at the same CPU-time cost as a single quantity of
    interest, and that the maximum memory usage is similar to the
    memory demands of the deterministic formulation of the problem
    despite solving the stochastic formulation in parallel.
    We apply the method to a sequence of interlinked PDE problems,
    ranging from a stochastic partial differential equation for sampling
    random fields that serve as the diffusion coefficient in an
    elliptic subsurface flow problem, to a hyperbolic PDE
    describing mass transport in the resulting flux field.
\end{abstract}

    \bigskip

    \section{Introduction}
\label{sec:introduction}

The acquisition and management of comprehensive data is a
significant challenge for complex computational systems, often
imposed by the limited amount of
available memory to hold and the lack of computing power to process this data.
Hence, the given computational resources act as a constraint on the
computational result.
This certainly is the case in
Uncertainty Quantification (UQ),
particularly when applied to challenging models such as
high-dimensional Partial Differential Equations (PDEs) with an
uncertain input due to measurement noise or parameter fluctuations.
Here, the sheer volume of data already introduces the difficulty.

A common approach to UQ for PDEs is to employ model order reduction and surrogate modeling.
Additionally, the focus has been on the isolation, prioritization and estimation
of quantities of interest (QoIs) associated with the system’s solution,
rather than spatially fully resolved estimates.
This paper intentionally moves away from model order reduction and from computing QoIs,
seeking to demonstrate that, through careful algorithmic design,
estimating statistical moments of a random field (RF)
solution on the complete spatial domain of the system
is no additional burden in terms of memory footprint or computing time.
We explain how this is shown mathematically
but also introduce the implementation
achieving this in practice.

While the theory on estimating the solution on the entire spatial domain is known,
we have not found a computational method that can reach this goal in a memory-efficient
and fully parallelized manner.
Our approach uses Multi-Level Monte Carlo (MLMC) methods, introduced
in~\cite{giles2008multilevel, heinrich2001multilevel} and further developed for
elliptic~\cite{barth2011multi, charrier2013finite, cliffe2011multilevel, teckentrup2013further}
and hyperbolic PDEs~\cite{mishra2012multi, mishra2016multi}.
We 
build upon {the theoretical work in}~\cite{bierig2015convergence, bierig2016estimation}
and combine it with the Budgeted Multi-Level Monte Carlo (BMLMC) method~\cite{baumgarten2024fully}.

One notable advantage of the MLMC method lies in its non-intrusive implementation.
This means that existing software, e.g.~finite element (FE) implementations,
can be leveraged without requiring substantial modifications to the code base.
{In contrast to other works, in particular \cite{baumgarten2024fully},
this paper sacrifices} the non-intrusive nature inherent in MLMC
by combining it with a parallel update protocol.
In doing so, we achieve precise control over the algorithm's memory consumption,
and thereby gain the capability to compute estimates of the solution across
the entire spatial domain on the fly without postprocessing
and without performance degradation.

The underpinning methodology, i.e.~the BMLMC method, incorporates various
concepts from continuation MLMC~\cite{collier2015continuation}
and adaptive MLMC~\cite{giles2015multilevel}.
The central idea is to impose the available computational resources as a budget
while minimizing the estimated error.
This transforms the search for the optimal sample sequence of the MLMC method
into an NP-hard knapsack problem.
A similar point of view is also taken in~\cite{schaden2020multilevel} and the
multi-fidelity method~\cite{peherstorfer2018survey}.
However, a crucial distinction between the BMLMC method and
the multi-fidelity method is that there is no initial cost
associated with the construction of the low-fidelity models.
Furthermore, the method is designed to use preassigned computational
resources on a parallel computing cluster optimally
through distributed dynamic programming (DDP) techniques.

Importantly, {the budget of the BMLMC method,
which in \cite{baumgarten2024fully} is constrained only} by CPU-time
$\abs{\cP} \cdot {\rT}_{\rB}$ with $\abs{\cP}$ being the total number of available CPUs
and $\rT_{\rB}$ {the computational time budget,
is extended} by incorporating an additional memory constraint $\mathrm{Mem}_{\rB}$.
As a result, this extension eliminates the need for prior
knowledge about the problem and the discretization,
ensuring successful execution of the method.

The theoretical framework for estimating statistical moments of RF solutions on the entire
spatial domain via MLMC is well-established~\cite{barth2011multi,bierig2015convergence,bierig2016estimation}.
However, implementation for large-scale applications
on distributed memory introduces new challenges.
Careless algorithmic design may lead to a dramatic increase in memory
consumption, degradation of the computational performance, or
require storage of huge amounts of data on a hard-drive.
In particular, a novel communication protocol
with appropriate distribution rules
is necessary to sustain data integrity throughout the simulation.
Thus, our {novel contributions -- significantly extending
  \cite{baumgarten2024fully} -- are summarized}
as follows:

\paragraph{Memory Constraint and Sparse Multi-Index Update Algorithm}
Our work offers a mathematical proof asserting that, through the use of a sparse multi-index update algorithm,
the memory usage remains comparable to that of solving the deterministic problem with the same spatial resolution.
This justifies imposing a memory constraint on the MLMC method, which
is a significant improvement {on \cite{baumgarten2024fully}}
in robustness and usability in computationally challenging settings.
We achieve this even for {full, spatially distributed} domain estimates without compromising
computing performance due to excessive communication overheads.
By extending the techniques to stably update statistical moments, first
published in~\cite{chan1983algorithms, welford1962note}
and further developed in~\cite{pebay2016numerically}, the RF estimates
can be computed in a single pass through the samples.
To support our claims, we provide empirical evidence in our numerical experiments.
A crucial element of the update algorithm is the introduction of a communication split index, denoted by $s$.
When combined with the discretization level, represented by the index $\ell$,
this leads to the method essentially being a variant of a sparse multi-index method.
Although our algorithm shares similarities with existing multi-index
methods~\cite{dick2019improved, haji2016multi, haji2018multilevel},
and sparse grids~\cite{bungartz2004sparse},
the construction of the multi-index set differs.
We build upon the sparsity inherent in the multi-sample
parallelization scheme of~\cite{baumgarten2024fully}.

\paragraph{Multi-PDE Problems and Parallel Sampling of RFs}
The proposed method can be applied to forward UQ problems comprising several
intricate PDE models with high-dimensional random input data.
The numerical study consists of a series of computations, starting with the generation of Gaussian RFs
by solving Stochastic Partial Differential Equations (SPDEs).
This is followed by the computation of the solution of a subsurface flow problem
and a hyperbolic mass transport problem in the resulting random flux field.
Similar applications are investigated
in~\cite{baumgarten2021parallel, kumar2018multigrid, muller2013multilevel}.

One significant benefit of our methodology and of the proposed distributed data
structure is that we can remain agnostic with regard to the specific finite element (FE)
space used for each partial differential equation (PDE).
In particular, we use Lagrange elements for the SPDE sampling,
a mixed formulation with Raviart-Thomas elements for the subsurface flow and
discontinuous Galerkin elements for the hyperbolic mass transport.
We make use of the SPDE approach for sampling GRFs with Mat{\'e}rn covariance,
as introduced in~\cite{lindgren2011explicit}.
To realize the sampling on the same distributed data structure without
enlarging the domain, we rely on new developments published in~\cite{kutri2024dirichlet}.
This is essential to satisfy the imposed memory constraints.

Moreover, we present a formalism that enables the coupling of disparate PDEs
within a unified framework, employing a currying technique.
This theoretical formulation of the computational chain is motivated by our software
design and underlines the tight correspondence between theory and implementation.
In this paper, we use the term ‘Multi-X Finite Element Method’ (or MX-FEM)
to encapsulate this functionality.
It comprises three key elements:
the multi-\textit{sample} FE space of~\cite{baumgarten2024fully}, the
sparse multi-\textit{index} FE update algorithm 
and the given multi-\textit{PDE} problem.
This term is inspired by the work in~\cite{beck2020hp, krais2021flexi}.

\paragraph{Possible Applications}
{Finally, we conclude with a summary of
possible applications of the methodology: (i)}
Computing central moments across the full spatial domain is essential
for accurate \textit{climate simulations and weather forecasts}.
These models, based on complex PDEs, are computationally expensive and
prone to uncertainty. {(ii) \ Uncertainties} in \textit{material science} can drastically affect the material properties.
Propagating and stochastically homogenizing these uncertainties is key to understanding the material.
Techniques from~\cite{khristenko2020statistical, duswald2024finite} for sampling
realistic structures can be easily integrated into our SPDE-based approach.
{(iii)} \ \textit{Gradient descent methods for
uncertain, PDE-constrained, distributed optimal control problems},
as developed in~\cite{geiersbach2020stochastic, martin2021complexity},
require the estimation of stochastic gradients,
which are RFs over the spatial domain.
This application originally motivated developing
the full field estimator presented here.

\paragraph{Outline}
In~\Cref{sec:assumptions-and-notation}, we establish the necessary assumptions and notation.
\Cref{sec:full-spatial-domain-budgeted-multi-level-monte-carlo-method}, mainly based
on~\cite{baumgarten2024fully} and~\cite{bierig2016estimation}, introduces the theoretical framework.
\Cref{subsec:introduction-to-multi-level-monte-carlo}
recalls the MLMC idea and introduces a posteriori error estimators
which can be computed via accumulative updates as shown in~\Cref{lemma:accumulation-of-z}.
These updates are used in the BMLMC method which is discussed
in~\Cref{subsec:budgeted-multi-level-monte-carlo-algorithm}
and finally summarized in~\Cref{alg:bmlmc}.
\Cref{sec:multi-x-finite-elelmnt-method} recalls the definition of a multi-\textit{sample}
FE space in~\Cref{subsec:multi-sample-finite-element-space} and explains
the sparse multi-\textit{index} FE update algorithm
in~\Cref{subsec:sparse-multi-index-finite-element-update-algorithm}.
The main result in this section is~\Cref{lemma:memory-consumption} which enables us to impose the memory constraint.
As another consequence, we deduce a lower bound on the smallest achievable root mean squared (RMSE) error
in~\Cref{corollary:bmlmc-corollary}, {complementing} the upper bound derived in~\cite{baumgarten2024fully}.
Furthermore, we discuss the multi-\textit{PDE} FE simulations formulated with a currying technique
in~\Cref{subsec:multi-pde-finite-element-simulations}, and most notably present \Cref{alg:mx-fem} with
memory efficient Gaussian RF sampling based on the ideas in~\cite{kutri2024dirichlet}.
In~\Cref{sec:numerical-experiments}, we present our empirical evidence and the numerical experiments
based on the software M++~\cite{baumgarten2021parallel} in
version~\cite{wieners2024mpp341}.
We finish with a short discussion in~\Cref{sec:discussion-and-outlook}.

    \section{Assumptions and Notation}
\label{sec:assumptions-and-notation}

We aim for a methodology to estimate RF solutions for intricate PDE models
where the randomness is inherited from the input data.
Our notation closely follows~\cite{baumgarten2023fully, baumgarten2024fully},
while definitions and the theoretical frameworks are similar to the ones
in~\cite{bierig2015convergence, bierig2016estimation}.
We assume that the problem of interest is imposed on
a bounded polygonal domain $\cD \subset \RR^d$,
with spatial dimension $d \in \set{1, 2, 3}$, and
on a complete probability space $(\Omega, \mathcal{F}, \PP)$.
The goal is to estimate moments of the solution on
the full spatial domain $\cD \subset \RR^d$ to some PDE
with randomly distributed input data.
Thus, we search for the RF solution $\bu \colon \Omega \times \cD \rightarrow \RR^J$,
$J \in \NN$ being the number of components of the solution field,
such that it satisfies the well-posed model of multiple coupled PDEs
\begin{equation}
    \label{eq:generic-pde-model}
    \cG[\omega] (\bu(\omega, \bx)) = \bb(\omega, \bx)
\end{equation}
where $\omega \in \Omega$ corresponds to an outcome
in the probability space and $\bx \in \cD$ denotes the spatial location.
The generic (non-linear) operator $\cG[\omega]$, representing the coupled PDE models,
and the forcing term $\bb(\omega, \bx)$ are subject to randomness due to uncertainty in the input data.
Therefore, we are interested in finding estimates to the above RF solution
in the Bochner space $\rL^{2}(\Omega, V)$ which contains all
$\rL^2$-integrable maps from the probability space $(\Omega, \mathcal{F}, \PP)$
to a Banach space $V$ and has the norm
\begin{align*}
    \norm{\bv}_{\rL^{2}(\Omega, V)}^2 \coloneqq
    \int_{\Omega} \norm{\bv(\omega)}_V^2 \rd \PP(\omega)
\end{align*}
Whenever we use this space as a solution space to some PDE, we assume that $V$ is a Hilbert space
with the inner product $\sprod{\cdot, \cdot}_V$.

To compute approximate estimates,
we assume that (i): every outcome $\omega^{(m)} \in \Omega$,
with $m=1, \dots, M$ as sample index,
can be represented by a finite dimensional vector $\by^{(m)} = (y_1, \dots, y_K)^\top \in \Xi \subset \RR^K$
and (ii): a FE approximation $\bu_\ell(\by^{(m)}, \cdot) \in V_{\ell}$
to the true solution $\bu(\omega, \cdot) \in V$, where $V_{\ell}$
is some appropriate FE space to $V$.

For example, an approximation for the mean field can be computed by a simple
Monte Carlo method of FE solutions, i.e.,
\begin{equation}
    V \ni \EE[\bu]
    \coloneqq \int_{\Omega} \bu(\omega, \cdot) \rd \PP
    \approx \frac{1}M \sum_{m=1}^M \bu_\ell(\by^{(m)}, \cdot)
    \eqqcolon E_{M}^{\text{MC}}[\bu_{\ell}] \in \rL^1(\Omega, V_\ell)
    \,,
    \label{eq:mc-estimator}
\end{equation}
where all $\by^{(m)} \in \Xi \subset \RR^K$ are assumed to be independent and identically distributed.
Note that the Monte Carlo estimator $E_{M}^{\text{MC}}[\bu_\ell] \in \rL^1(\Omega, V_\ell)$,
unlike the true mean field $\EE[\bu] \in V$, is a RF.
For simplicity, we will drop the explicit dependency on
$\bx \in \cD$ and $\omega \in \Omega$, but emphasize that we work on RFs throughout the paper.
We also occasionally write $\bu_\ell^{(m)}$ for short instead of $\bu_\ell(\by^{(m)}, \cdot)$.

As in~\cite{bierig2015convergence}, we quantify the quality of approximation~\eqref{eq:mc-estimator}
by the mean squared error (MSE), given by
\begin{align*}
    \mathrm{err}^{\mathrm{MSE}}
    \coloneqq \EE \squarelr{\norm{E_{M}^{\text{MC}}[\bu_\ell] - \EE[{\bu}]}_V^2}
    = \underbrace{M^{-1} \norm{\bu_\ell - \EE[\bu_\ell]}^2_{\rL^2(\Omega, V)}}_{\eqqcolon \err^{\mathrm{sam}}}
    + \underbrace{\norm{\EE[\bu_\ell - \bu]}_{V \phantom{(}}^2}_{\eqqcolon \err^{\mathrm{num}}}.
\end{align*}

To quantify the cost of approximation~\eqref{eq:mc-estimator}, we consider the $\epsilon$-cost $\rC_{\epsilon}$,
which represents the computational resources required to achieve a root mean squared error
$\mathrm{err}^{\mathrm{RMSE}} \coloneqq ({\mathrm{err}^{\mathrm{MSE}}})^{1/2}$ smaller than $\epsilon > 0$.
In~\cite{baumgarten2024fully}, this cost is quantified in units of CPU seconds,
a measurement of a real world quantity denoted by the random number $\rC^{\rC \rT}$.
In this paper, we extend this by a run-time measurement of the memory footprint in megabytes
and denote this cost by $\rC^{\mathrm{Mem}}$.
Typically, the theory is formulated for quantities of interest (QoI)
$\rQ \colon V \rightarrow \RR$ rather than the complete RF.
To be able to compare both approaches, we consider in this paper
also $\rQ_\ell \coloneqq \norm{\bu_\ell}_V$.

In the upcoming~\Cref{sec:full-spatial-domain-budgeted-multi-level-monte-carlo-method},
we recall the MLMC method which gets its computational advantage from the variance reduction
in estimating the RFs and QoIs
\begin{align}
    \label{eq:definition-vell}
    \bv_\ell \coloneqq \bu_\ell - \rP_{\ell - 1}^{\ell} \bu_{\ell - 1} \quad \bv_0 \coloneqq \bu_0
    \quad \text{ and } \quad
    \rY \coloneqq \rQ_{\ell} - \rQ_{\ell-1} \quad \rY_0 \coloneqq \rQ_{0},
\end{align}
respectively.
Here, we use a linear isometric
projection operator $\rP_{\ell - 1}^{\ell} \colon V_{\ell - 1} \rightarrow V_{\ell}$
which maps the FE solution $\bu_{\ell - 1} \in V_{\ell - 1}$ to the FE space $V_{\ell}$
(cf.~\cite{briant2023filtered} for a similar approach).
Finally, we collect all assumptions required for the MLMC estimation.

\begin{assumption}
    \label{assumption:mlmc}
    For the exponents $\alpha_{\bu}, \alpha_{\rQ}, \beta_{\bv}, \beta_{\rY}, \gamma_{\rC\rT}, \gamma_{\mathrm{Mem}} > 0$
    and the constants $c_{{\bu}}, c_{{\rQ}}$, $c_{{\bv}}, c_{{\rY}}$, $c_{{\rC\rT}}, c_{{\mathrm{Mem}}} > 0$
    all assumed to be independent of $\ell$, the FE approximation to problem~\eqref{eq:generic-pde-model}
    with the mesh diameter $h_\ell =2^{-\ell} h_0$ satisfies:

    \vspace{-2mm}

    \hspace{-3mm}
    \begin{minipage}{0.51\textwidth}
        \begin{subequations}
            \label{eq:assumptions-random-field}
            \begin{align}
                \label{eq:assumption-alpha-u}
                \norm{\EE[{\bu}_\ell - \bu]}_V \,\,\,\, &\leq \,\,\,\, c_{{\bu}} h_\ell^{\alpha_{\bu}} \\
                \label{eq:assumption-beta-v}
                \norm{\bv_\ell - \EE[\bv_\ell]}^2_{\rL^2(\Omega, V)} &\leq \,\,\,\, c_{{\bv}} h_\ell^{\beta_{\bv}} \\
                \label{eq:assumption-gamma-mem}
                \rC^{\mathrm{Mem}} (\bv_\ell) \quad\,\,\,\, &\leq c_{{\mathrm{Mem}}} h_\ell^{-\gamma_{\mathrm{Mem}}}
            \end{align}
        \end{subequations}
    \end{minipage}
    \begin{minipage}{0.45\textwidth}
        \begin{subequations}
            \label{eq:assumptions-qoi}
            \begin{align}
                \label{eq:assumption-alpha-Q}
                \abs{\EE[{\rQ}_\ell - \rQ]} \,\,\,\,\,\, &\leq \,\, c_{{\rQ}} h_\ell^{\alpha_{\rQ}} \\
                \label{eq:assumption-beta-Y}
                \EE[(\rY_\ell - \EE[\rY_\ell])^2] &\leq \,\, c_{{\rY}} h_\ell^{\beta_{\rY}} \\
                \label{eq:assumption-gamma-CT}
                \rC^{\rC\rT} ({\bv}_\ell^{(m)}) \,\,\,\,\,\, &\leq c_{{\rC\rT}} h_\ell^{-\gamma_{\rC\rT}}
            \end{align}
        \end{subequations}
    \end{minipage}
\end{assumption}

\begin{remark}
    \label{remark:assumptions}
    \begin{itemize}
        \item[(a)]
        The assumptions~\eqref{eq:assumptions-qoi} are the foundation for~\cite{baumgarten2024fully}.
        Here, we complement them by~\eqref{eq:assumptions-random-field},
        bearing in mind the importance of the memory footprint in the RF setting.

        \item[(b)]
        Following~\cite{giles2015multilevel}, we occasionally use a simplification to~\eqref{eq:assumption-alpha-u}
        and~\eqref{eq:assumption-alpha-Q}, considering $\EE[\bv_\ell]$ and $\EE[\rY_\ell]$, respectively.

        \item[(c)]
        In practice, the cost assumption~\eqref{eq:assumption-gamma-CT} is
        non-homogeneous and fairly challenging to measure in a parallel algorithm.
        We therefore consider a CPU-time estimator $\widehat{\rC}^{\mathrm{CT}}_{\ell}$ rather than the
        CPU-time cost of a single sample $\rC^{\rC\rT}({\bv}_\ell^{(m)})$.

        \item[(d)]
        As we will explore in~\Cref{subsec:sparse-multi-index-finite-element-update-algorithm},
        it is more useful in our framework to measure the memory consumption
        in~\eqref{eq:assumption-gamma-mem} in terms of FE cells $\abs{\cK_\ell}$ rather than
        the mesh-size $h_\ell$.
        This is because parallelization over the samples is achieved by introducing further FE cells
        that increase the measured memory footprint $\widehat{\rC}_{\ell}^{\mathrm{Mem}}$ on level $\ell$ as well.

        \item[(e)]
        It will also be more appropriate to think about~\eqref{eq:assumption-alpha-u} in terms of
        FE cells $\abs{\cK_\ell}$ rather than the mesh-size $h_\ell$ to
        show the lower bound of~\Cref{corollary:bmlmc-corollary}.
    \end{itemize}
\end{remark}

    \section{Budgeted Multi-Level Monte Carlo Method}
\label{sec:full-spatial-domain-budgeted-multi-level-monte-carlo-method}

In this section, we introduce the MLMC algorithm for the estimation
of mean-fields on the complete spatial domain $\cD \subset \RR^d$.
We begin with~\Cref{subsec:introduction-to-multi-level-monte-carlo} by covering the theory,
further notation and the used a posteriori error estimators.
Following on from that, in~\Cref{subsec:budgeted-multi-level-monte-carlo-algorithm},
we introduce~\Cref{alg:bmlmc} to optimally compute the entire spatial domain approximation
within a given CPU-time and memory budget.
Noteable results are~\Cref{lemma:relation-Y-v} and~\Cref{corollary:relation-beta},
which connect the assumptions~\eqref{eq:assumption-beta-v} and~\eqref{eq:assumption-beta-Y},
as well as the formulae given in~\Cref{lemma:accumulation-of-z}, which are used to argue that
the BMLMC method for RF estimates satisfies Bellman's optimality principle.

\subsection{Introduction to Multi-Level Monte Carlo}
\label{subsec:introduction-to-multi-level-monte-carlo}

We start by recapitulating the classical multi-level Monte Carlo method.
To this end, we introduce models based on
different discretization levels $\ell=0,\dots,L$,
giving a hierarchy of nested FE meshes with the cells $\{\cK_{\ell}\}_{\ell=0}^L$
and with decreasing mesh widths $h_\ell =  2^{-\ell} h_0$.
For the fixed finest approximation on level $L$,
the approximate mean field $\EE[\bu_L] \in V_L$
can be expanded using~\eqref{eq:definition-vell} into
a telescoping sum over the levels:
\begin{align*}
    \EE[{\bu}_L]
    &= \rP_{0}^{L} \, \EE \squarelr{{\bu_{0}}}
    + \sum_{\ell=1}^L \rP_{\ell}^{L} \,
    \EE\squarelr{{\bu}_{\ell} - \rP_{\ell-1}^{\ell} {\bu}_{\ell-1}}
    = \sum_{\ell=0}^L \rP_{\ell}^{L} \EE[\bv_{\ell}]
\end{align*}
Here, we make use of the linear isotropic
transfer operators $\rP_{\ell}^L \colon V_\ell \rightarrow V_L$ defined by
the product $\rP_{\ell}^L \coloneqq \prod_{\ell'=\ell}^{L-1} P_{\ell'}^{\ell'+1}$.
Each expected value of $\bv_\ell$ can be
estimated individually with a MC method, resulting in what we refer to as
the full mean-field MLMC estimator in $\rL^1(\Omega, V_\ell)$
\begin{equation}
    \label{eq:mlmc-estimator-full-solution}
    E^{\text{MLMC}}_{\{M_\ell\}_{\ell=0}^L}[\bu_L]
    = \sum_{\ell=0}^L \rP_{\ell}^{L} E_{M_\ell}^{\text{MC}}[\bv_{\ell}]
    \quad \text{with} \quad
    E_{M_\ell}^{\text{MC}}[\bv_{\ell}] = \frac{1}{M_\ell} \sum_{m=1}^{M_\ell} \bv_\ell^{(m)},
\end{equation}
where $\{M_\ell\}_{\ell=0}^L$ denotes the number of samples on each level.
In order to satisfy assumption~\eqref{eq:assumption-beta-v}, we emphasize that it
is important to use the same input sample $\by^{(m)}$ in computing each
$\bv_\ell^{(m)} \coloneqq {\bu}_{\ell}^{(m)} - \rP_{\ell-1}^{\ell} {\bu}_{\ell - 1}^{(m)}$,
up to interpolation or projection.

As the estimators for different levels $\ell$ are independent,
the MSE admits the decomposition
(cf.~e.g.~\cite[Theorem 3.1]{bierig2015convergence})
\begin{equation}
    \label{eq:mse-mlmc}
    \mathrm{err}^{\mathrm{MSE}} \roundlr{E^{\text{MLMC}}_{\{M_\ell\}_{\ell=0}^L}[\bu_L]}
    = \underbrace{\sum_{\ell=0}^L M_{\ell}^{-1}  \norm{{\bv_{\ell} - \EE[\bv_\ell]}}^2_{\rL^2(\Omega, V)}}_{\coloneqq {\err}^{\mathrm{sam}}}
    + \,\, {\underbrace{\norm{\EE[\bu_{L} - \bu]}_V^2}_{\coloneqq {\err}^{\mathrm{num}}}}.
\end{equation}
Under~\Cref{assumption:mlmc}, the cost to reach an error accuracy of $0 < \epsilon < \re^{-1}$ for~\eqref{eq:mse-mlmc}
can be controlled by an appropriate choice for the number of samples on each level $\set{M_\ell}_{\ell=0}^L$.
An optimal choice gives (cf. \cite[Theorem 3.2]{bierig2015convergence} or~\cite{cliffe2011multilevel, giles2008multilevel})
\begin{align*}
    \err^{\mathrm{RMSE}} \roundlr{E^{\mathrm{MLMC}}_{\{M_\ell\}_{\ell=0}^L}[\bu_L]} < \epsilon
    \quad \text{and} \quad
    \rC_\epsilon \roundlr{E^{\mathrm{MLMC}}_{\{M_\ell\}_{\ell=0}^L}[\bu_L]} \lesssim
    \begin{cases}
        \epsilon^{-2} & \beta > \gamma, \\
        \epsilon^{-2} \log(\epsilon)^2 & \beta = \gamma, \\
        \epsilon^{-2-(\gamma-\beta)/\alpha} & \beta < \gamma.
    \end{cases}
\end{align*}
Here, $\alpha, \beta, \gamma > 0$ can follow~\eqref{eq:assumptions-random-field}
or~\eqref{eq:assumptions-qoi} depending on the computational goal.

The approach of the BMLMC method is somewhat dual to this classical approach,
as it aims to minimize the error with respect to a computational budget and not the
computational cost for a prescribed accuracy.
While this is motivated by the application in HPC-settings and the fact that computational resources are always limited,
it also renders the optimization for finding the optimal sequence of samples an NP-hard combinatorial
resource allocation problem.

Before turning to the algorithmic details to solve this optimization problem,
we note that the MSE~\eqref{eq:mse-mlmc} itself is intractable.
Thus, we estimate it using separate estimators for the
two components, i.e.,
$\widehat{\err}^{\mathrm{MSE}} = \widehat{\err}^{\mathrm{sam}} + \widehat{\err}^{\mathrm{num}} \approx
{\err}^{\mathrm{sam}} + {\err}^{\mathrm{num}} = {\err}^{\mathrm{MSE}}$.
We compute $\widehat{\err}^{\mathrm{num}}$ by extending the technique
introduced in~\cite{giles2015multilevel} to mean-fields by exploiting
assumption~\eqref{eq:assumption-alpha-u}.
Specifically, we use
\begin{equation}
    \label{eq:squared-bias-estimate-mlmc}
    \widehat{\err}^{\text{num}} \coloneqq
    \roundlr{\max \set{\frac{{\|E_{M_{\ell}}^{\text{MC}}[\bv_{\ell}]\|_V}}{2^{{\widehat{\alpha}}_{\bu}} - 1}
    \, 2^{-{\widehat{\alpha}}_{\bu} (L - \ell)} \colon \ell = 1, \dots, L}}^2.
\end{equation}
To increase robustness, the exponent $\widehat{\alpha}_{\bu}$, which is an estimate for $\alpha_{\bu}$,
is determined by solving a regression problem, using data collected on the fly.
It is given through
\begin{equation}
    \label{eq:alpha-fit}
    \min_{(\widehat{\alpha}_{\bu}, \, \widehat{c}_{\bu})} \quad \sum_{\ell=1}^L
    \Big(\log_2 {\|E_{M_{\ell}}^{\text{MC}}[\bv_{\ell}]\|_V} + \widehat{\alpha}_{\bu} \ell - \widehat{c}_{\bu} \Big)^2.
\end{equation}
The same formula is used to estimate the other exponents in~\Cref{assumption:mlmc}.
We further compute $\widehat{\err}^{\mathrm{sam}}$ by
\begin{equation}
    \label{eq:sampling-error-estimator}
    \widehat{\err}^{\mathrm{sam}} \coloneqq \sum_{\ell=0}^L (M_{\ell}^2 - M_{\ell})^{-1} \, z^2_{\ell}[\bv_\ell],
    \,\,\,\,
    \text{where}
    \,\,\,\,
    z^2_{\ell}[\bv_\ell] \coloneqq \sum_{m=1}^{M_{\ell}} \norm{\bv_{\ell}^{(m)} - E^{\text{MC}}_{M_{\ell}}[\bv_{\ell}]}_V^2
\end{equation}
is the second order sum in the estimator for $\norm{{\bv_{\ell} - \EE[\bv_\ell]}}^2_{\rL^2(\Omega, V)}$
including Bessel's correction.
Similar to this sampling error estimator, a variance estimator for QoIs~(cf.~\cite{baumgarten2024fully})
and a component-wise sample variance field are computed in $\RR$ and $\rL^2(\Omega, V_\ell)$, respectively.
The sample variance field estimator is denoted by:
\begin{align*}
    V^{\text{MC}}_{M_{\ell}}[\bu_\ell]
    \coloneqq (M_{\ell} - 1)^{-1} S^{2}_{M_{\ell}}[\bu_\ell]
    \quad \text{with} \quad
    S^{2}_{M_{\ell}}[\bu_\ell] \coloneqq \sum_{m=1}^{M_{\ell}}
    \roundlr{\bu_\ell^{(m)} - E^{\text{MC}}_{M_{\ell}}[\bu_\ell]}^2
\end{align*}
Lastly, we state a relation between the sampling error of the RF and the QoI.

\begin{lemma}
    \label{lemma:relation-Y-v}
    The variance of $\rY_\ell$ and the Bochner norm of $\bv_\ell$ are related by:
    \begin{align*}
        \VV[\rY_\ell] \coloneqq \EE[(\rY_\ell - \EE[\rY_\ell])^2],
        \quad
        \VV[\rY_\ell]
        \leq \norm{\bv_\ell - \EE[\bv_\ell]}_{\rL^2(\Omega, V)}^2
    \end{align*}
    Furthermore, the bias of the QoI and RF estimate share an upper bound:
    \begin{align*}
        \abs{\EE[\rQ_\ell - \rQ]} \leq \EE[\norm{\bu_\ell - \bu}_V]
        \quad \text{and} \quad
        \norm{\EE[\bu_\ell - \bu]}_V \leq \EE[\norm{\bu_\ell - \bu}_V]
    \end{align*}
\end{lemma}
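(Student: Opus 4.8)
The plan is to reduce both assertions to two elementary tools: the reverse triangle inequality (combined with the isometry of $\rP_{\ell-1}^\ell$) and Jensen's inequality for the Bochner integral. I would dispatch the two bias bounds first, since they are the most direct. For the field bias, convexity of the norm gives Jensen's inequality for Bochner integrals: as $\bu_\ell - \bu \in \rL^1(\Omega, V)$, one has $\norm{\EE[\bu_\ell - \bu]}_V \le \EE[\norm{\bu_\ell - \bu}_V]$. For the QoI bias, I would pull the expectation inside the modulus, $\abs{\EE[\rQ_\ell - \rQ]} \le \EE[\abs{\rQ_\ell - \rQ}]$, and then use $\rQ_\ell = \norm{\bu_\ell}_V$, $\rQ = \norm{\bu}_V$ together with the pointwise reverse triangle inequality $\abs{\norm{\bu_\ell}_V - \norm{\bu}_V} \le \norm{\bu_\ell - \bu}_V$; taking expectations closes the estimate.

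For the variance bound, the starting observation is again pointwise in $\omega$: using the isometry $\norm{\rP_{\ell-1}^\ell \bu_{\ell-1}}_V = \norm{\bu_{\ell-1}}_V$ and the reverse triangle inequality,
\begin{equation*}
\abs{\rY_\ell} = \abs{\norm{\bu_\ell}_V - \norm{\rP_{\ell-1}^\ell \bu_{\ell-1}}_V} \le \norm{\bu_\ell - \rP_{\ell-1}^\ell \bu_{\ell-1}}_V = \norm{\bv_\ell}_V .
\end{equation*}
Squaring and taking expectations yields $\EE[\rY_\ell^2] \le \EE[\norm{\bv_\ell}_V^2]$, and since $\VV[\rY_\ell] \le \EE[\rY_\ell^2]$ this already delivers the \emph{uncentered} bound $\VV[\rY_\ell] \le \norm{\bv_\ell}_{\rL^2(\Omega,V)}^2$. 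To reach the sharper centered right-hand side $\norm{\bv_\ell - \EE[\bv_\ell]}_{\rL^2(\Omega,V)}^2$, I would exploit the variational characterizations $\VV[\rY_\ell] = \min_{c \in \RR}\EE[(\rY_\ell - c)^2]$ and $\norm{\bv_\ell - \EE[\bv_\ell]}_{\rL^2(\Omega,V)}^2 = \min_{\bw \in V}\EE[\norm{\bv_\ell - \bw}_V^2]$, and look for a scalar center $c$ adapted to $\bw = \EE[\bv_\ell]$ so that the reverse triangle inequality can be applied to the shifted field $\bv_\ell - \bw$ rather than to $\bv_\ell$ itself.

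The main obstacle I anticipate is exactly this centering step. The pointwise estimate $\abs{\rY_\ell} \le \norm{\bv_\ell}_V$ relates \emph{uncentered} quantities, whereas the variance compares the scalar increment centered at $\EE[\rY_\ell]$ against the field increment centered at $\EE[\bv_\ell]$, and these two centerings are not compatible through the nonlinear map $\bv \mapsto \norm{\bv}_V$: passing from the uncentered to the centered bound requires precisely $\EE[\rY_\ell]^2 \ge \norm{\EE[\bv_\ell]}_V^2$. I would therefore try to produce a common reference point, namely a deterministic shift $\bw$ of $\bv_\ell$ with an accompanying constant $c = c(\bw)$ for which $\abs{\rY_\ell - c} \le \norm{\bv_\ell - \bw}_V$ holds pointwise, and verify that the relevant alignment of $\bu_\ell$ with $\rP_{\ell-1}^\ell \bu_{\ell-1}$ in the MLMC regime makes such a choice available. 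Showing that this reduction genuinely holds under the present assumptions—and is not merely a consequence of the crude reverse triangle inequality—is where the real work lies.
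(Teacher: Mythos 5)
Your first two steps are sound and coincide with the paper's own argument: the two bias bounds are obtained there, exactly as you propose, from Jensen's inequality for the Bochner integral together with the triangle and reverse triangle inequalities, and your pointwise estimate $\abs{\rY_\ell} \le \norm{\bv_\ell}_V$ via isometry of $\rP_{\ell-1}^{\ell}$ is the same opening move. But the obstacle you flag at the centering step is not merely ``where the real work lies'' --- it is fatal to the centered inequality as stated, and no choice of shift $\bw$ with constant $c(\bw)$ can repair it. Take $V = \RR$, $\rP_{\ell-1}^{\ell} = \mathrm{id}$, let $\bu_{\ell-1}(\omega) = \pm R$ with probability $1/2$ each for some $R > \epsilon > 0$, and set $\bu_\ell = \bu_{\ell-1} + \epsilon$. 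Then $\bv_\ell \equiv \epsilon$ is deterministic, so $\norm{\bv_\ell - \EE[\bv_\ell]}_{\rL^2(\Omega,V)}^2 = 0$, whereas $\rY_\ell = \abs{\bu_{\ell-1} + \epsilon} - \abs{\bu_{\ell-1}} = \pm\epsilon$, so $\VV[\rY_\ell] = \epsilon^2 > 0$. Your diagnosed obstruction $\EE[\rY_\ell]^2 \ge \norm{\EE[\bv_\ell]}_V^2$ is exactly what fails here ($0 \ge \epsilon^2$), and your hoped-for pointwise bound $\abs{\rY_\ell - c} \le \norm{\bv_\ell - \bw}_V$ with $\bw = \EE[\bv_\ell]$ would force $\rY_\ell$ to be almost surely constant in this example, which it is not. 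So the reduction you were searching for does not exist without additional hypotheses on the alignment of $\bu_\ell$ with $\rP_{\ell-1}^{\ell}\bu_{\ell-1}$.

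You should also know that the paper's proof conceals the same hole: its chain opens with $\VV[\rY_\ell] \le \VV[\norm{\bu_\ell - \rP_{\ell-1}^{\ell}\bu_{\ell-1}}_V]$, which tacitly uses that the pointwise domination $\abs{\rY_\ell} \le \norm{\bv_\ell}_V$ implies domination of variances --- false in general, since variance is not monotone under pointwise bounds on absolute values (in the example above $\VV[\norm{\bv_\ell}_V] = 0 < \VV[\rY_\ell]$); the remaining two links, namely $\VV[X] \le \EE[(X-c)^2]$ with $c = \norm{\EE[\bv_\ell]}_V$ followed by the reverse triangle inequality, are fine. What survives unconditionally is precisely your uncentered bound,
\begin{equation*}
    \VV[\rY_\ell] \,\le\, \EE[\rY_\ell^2] \,\le\, \norm{\bv_\ell}_{\rL^2(\Omega,V)}^2
    \,=\, \norm{\bv_\ell - \EE[\bv_\ell]}_{\rL^2(\Omega,V)}^2 + \norm{\EE[\bv_\ell]}_V^2,
\end{equation*}
where the last identity uses that $V$ is Hilbert. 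Since $\norm{\EE[\bv_\ell]}_V \lesssim h_\ell^{\alpha_{\bu}}$ by~\eqref{eq:assumption-alpha-u} in the simplified form of \Cref{remark:assumptions}(b), the discrepancy between the centered and uncentered bounds is of order $h_\ell^{2\alpha_{\bu}}$, so the rate comparison of \Cref{corollary:relation-beta} still goes through whenever $2\alpha_{\bu} \ge \beta_{\bv}$. That weaker statement --- uncentered bound plus an explicit bias term --- is the honest substitute for \Cref{lemma:relation-Y-v}, and your proposal, completed through $\VV[\rY_\ell] \le \EE[\rY_\ell^2]$, proves exactly it; the centered inequality itself cannot be proven, by you or by the paper, without an extra assumption controlling $\norm{\EE[\bv_\ell]}_V$ relative to $\abs{\EE[\rY_\ell]}$.
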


\begin{proof}
    Using the isometry of the projection operator $\|\bu_{\ell-1}\|_V = \|{\rP_{\ell-1}^{\ell} \bu_{\ell - 1}}\|_V$,
    the definitions of $\rQ_\ell \coloneqq \norm{\bu_\ell}_V$, $\rY_\ell \coloneqq \rQ_\ell - \rQ_{\ell-1}$ and
    $\bv_\ell \coloneqq \bu_\ell - \rP_{\ell-1}^{\ell} \bu_{\ell - 1}$,
    as well as the reversed triangle inequality and Jensen's inequality,
    we obtain
    \begin{align*}
        \VV[\rY_\ell] \leq \VV[\|\bu_\ell - \rP_{\ell-1}^{\ell} \bu_{\ell-1}\|_V]
        \leq \EE[(\norm{\bv_\ell}_V - \norm{\EE[\bv_\ell]}_V)^2]
        \leq \norm{\bv_\ell - \EE[\bv_\ell]}_{\rL^2(\Omega, V)}^2
    \end{align*}
    The second bound follows from the triangle and the reversed triangle inequality.
\end{proof}

\begin{corollary}
    \label{corollary:relation-beta}
    \hspace{8mm}
    If we strengthen assumption~\eqref{eq:assumption-beta-Y} to $\VV[\rY_\ell] \sim h_\ell^{\beta_{\rY}}$
    and~\eqref{eq:assumption-beta-v} to
    $\norm{\bv_\ell - \EE[\bv_\ell]}_{\rL^2(\Omega, V)} \sim h_\ell^{\beta_{\bv}}$,
    we obtain $\beta_{\rY} \leq \beta_{\bv}$,
    i.e., the variance reduction for the QoI is at least as fast
    as the sampling error reduction for the RF.
\end{corollary}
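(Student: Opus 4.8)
The plan is to read \Cref{lemma:relation-Y-v} as the only substantive ingredient and to convert the level-wise \emph{magnitude} inequality it provides into an inequality between decay \emph{rates}. The key structural observation is that the two-sided strengthenings are exactly what makes such a rate comparison possible: from the original one-sided bounds \eqref{eq:assumption-beta-Y} and \eqref{eq:assumption-beta-v} (both of the form ``$\le c\, h_\ell^{\beta}$'') one cannot deduce any relation between $\beta_{\rY}$ and $\beta_{\bv}$, since upper bounds on two quantities ordered by $\le$ carry no information about their relative slopes. Upgrading $\VV[\rY_\ell] \sim h_\ell^{\beta_{\rY}}$ to a genuine $\Theta$-statement supplies the missing lower bound that lets \Cref{lemma:relation-Y-v} bite.

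Concretely, I would isolate the useful halves of the two $\Theta$-statements: keep the lower bound $\VV[\rY_\ell] \ge c\, h_\ell^{\beta_{\rY}}$ coming from the sharpened \eqref{eq:assumption-beta-Y}, and keep the upper bound on $\norm{\bv_\ell - \EE[\bv_\ell]}_{\rL^2(\Omega,V)}^2$ coming from \eqref{eq:assumption-beta-v}. Chaining these through the lemma gives, for every level $\ell$, a single estimate of the form $c\, h_\ell^{\beta_{\rY}} \le \VV[\rY_\ell] \le \norm{\bv_\ell - \EE[\bv_\ell]}_{\rL^2(\Omega,V)}^2 \le C\, h_\ell^{p}$, with $p$ the exponent read off from the $\bv_\ell$-rate. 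I would then substitute $h_\ell = 2^{-\ell} h_0$, apply $\log_2$, divide by $\ell$, and let $\ell \to \infty$: the additive contributions $\log_2 c$, $\log_2 C$ and $\log_2 h_0$ are all $O(1)$ and are annihilated by the division by $\ell$, so only the leading slopes survive. The surviving inequality between slopes is precisely the exponent comparison asserted in the statement, i.e.\ the assertion that the QoI variance $\VV[\rY_\ell]$ contracts at least as fast as the RF sampling-error term $\norm{\bv_\ell - \EE[\bv_\ell]}_{\rL^2(\Omega,V)}^2$.

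The step I expect to demand the most care is bookkeeping rather than analysis. First, one must fix once and for all whether the $\bv_\ell$-hypothesis is read for the Bochner norm or for its square, since this shifts the exponent $p$ on the right-hand side by a factor of two and therefore feeds directly into the final inequality; the consistent choice (matching \eqref{eq:assumption-beta-v}, which is stated for the square) is essential. Second, one must verify that the correct one-sided halves have been paired, so that the direction of \Cref{lemma:relation-Y-v} is genuinely inherited by the limiting slope inequality -- the opposite pairing (upper bound on $\VV[\rY_\ell]$ with lower bound on the norm) chains into a vacuous statement. Once these conventions are pinned down the rest is a routine comparison of geometric rates, and it is worth recording that only the newly added lower bound on $\VV[\rY_\ell]$ is actually used, the relevant bound on $\bv_\ell$ being already available from \eqref{eq:assumption-beta-v}.
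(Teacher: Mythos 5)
Your proposal is correct and coincides with the paper's own (implicit) argument: the paper offers no separate proof of \Cref{corollary:relation-beta}, treating it as an immediate consequence of \Cref{lemma:relation-Y-v}, and your chaining of the one-sided halves of the two-sided rate assumptions through that lemma, followed by taking $\log_2$ with $h_\ell = 2^{-\ell}h_0$ and comparing slopes, is exactly the routine step left to the reader --- including the norm-versus-squared-norm bookkeeping you correctly flag as the only delicate point. Note only that what this argument actually yields is $\beta_{\rY} \geq \beta_{\bv}$ (indeed $\beta_{\rY} \geq 2\beta_{\bv}$ under the unsquared reading of the strengthened \eqref{eq:assumption-beta-v}), which matches the corollary's verbal claim that the QoI variance decays at least as fast as the RF sampling error and is confirmed by the paper's numerics; the displayed inequality ``$\beta_{\rY} \leq \beta_{\bv}$'' in the statement has the two exponents transposed, and your proof, which derives the direction consistent with \Cref{lemma:relation-Y-v}, is the correct one.
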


\noindent Empirical evidence follows in~\Cref{sec:numerical-experiments}.

\subsection{Budgeted Multi-Level Monte Carlo Algorithm}
\label{subsec:budgeted-multi-level-monte-carlo-algorithm}

With the theoretical foundations of the MLMC method at hand,
the objective is to identify a solution with the smallest
mean squared error whilst remaining within a CPU-time
$\abs{\cP} \cdot \rT_{\rB}$ and a memory ${\mathrm{Mem}}_{\rB}$ budget,
where $\abs{\cP}$ is the total
amount of processing units and $\rT_{\rB}$ is
an overall time constraint.
Formally, this leads to the following optimization problem:

\begin{problem}[MLMC Knapsack]
    \label{problem:approximated-mlmc-knapsack}
    Find the largest level $L$ and the sequence of sample numbers $\{M_\ell\}_{\ell=0}^L$,
    such that the estimated MSE is minimized,
    while staying within the CPU-time $\abs{\cP} \cdot \rT_{\rB}$
    and a memory ${\mathrm{Mem}}_{\rB}$ budget
    \begin{subequations}
        \label{eq:knapsack-mlmc}
        \begin{align}
            \label{eq:knapsack-mlmc-mse}
            \min_{(L, \{M_\ell\}_{\ell=0}^L)} \quad \widehat{\err}^{\mathrm{MSE}}
            &= \widehat{\err}^{\mathrm{sam}} + \widehat{\err}^{\mathrm{num}} \\
            \text{s.t.} \quad
            \sum_{\ell = 0}^L M_{\ell} \widehat{\rC}^{\mathrm{CT}}_{\ell}
            &\leq \abs{\cP} \cdot \rT_{\rB} \,,
            \label{eq:knapsack-mlmc-ct} \\[1mm]
            \widehat{c}_{\mathrm{Mem}} \cdot \abs{\cK_{L}}
            &< {\mathrm{Mem}}_{\rB} \,.
            \label{eq:knapsack-mlmc-mem}
        \end{align}
    \end{subequations}
\end{problem}
The quantities $\widehat{\rC}^{\mathrm{CT}}_{\ell}$ and $\widehat{c}_{\mathrm{Mem}}$
in the constraints~\eqref{eq:knapsack-mlmc-ct} and~\eqref{eq:knapsack-mlmc-mem}
are measured as the algorithm runs.
$\widehat{\rC}^{\mathrm{CT}}_{\ell}$ represents the
expected time of a single sample on level $\ell$ (cf.~\Cref{remark:assumptions} c)) and $\widehat{c}_{\mathrm{Mem}}$
is a constant translating the number FE cells to memory usage
(cf.~\Cref{remark:assumptions} d)).
For details regarding constraint~\eqref{eq:knapsack-mlmc-ct},
we refer to~\cite{baumgarten2024fully}.
The derivation of~\eqref{eq:knapsack-mlmc-mem} is given
in~\Cref{subsec:sparse-multi-index-finite-element-update-algorithm}.
To efficiently address~\Cref{problem:approximated-mlmc-knapsack},
we propose~\Cref{alg:bmlmc}, leveraging distributed dynamic programming (DDP) techniques.
Specifically, the algorithm decomposes the initial problem into overlapping subproblems,
which are then solved and distributed according to an optimal policy.
This approach involves the reuse of memoized and shared results to guide the process.

To generate the overlapping subproblems,
we adopt a decreasing sequence of accuracies $\epsilon_{\ttti}$,
as in the continuation MLMC method outlined in~\cite{collier2015continuation}.
Each subproblem is tackled as described in~\cite{giles2015multilevel}
to determine the optimal number of samples
with an optimal distribution rule as given in~\cite{baumgarten2024fully}.
This strategy is essential to manage the substantial computational load and
to face the NP-hardness of the problem.

Since~\Cref{alg:bmlmc} is based on estimation rounds indexed by $\ttti$,
we add this index to quantities we previously introduced.
That is, $M_{\ttti, \ell}$ is the accumulated number of samples, $\widehat{\err}_{\ttti}^{\text{sam}}$
is the estimated sampling error and $\widehat{\err}_{\ttti}^{\text{num}}$
is the estimator bias up to estimation round $\ttti$.
Further, the estimated computing-time cost
\begin{equation}
    \label{eq:ct-cost-estimation}
    \widehat{\rC}^{\mathrm{CT}}_{\ttti, \ell} \approx M_{\ttti, \ell}^{-1} \sum_{m=1}^{M_{\ttti, \ell}}
    \rC^{\mathrm{CT}}(\bv^{(m)}_\ell)
\end{equation}
and the largest level $L_{\ttti}$ are also indexed by $\ttti$.
The sample mean $E^{\text{MC}}_{\ttti, \ell}[\cdot]$ and
the sample variance $V^{\text{MC}}_{\ttti, \ell}[\cdot]$
of the RFs $\bu_{\ell}$ and $\bv_\ell$,
as well as the random variables $\|\bu_{\ell}\|_V$
and $\|\bv_\ell\|_V$, must be updated in each round.
However, due to memory constraints,
only the real-valued random variables can
afforded to be stored over the run-time.

Parameters to initialize the algorithm include an initial guess for a small
initial sample sequence $\{M_{0, \ell}^{\text{init}}\}_{\ell=0}^{{L_0}}$,
the preassigned CPU-time budget $\abs{\cP} \cdot \rT_{\rB}$, the memory constraint ${\mathrm{Mem}}_{\rB}$,
and two algorithmic hyperparameters:
the variance-bias tradeoff $\theta \in (0, 1)$ and the error reduction factor $\eta \in (0, 1)$.
Additionally, prior to executing the algorithm, the PDE models must be selected,
achieved by defining the inner body of the block \texttt{MX-FEM} in~\Cref{alg:bmlmc}.
Further details on this can be found in~\Cref{subsec:multi-pde-finite-element-simulations}
and in \Cref{alg:mx-fem} for a specific example.
For now, we provide a explanation of high-level functional pseudo-code given in~\Cref{alg:bmlmc}.

\paragraph{Data structures}
The algorithm maintains two data structures to store its state:
\texttt{data\_distr}, a distributed data structure to store the spatially distributed solution estimates,
e.g., the sample mean $E^{\text{MC}}_{\ttti, \ell}[\bu_\ell]$ on the level $\ell$,
and \texttt{data\_shared}, stored in a shared manner, e.g.,
containing the error estimates $\widehat{\err}_{\ttti}^{\mathrm{sam}}$ and $\widehat{\err}_{\ttti}^{\text{num}}$,
to guide the optimization.
Hence, \texttt{data\_distr} holds huge amounts of spatial data while
\texttt{data\_shared} only contains key quantities such that every
processing unit can hold a copy of.

\paragraph{The INIT function}
The algorithm starts with an initialization round,
denoted by the first \texttt{INIT} function in~\Cref{alg:bmlmc},
which takes the initial sample sequence $\{M_{0, \ell}^{\text{init}}\}_{\ell=0}^{{L_0}}$ as an input argument.
It proceeds to solve the multi-PDE problem on each level $\ell$ by invoking the \texttt{MX-FEM} function
on level $\ell$ with $M_{0, \ell}^{\text{init}}$.
The returned distributed data packages $\Delta \texttt{data\_distr}_\ell$ contain
on each level $\ell$ the first RF solution estimates, i.e., in the first round it contains
\begin{align*}
    \Delta \texttt{data\_distr}_\ell =
    \set{E^{\text{MC}}_{0, \ell}[\bu_\ell], S^{2}_{0, \ell}[\bu_\ell],
        E^{\text{MC}}_{0, \ell}[\bv_\ell], S^{2}_{0, \ell}[\bv_\ell], \dots}.
\end{align*}
With this first collected data, the algorithm initializes its state, estimates errors and exponents
with~\eqref{eq:squared-bias-estimate-mlmc},~\eqref{eq:alpha-fit},~\eqref{eq:sampling-error-estimator},
and stores the results in the data structures \texttt{data\_distr} and \texttt{data\_shared}.
Formally, this is expressed in the \texttt{Welford} function.
For further details, we refer to~\cite{baumgarten2024fully, pebay2016numerically}
and the last paragraph in this section.

Following the initialization round, the algorithm invokes the second \texttt{BMLMC} function with an
updated time budget and a reduced estimated mean squared error as new error target, i.e.~with:
\begin{align*}
    \underbrace{\rT_1 \leftarrow \rT_{\rB} - \sum_{\ell=0}^{L_{0}} M_{0, \ell}^{\text{init}}
        \widehat{\rC}^{\mathrm{CT}}_{0, \ell}}_{
        \text{reduced time-budget as new constraint}}
    \qquad
    \underbrace{\epsilon_1 \leftarrow \eta \cdot \widehat{\err}_{0}^{\text{MSE}}}_{\text{reduced MSE as new target}}
\end{align*}

\paragraph{The BMLMC function}
The \texttt{BMLMC} function in~\Cref{alg:bmlmc} essentially guides the optimization process.
In essence, this function oversees the compliance to the
constraints, determines the necessity of a new level $L_{\ttti}$ or additional
samples, and calls the \texttt{MX-FEM} and \texttt{Welford} functions to
collect and integrate new data, respectively.
In particular, the error control is done by the following conditions
\begin{align*}
    \underbrace{\widehat{\err}^{\text{num}}_{\ttti - 1} \geq (1 - \theta) \, \epsilon_{\ttti}^2}_{\text{bias too big, new } L_{\ttti} \text{ needed}}
    \quad \text{or} \quad
    \underbrace{\widehat{\err}^{\mathrm{sam}}_{\ttti - 1} \geq \theta \epsilon_{\ttti}^2}_{\text{sampling error too big, new samples needed}}.
\end{align*}
If the first condition is violated, the algorithm increases the level $L_{\ttti}$ and collects estimates on that level.
If the second condition is violated, the algorithm computes the optimal number of samples for the next round.
This is given by
\begin{equation}
    \label{eq:optimal-delta-Ml-estimated}
    \set{\Delta M_{\ttti, \ell}}_{\ell=0}^{L_{\ttti}}
    \coloneqq \set{\max\{{M}_{\ttti, \ell}^{\text{opt}} - M_{\ttti-1, \ell}, \, 0\}}_{\ell=0}^{L_{\ttti}},
\end{equation}
which uses the optimal total sample amount, derived in~\cite{giles2015multilevel}.
Using the second order sum of~\eqref{eq:sampling-error-estimator},
it is given on each level $\ell$ by
\begin{equation}
    \label{eq:optimal-Ml-estimated}
    M_{\ttti, \ell}^{\text{opt}}
    = \ceil {
        \roundlr{\sqrt{\theta} \epsilon_{\ttti}}^{-2}\!\!
        \sqrt{\frac{z^2_{{\ttti - 1, \ell}}[\bv_\ell]}{(M_{\ttti - 1, \ell} - 1) \, \widehat{\rC}^{\rC\rT}_{\ttti - 1, \ell}}}
        \roundlr{
            \sum_{\ell'=0}^{L_{\ttti}}
            \sqrt{\frac{z^2_{{\ttti - 1, \ell'}}[\bv_{\ell'}] \, \widehat{\rC}^{\rC\rT}_{\ttti-1, \ell'}}{M_{\ttti - 1, \ell'} - 1}} \,\,
        }
    }.
\end{equation}
The second order sum can be computed cumulatively over the rounds
as shown in \Cref{lemma:accumulation-of-z} in the last paragraph of this section.

If a new level is added or new samples are needed,
the algorithm checks whether this can be satisfied within
the time and memory constraints.
First, it checks whether the new samples can be afforded with
$\sum_{\ell=0}^{L_{\ttti}} \Delta M_{\ttti, \ell} \widehat{\rC}^{\text{CT}}_{\ttti - 1, \ell} > \rT_{\ttti}$,
i.e., whether the upcoming estimation is too expensive and adjusts the target if necessary,
but also
\begin{align*}
    \underbrace{\rT_{\ttti} < 0.05 \rT_{\rB}}_{
        \text{reached time constraint, return estimates}}
    \text{and} \qquad
    \underbrace{\widehat{c}_{\text{Mem}} \cdot \abs{\cK_{L_{\ttti}}} > {\mathrm{Mem}}_{\rB}}_{
        \text{reached memory budgetd, return estimates}}
\end{align*}
whether the constraints prevent further optimization.
A practical check for the time constraint is to ensure that
we return the solution as soon as 95\% of the time budget has been exhausted.
For a derivation on the memory constraint, we refer
to~\Cref{subsec:sparse-multi-index-finite-element-update-algorithm}.
In either case, the algorithm has found a solution
to \Cref{problem:approximated-mlmc-knapsack}
when reaching one of its constraints.

\paragraph{The Welford function}
Lastly, the \texttt{Welford} function in~\Cref{alg:bmlmc} is responsible for updating the data structures.
The details to this function are given in~\cite{baumgarten2024fully, pebay2016numerically}.
However, we present a special case of~\cite[Prop. 3.1]{pebay2016numerically} for
accumulating $z_{\ttti, \ell}[\bv_\ell]$.

\begin{lemma}
    \label{lemma:accumulation-of-z}
    The second order sum in~\eqref{eq:sampling-error-estimator} can be computed cumulatively by
    \begin{align*}
        z_{\ttti, \ell}^2[\bv_\ell] =
        z_{\ttti-1, \ell}^2[\bv_\ell] + \Delta z_{\ttti, \ell}^2[\bv_\ell]
        + \frac{M_{\ttti - 1, \ell} \Delta M_{\ttti, \ell}}{M_{\ttti, \ell}^{\mathrm{opt}}} \norm{\delta_{\ttti,\ell}[\bv_\ell]}_V^2
    \end{align*}

    \vspace{-5mm}

    \begin{align*}
        \text{with} \,\,\,
        \delta_{\ttti,\ell}[\bv_\ell] = \Delta &E^{\mathrm{MC}}_{\ttti, \ell}[\bv_\ell] - E^{\mathrm{MC}}_{{\ttti-1, \ell}}[\bv_\ell]
        , \,\,\,
        \Delta E^{\mathrm{MC}}_{\ttti, \ell}[\bv_\ell] = \frac{1}{\Delta M_{\ttti, \ell}}
        \sum_{m=1 + M_{\ttti-1, \ell}}^{M_{\ttti, \ell}^{\mathrm{opt}}} \bv_{\ell}^{(m)}
        \,\,\, \text{and} \\
        &\Delta z_{\ttti, \ell}^2[\bv_\ell]
        = \!\!\!\! \sum_{m=1 + M_{\ttti-1, \ell}}^{M_{\ttti, \ell}^{\mathrm{opt}}} \!\!\!
        \tnorm{\bv_{\ell}^{(m)}- \Delta E^{\mathrm{MC}}_{\ttti, \ell}[\bv_\ell]}_V^2 .
    \end{align*}
\end{lemma}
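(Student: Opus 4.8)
The plan is to prove this as a specialization of the general pairwise-update formula for centered sums of powers from~\cite[Prop.~3.1]{pebay2016numerically}. The key observation is that $z_{\ttti,\ell}^2[\bv_\ell]$ is exactly the second central moment sum $\sum_{m=1}^{M_{\ttti,\ell}}\|\bv_\ell^{(m)} - E^{\mathrm{MC}}_{\ttti,\ell}[\bv_\ell]\|_V^2$ over the accumulated sample set of size $M_{\ttti,\ell}^{\mathrm{opt}}$, and we wish to express it in terms of the analogous sum over the old block (of size $M_{\ttti-1,\ell}$) and the freshly added block (of size $\Delta M_{\ttti,\ell}$). So the structure mirrors merging two independent data partitions, which is precisely the setting of the cited proposition.

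First I would set up notation by partitioning the index set $\{1,\dots,M_{\ttti,\ell}^{\mathrm{opt}}\}$ into the old samples $A = \{1,\dots,M_{\ttti-1,\ell}\}$ and the new samples $B = \{1+M_{\ttti-1,\ell},\dots,M_{\ttti,\ell}^{\mathrm{opt}}\}$, noting $|A| = M_{\ttti-1,\ell}$, $|B| = \Delta M_{\ttti,\ell}$, and $|A|+|B| = M_{\ttti,\ell}^{\mathrm{opt}}$. The partial means are $E^{\mathrm{MC}}_{\ttti-1,\ell}[\bv_\ell]$ over $A$ and $\Delta E^{\mathrm{MC}}_{\ttti,\ell}[\bv_\ell]$ over $B$, and their difference is $\delta_{\ttti,\ell}[\bv_\ell]$. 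The within-block sums of squared deviations are $z_{\ttti-1,\ell}^2[\bv_\ell]$ (block $A$) and $\Delta z_{\ttti,\ell}^2[\bv_\ell]$ (block $B$). The central moment accumulation formula then states that the combined sum of squares equals the sum of the within-block sums plus a correction term quantifying the discrepancy between the two block means.

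The computational core is a short direct verification: write $\bv_\ell^{(m)} - E^{\mathrm{MC}}_{\ttti,\ell}[\bv_\ell]$ as $(\bv_\ell^{(m)} - \mu_A) + (\mu_A - E^{\mathrm{MC}}_{\ttti,\ell}[\bv_\ell])$ for $m \in A$ (and analogously with $\mu_B$ for $m \in B$), expand the squared $V$-norm using the inner product, and observe that the combined mean satisfies $E^{\mathrm{MC}}_{\ttti,\ell}[\bv_\ell] = (|A|\mu_A + |B|\mu_B)/(|A|+|B|)$. The cross terms within each block vanish because deviations from the block mean sum to zero, and the residual offsets $\mu_A - E^{\mathrm{MC}}_{\ttti,\ell}[\bv_\ell] = -(|B|/M_{\ttti,\ell}^{\mathrm{opt}})\,\delta_{\ttti,\ell}[\bv_\ell]$ and $\mu_B - E^{\mathrm{MC}}_{\ttti,\ell}[\bv_\ell] = (|A|/M_{\ttti,\ell}^{\mathrm{opt}})\,\delta_{\ttti,\ell}[\bv_\ell]$ combine to give $(|A|\,|B|/M_{\ttti,\ell}^{\mathrm{opt}})\|\delta_{\ttti,\ell}[\bv_\ell]\|_V^2 = (M_{\ttti-1,\ell}\,\Delta M_{\ttti,\ell}/M_{\ttti,\ell}^{\mathrm{opt}})\|\delta_{\ttti,\ell}[\bv_\ell]\|_V^2$, which is exactly the correction term in the claim.

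The only genuine subtlety worth flagging is that the formula in~\cite{pebay2016numerically} is stated for scalar-valued data, whereas here $\bv_\ell^{(m)}$ takes values in the Hilbert space $V$; the expansion therefore needs the inner product $\sprod{\cdot,\cdot}_V$ rather than ordinary multiplication. This is harmless—every step above uses only bilinearity and the defining property of the mean—so the main obstacle is essentially bookkeeping rather than any conceptual difficulty. The cleanest route is to carry out the two-block expansion directly in the $V$-inner product rather than invoking the scalar proposition verbatim, thereby making the vector-valued generalization self-evident.
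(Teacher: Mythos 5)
Your proposal is correct and follows essentially the same route as the paper's own proof: both split the accumulated sum over the old and new sample blocks, rewrite each term as a within-block deviation plus a multiple of $\delta_{\ttti,\ell}[\bv_\ell]$, expand the squared $V$-norm so that the cross terms vanish by the defining property of the block means, and collect the weights $M_{\ttti-1,\ell}\Delta M_{\ttti,\ell}/M_{\ttti,\ell}^{\mathrm{opt}}$. Your explicit remark that the scalar formula of P\'ebay carries over verbatim to the Hilbert-space setting via the inner product is a point the paper leaves implicit, but it does not constitute a different argument.
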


\begin{proof}
    The second order sum in~\eqref{eq:sampling-error-estimator} can be separated into
    \begin{align*}
        z_{\ttti, \ell}^2
        &= \hspace{-1.3mm} \sum_{m=1}^{M_{\ttti-1,\ell}} \norm{\bv_{\ell}^{(m)}
            - \tfrac{M_{\ttti-1, \ell}}{M_{\ttti, \ell}^{\mathrm{opt}}} E^{\mathrm{MC}}_{{\ttti-1, \ell}}[\bv_{\ell}]
            - \tfrac{\Delta M_{\ttti, \ell}}{M_{\ttti, \ell}^{\mathrm{opt}}}\Delta E^{\mathrm{MC}}_{\ttti, \ell}[\bv_{\ell}] }_V^2 \\
        &\quad+ \sum_{m=M_{\ttti-1,\ell}+1}^{M_{\ttti, \ell}^{\mathrm{opt}}} \norm{\bv_{\ell}^{(m)}
            - \tfrac{M_{\ttti-1, \ell}}{M_{\ttti, \ell}^{\mathrm{opt}}} E^{\mathrm{MC}}_{{\ttti-1, \ell}}[\bv_{\ell}]
            - \tfrac{\Delta M_{\ttti, \ell}}{M_{\ttti, \ell}^{\mathrm{opt}}}\Delta E^{\mathrm{MC}}_{\ttti, \ell}[\bv_{\ell}] }_V^2. \\
    \end{align*}
    With $\Delta E^{\mathrm{MC}}_{\ttti, \ell}[\bv_\ell] = \delta_{\ttti,\ell}[\bv_\ell] + E^{\mathrm{MC}}_{{\ttti-1, \ell}}[\bv_\ell]$
    and $E^{\mathrm{MC}}_{{\ttti-1, \ell}}[\bv_\ell] = \Delta E^{\mathrm{MC}}_{\ttti, \ell}[\bv_\ell] - \delta_{\ttti,\ell}[\bv_\ell]$, it is:
    \begin{align*}
        z_{\ttti, \ell}^2
        &= \hspace{-1.3mm} \sum_{m=1}^{M_{\ttti-1,\ell}} \norm{\bv_{\ell}^{(m)} - E^{\mathrm{MC}}_{{\ttti-1, \ell}}[\bv_{\ell}] - \tfrac{\Delta M_{\ttti, \ell}}{M_{\ttti, \ell}^{\mathrm{opt}}} \delta_{\ttti,\ell}[\bv_\ell]}_V^2 \\
        &\quad+ \sum_{m=M_{\ttti-1,\ell}+1}^{M_{\ttti, \ell}^{\mathrm{opt}}} \norm{\bv_{\ell}^{(m)} - \Delta E^{\mathrm{MC}}_{{\ttti, \ell}}[\bv_\ell] + \tfrac{M_{\ttti - 1, \ell}}{M_{\ttti, \ell}^{\mathrm{opt}}} \delta_{\ttti,\ell}[\bv_\ell] }_V^2 \\
    \end{align*}
    The binomial theorem gives
    \begin{align*}
        z_{\ttti, \ell}^2
        &= \hspace{-1.3mm} \sum_{m=1}^{M_{\ttti-1,\ell}} \norm{\bv_{\ell}^{(m)} - E^{\mathrm{MC}}_{{\ttti-1, \ell}}[\bv_{\ell}]}_V^2 + \norm{\tfrac{\Delta M_{\ttti, \ell}}{M_{\ttti, \ell}^{\mathrm{opt}}} \delta_{\ttti,\ell}[\bv_\ell]}_V^2 \\
        &\quad+ \sum_{m=M_{\ttti-1,\ell}+1}^{M_{\ttti, \ell}^{\mathrm{opt}}} \norm{\bv_{\ell}^{(m)} - \Delta E^{\mathrm{MC}}_{{\ttti, \ell}}[\bv_\ell]}_V^2 + \norm{\tfrac{M_{\ttti - 1, \ell}}{M_{\ttti, \ell}^{\mathrm{opt}}} \delta_{\ttti,\ell}[\bv_\ell]}_V^2 \\[1mm]
        &= z_{\ttti-1, \ell}^2 + \Delta z_{\ttti, \ell}^2 + \tfrac{M_{\ttti - 1, \ell} \Delta M_{\ttti, \ell}}{M_{\ttti, \ell}^{\mathrm{opt}}} \norm{\delta_{\ttti,\ell}[\bv_\ell]}_V^2,
    \end{align*}
    where the mixed terms
    cancel out by expansion of the product and the definition of the estimators
    $E^{\mathrm{MC}}_{{\ttti-1, \ell}}[\bv_{\ell}]$ and $\Delta E^{\mathrm{MC}}_{{\ttti, \ell}}[\bv_\ell]$.
\end{proof}

Following the arguments in~\cite{baumgarten2024fully},
with this accumulative formula for $z_{\ttti, \ell}^2[\bv_\ell]$,
the algorithm satisfies Bellman's optimality principle,
i.e., each subproblem is solved optimally based on preexisting solutions.
Similar formulas are used for the other quantities in the data structures.

\begin{algorithm}
    \caption{BMLMC with MX-FEM}
    \label{alg:bmlmc}

    \vspace{-2mm}

    \begin{align*}
        &\texttt{data\_shared} = \tightset{\ttti \mapsto \tightset{
            \widehat{\err}_{\ttti}^{\mathrm{sam}},
            \widehat{\err}_{\ttti}^{\text{num}},
            \dots,
            \tightset{M_{\ttti, \ell},
                \widehat{\rC}_{\ttti, \ell}^{\text{CT}},
                \|{E^{\text{MC}}_{\ttti, \ell}[\bu_{\ell}]}\|_V,
                \dots}_{\ell=0}^{L_{\ttti}}}} \\
        &\texttt{data\_distr} = \tightset{
            E^{\text{MC}}_{\ttti, \ell}[\bu_\ell],
            E^{\text{MC}}_{\ttti, \ell}[\bv_\ell],
            V^{\text{MC}}_{\ttti, \ell}[\bu_\ell],
            V^{\text{MC}}_{\ttti, \ell}[\bv_\ell],
            \dots}_{\ell=0}^{L_{\ttti}} \\[1mm]
        &\texttt{function } \texttt{INIT}(\{M_{0, \ell}^{\text{init}}\}_{\ell=0}^{L_0}) \colon
        \hspace{1.3cm} \text{// Initial estimation round in call-stack}\\
        &\quad
        \begin{cases}
            \texttt{for } \ell = 0, \dots, {L_0} \colon
            &\hspace{1.2cm} \Delta \texttt{data\_distr}_{\ell} \leftarrow \texttt{MX-FEM}(M_{0, \ell}^{\text{init}}, \ell) \\
            \texttt{Welford}(\Delta \texttt{data\_distr})
            &\hspace{1.2cm} \texttt{return } \texttt{BMLMC}(\rT_{\rB} - \sum_{\ell=0}^{L_{0}} \rC_\ell, \, \eta \cdot \widehat{\err}_{0}^{\text{MSE}}) \\
        \end{cases} \\[1mm]
        &\texttt{function } \texttt{BMLMC}(\rT_{\ttti}, \epsilon_{\ttti}) \colon
        \hspace{1.95cm} \text{// Recursive minimization of error} \\
        &\quad
        \begin{cases}
            \texttt{if } \rT_{\ttti} < 0.05 \, \rT_{\rB} \colon
            &\hspace{0.22cm} \texttt{return } \widehat{\err}^{\text{MSE}}_{\ttti - 1} \\
            \texttt{if } \widehat{\err}^{\text{num}}_{\ttti - 1} \geq (1 - \theta) \, \epsilon_{\ttti}^2 \colon
            &\hspace{0.22cm} {L_{\ttti}} \leftarrow {L_{\ttti}} + 1 \\
            \texttt{if } \widehat{\err}^{\mathrm{sam}}_{\ttti - 1} \geq \theta \, \epsilon_{\ttti}^2 \colon
            &\hspace{0.22cm} \{\Delta M_{\ttti, \ell}\}_{\ell=0}^{L_{\ttti}}
            \leftarrow \{\max\{{M}_{\ttti, \ell}^{\text{opt}} - M_{\ttti - 1, \ell}, \, 0\}\}_{\ell=0}^{L_{\ttti}} \\
            \texttt{if } \sum_{\ell=0}^{L_{\ttti}} \Delta M_{\ttti, \ell} \widehat{\rC}^{\text{CT}}_{\ttti - 1, \ell} > \rT_{\ttti} \colon
            &\hspace{0.22cm} \texttt{return } \texttt{BMLMC}(\rT_{\ttti}, 0.5 {(\epsilon_{\ttti} + \epsilon_{\ttti - 1})}) \\
            \texttt{if } \widehat{c}_{\mathrm{Mem}} \cdot \abs{\cK_{0, L_{\ttti}}} > {\mathrm{Mem}}_{\rB} \colon
            &\hspace{0.22cm} \texttt{return } \widehat{\err}^{\text{MSE}}_{\ttti - 1} \\
            \texttt{for } \ell = 0, \dots, {L_{\ttti}} \colon
            &\hspace{0.22cm} \Delta \texttt{data\_distr}_{\ell} \leftarrow \texttt{MX-FEM}(\Delta M_{\ttti, \ell}, \ell) \\
            \texttt{if Welford}(\Delta \texttt{data\_distr}) \colon
            &\hspace{0.22cm} \texttt{return } \texttt{BMLMC}(\rT_{\ttti} - \sum_{\ell=0}^{L_{\ttti}} \rC_\ell, \, \eta \cdot \epsilon_{\ttti}) \\
            \texttt{else} \colon
            &\hspace{0.22cm} \texttt{return } \texttt{BMLMC}(\rT_{\ttti}, \, \eta \cdot \epsilon_{\ttti})
        \end{cases} \\[1mm]
        &\texttt{function Welford} (\Delta \texttt{data\_distr}) \colon
        \hspace{0.35cm} \text{// Update if new data is available} \\
        &\quad
        \begin{cases}
            \texttt{if } (\Delta \texttt{data\_distr} = \emptyset) \colon
            &\hspace{-5.23cm}\texttt{return false} \\[1.5mm]
            \texttt{for } \ell = 0, \dots, {L_{\ttti}} \colon
            &\hspace{-5.23cm}\text{// Accumulation of statistics on each level} \\[1mm]
            \quad
            \begin{cases}
                \hspace{2mm} M_{\ttti, \ell}^{\text{opt}}
                &\leftarrow \hspace{1mm}
                M_{\ttti-1, \ell} + \Delta M_{\ttti, \ell} \\
                \hspace{1.1mm} \delta_{{\ttti, \ell}}[\bu_\ell]
                &\leftarrow \hspace{1mm}
                \Delta E^{\mathrm{MC}}_{\ttti, \ell}[\bu_\ell] - E^{\mathrm{MC}}_{{\ttti-1, \ell}}[\bu_\ell] \\
                \hspace{1.1mm} z_{\ttti, \ell}^2[\bu_\ell]
                &\leftarrow \hspace{1mm} z_{\ttti-1, \ell}^2[\bu_\ell] + \Delta z_{\ttti, \ell}^2[\bu_\ell]
                + \frac{M_{\ttti - 1, \ell} \Delta M_{\ttti, \ell}}{M_{\ttti, \ell}^{\mathrm{opt}}} \norm{\delta_{\ttti,\ell}[\bu_\ell]}_V^2 \\[-1mm]
                E^{\text{MC}}_{\ttti, \ell}[\bu_\ell]
                &\leftarrow \hspace{1mm}
                E^{\text{MC}}_{\ttti - 1, \ell}[\bu_\ell] + \tfrac{\Delta M_{\ttti, \ell}}{M_{\ttti, \ell}^{\text{opt}}} \delta_{{\ttti, \ell}}[\bu_\ell] \\
                \hspace{0.7mm} S_{\ttti, \ell}^2[\bu_\ell]
                &\leftarrow \hspace{1mm}
                S_{{\ttti - 1, \ell}}^2[\bu_\ell] + \Delta S^2_{\ttti, \ell}[\bu_\ell]
                + \tfrac{M_{\ttti-1, \ell} \Delta M_{\ttti, \ell}}{M_{\ttti, \ell}^{\text{opt}}} \delta_{\ttti, \ell}^2[\bu_\ell] \\
                V^{\text{MC}}_{\ttti, \ell}[\bu_\ell]
                &\leftarrow \hspace{1mm}
                (M_{\ttti, \ell}^{\text{opt}} - 1)^{-1} S_{\ttti, \ell}^2[\bu_\ell]  \\
                &\,\,\,\vdots
            \end{cases} \\
            \texttt{evaluate}~\eqref{eq:squared-bias-estimate-mlmc},~\eqref{eq:alpha-fit},~\eqref{eq:sampling-error-estimator},~\dots
            &\hspace{-5.23cm}\texttt{return true}
        \end{cases} \\[1mm]
        &\texttt{function MX-FEM} (\Delta M_{\ttti, \ell}, \ell) \colon
        \hspace{1.35cm} \text{// Specific example in \Cref{alg:mx-fem}}\\
        &\quad
        \begin{cases}
            \hspace{1.47cm} s \hspace{1.47cm} \leftarrow \lceil\log_2 (\min \{\abs{\cP}, \, \Delta M_{\ttti, \ell}, \, M^{\mathrm{Mem}}_{\ttti, \ell} \})\rceil \\[1mm]
            (M_{s, \ell}, \, \abs{\cK_{s, \ell}} \, \Delta M_{s, \ell}) \leftarrow
            (2^s, \, \abs{\cK_{0, 0}} 2^{\ell \cdot d + s}, \, \lceil 2^{-s} \Delta M_{\ttti, \ell} \rceil) \\[1mm]
            \texttt{for } m = 1, \dots, \Delta M_{s, \ell} \colon
            \hspace{1.45cm} \text{// Sequential division of work if necessary} \\
            \quad
            \begin{cases}
                \texttt{run on } (M_{s,\ell}, \cK_{s, \ell}) \texttt{ in parallel} \colon \\[1mm]
                \quad
                \begin{cases}
                    \by^{(m)}_{\ell} &\leftarrow [\hspace{7.13mm}]
                    \begin{cases}
                        \vspace{1mm} \qquad \qquad \quad \, \vdots \vspace{1mm}
                    \end{cases} \\[3mm]
                    \bu_\ell^{(m)} &\leftarrow [\by^{(m)}_{\ell}]
                    \begin{cases}
                        \texttt{Find } \bu_\ell^{(m)} \in V_{\ell}(\cK_{{s, \ell}}^{(m)}) \texttt{ s.t.:} \\
                        \quad \cG_\ell [\by_{\ell}^{(m)}] \,\, (\bu_\ell^{(m)}) = \bb_\ell
                    \end{cases}
                \end{cases}
            \end{cases} \\
            \texttt{postprocess and return } \Delta \texttt{data\_distr}_{\ell}
        \end{cases}
    \end{align*}
\end{algorithm}

    \section{Multi-X FEM}
\label{sec:multi-x-finite-elelmnt-method}

In this section, we expand the ideas introduced in~\cite{baumgarten2023fully, baumgarten2024fully},
to generate comprehensive spatial domain estimates to multi-\textit{PDE} simulations
while adhering to a memory constraint.
For context, we briefly revisit the formulation of a
multi-\textit{sample} FE space in~\Cref{subsec:multi-sample-finite-element-space}.
Subsequently, we illustrate our approach of a sparse multi-\textit{index} FE update algorithm
through an introductory example
and by giving an analysis of the method's memory consumption in~\Cref{lemma:memory-consumption}.
This in particular gives justification for constraint~\eqref{eq:knapsack-mlmc-mem}, i.e.,
that the maximal memory footprint can be controlled
by the amount of FE cells on the highest level $L$.
The following \Cref{corollary:bmlmc-corollary} provides
abound to~\cite[Prop.~2.5]{baumgarten2024fully}.
We finish in~\Cref{subsec:multi-pde-finite-element-simulations}
by outlining our approach to multi-\textit{PDE} FE simulations,
and by summarizing all concepts under the name
‘Multi-X Finite Element Method’ (or MX-FEM),
where the \textit{X} stands for \textit{sample}, \textit{index} and \textit{PDE}.
The authors of~\cite{beck2020hp, krais2021flexi} use a similar naming concept
for their work, although the term multi-X isn't mentioned there.

\subsection{Multi-Sample FE Space}
\label{subsec:multi-sample-finite-element-space}

A multi-sample FE space is used to approximate multiple samples
$(\bu_{s, \ell})_{m=1}^{M_{s, \ell}} \in V_{s, \ell}(\cK_{s, \ell})$
of a PDE solution on a parallel cluster.
To explain how many samples $M_{s, \ell}$ and
 FE cells $\cK_{s, \ell}$ are used to construct the space
the estimation round $\ttti$ is kept fixed in this section.
We replace the index $\ttti$ by a new index $s \in \NN_0$,
which represents the number of bisections of
the set of processing units $\cP$ and is
referred to as the communication split.
For simplicity, $\abs{\cP}$ is assumed to be a power of two
to maintain equal size of the subsets after several splits.

As usual for parallel FE methods, the spatial domain $\cD \subset \RR^d$ is decomposed into
finitely many cells $\cK_{s, \ell}$ and distributed over the processing units $\cP$.
This is combined with a parallelization over $M_{s, \ell} \coloneqq 2^s$ samples by
grouping the processing units into disjoint subsets, index by $m=1,\dots,M_{s, \ell}$,
each of size $|\cP_{s, \ell}^{(m)}| = 2^{-s} |\cP|$.
The communication split $s \in \NN_0$ is determined at the beginning of each \texttt{MX-FEM}
call in~\Cref{alg:bmlmc} by the formula (see also~\cite{baumgarten2024fully})
\begin{equation}
    s = \ceil{\log_2 \roundlr{\min \set{\abs{\cP}, \Delta M_{\ttti, \ell}, M_{\ttti, \ell}^{\mathrm{Mem}}}}}.
    \label{eq:comm-split-formula}
\end{equation}
An example of how the optimal communication split $s \in \NN_0$ is determined
is given in~\Cref{subsec:sparse-multi-index-finite-element-update-algorithm}.
For now, $M_{\ttti, \ell}^{\mathrm{Mem}}$ is assumed to be sufficiently large
such that the minimum in~\eqref{eq:comm-split-formula} is never taken by it.
As a result of~\eqref{eq:comm-split-formula}, the choice of $s$ minimizes
the amount of communication in every estimation round while maximizing the
use of the given computational resources.

If $s=0$, the domain $\cD$ is distributed over all processing units
without sample parallelization, i.e, $M_{s, \ell} = 1$.
If $s=\log_2 \abs{\cP}$ then $M_{s, \ell} = \abs{\cP}$, i.e.,
all processing units are distributed over the samples,
each assigned to its own complete domain.

If the minimum in~\eqref{eq:comm-split-formula} does not take the value
$\Delta M_{\ttti, \ell}$\,, common on lower levels,
we additionally split up the
work into $\Delta M_{s, \ell}$ sequentially computed samples such that
\begin{equation}
    \label{eq:sample-amount}
    \Delta M_{s, \ell} = \ceil{\frac{\Delta M_{\ttti, \ell}}{M_{s, \ell}}} = \ceil{2^{-s} \Delta M_{\ttti, \ell}}\,.
\end{equation}

Furthermore, the set of all cells $\cK_{s, \ell}$ is given by the union
\begin{align*}
    \cK_{s, \ell} =
    \bigcup_{m=1, \dots, M_{s, \ell}^{\phantom{(m)}}}
    \cK_{s, \ell}^{(m)} \,\,\, =
    \bigcup_{m=1, \dots, M_{s, \ell}^{\phantom{(m)}}}
    \bigcup_{p \in \cP^{(m)}_{s, \ell}} \,\,
    \bigcup_{K \in \cK_{s, \ell}^{{(m, p)}}} K
    \,,
\end{align*}
with $K$ as open subsets of $\cD$ where $K \cap K' = \emptyset$ for $K \neq K'$.
Here, $\cK_{s, \ell}^{{(m, p)}}$ is the set of cells on the $p$-th processing unit
and of the $m$-th sample, and $\cP^{(m)}_{s, \ell}$ is the set of processing units working on the $m$-th sample.
As a result, the total number of FE cells on the multi-index $(s, \ell)$ can be expressed with respect to
the number of FE cells of a single sample on the lowest level $\ell = 0$ by
\begin{equation}
    \label{eq:cell-amount}
    \abs{\cK_{s, \ell}} = \abs{\cK_{0, 0}} \cdot 2^{\ell \cdot d + s}.
\end{equation}
With $\cK_{s, \ell}$ and $M_{s, \ell}$ defined we can now
construct a multi-sample FE space
\begin{align*}
    V_{s, \ell}(\cK_{s, \ell}) = V_{s, \ell}(\cK_{s, \ell}^{(1)}) \times \dots \times V_{s, \ell}(\cK_{s, \ell}^{(M_{s,\ell})})
    = \prod_{m=1}^{M_{s,\ell}} V_{s, \ell}(\cK_{s, \ell}^{(m)}),
\end{align*}
where $V_{s, \ell}(\cK_{s, \ell}^{(m)}) \coloneqq \tset{\bv_\ell \in V_{\ell} \colon \, \bv_\ell |_K \in V_{\ell,K}, \, \forall K \in \cK_{s, \ell}^{(m)}, \, \cP_{s, \ell}^{(m)} \subset \cP}$
is a FE space for a single sample, defined on the cells in $\cK_{s, \ell}^{(m)}$
and $V_{\ell,K}$ is an arbitrary local FE space.
Hence, with the coefficients
$\bmu = (\bmu_1^{(1)}, \dots, \bmu_{N_{\ell}^{h}}^{(1)}, \dots, \bmu_1^{(M_{s,\ell})}, \dots,
\bmu_{N_{\ell}^{h}}^{(M_{s, \ell})})^\top \in \RR^{M_{s, \ell} \cdot N_{\ell}^{h}}$
a multi-sample FE solution is represented by
\begin{align*}
{(\bu_{s, \ell})}
    _{m=1}^{M_{s, \ell}}
    = \Bigg(\sum_{n=1}^{N_{\ell}^{h}} \bmu_n^{(m)} \bpsi_n^{(m)} \Bigg)_{m=1}^{M_{s, \ell}} \in V_{s, \ell} \coloneqq V_{s, \ell}(\cK_{s, \ell}) \,,
\end{align*}
where $\bpsi_n^{(m)}$ are basis functions of the global FE space
of dimension $N_{\ell}^{h}$.
In this paper, we will employ continuous Lagrange elements,
Raviart-Thomas elements and discontinuous Galerkin (dG) elements all defined
on the same set of cells $\cK_{s, \ell}$.

This formulation is implemented in~\cite{wieners2024mpp341},
where the parallelization over the
samples is realized on the coefficient vector of the FEM.
This enables the highly adaptive parallelization scheme
needed in the BMLMC method.

\subsection{Multi-Index FE Update}
\label{subsec:sparse-multi-index-finite-element-update-algorithm}

We first provide an example to give some intuition on the construction of the
multi-sample FE space and how this is incorporated into the multi-index FE update
algorithm for our multi-level Monte Carlo method.

\paragraph{Exemplary Estimation Round}

\begin{figure}[t]
    \centering
    \includegraphics[width=0.8\textwidth]{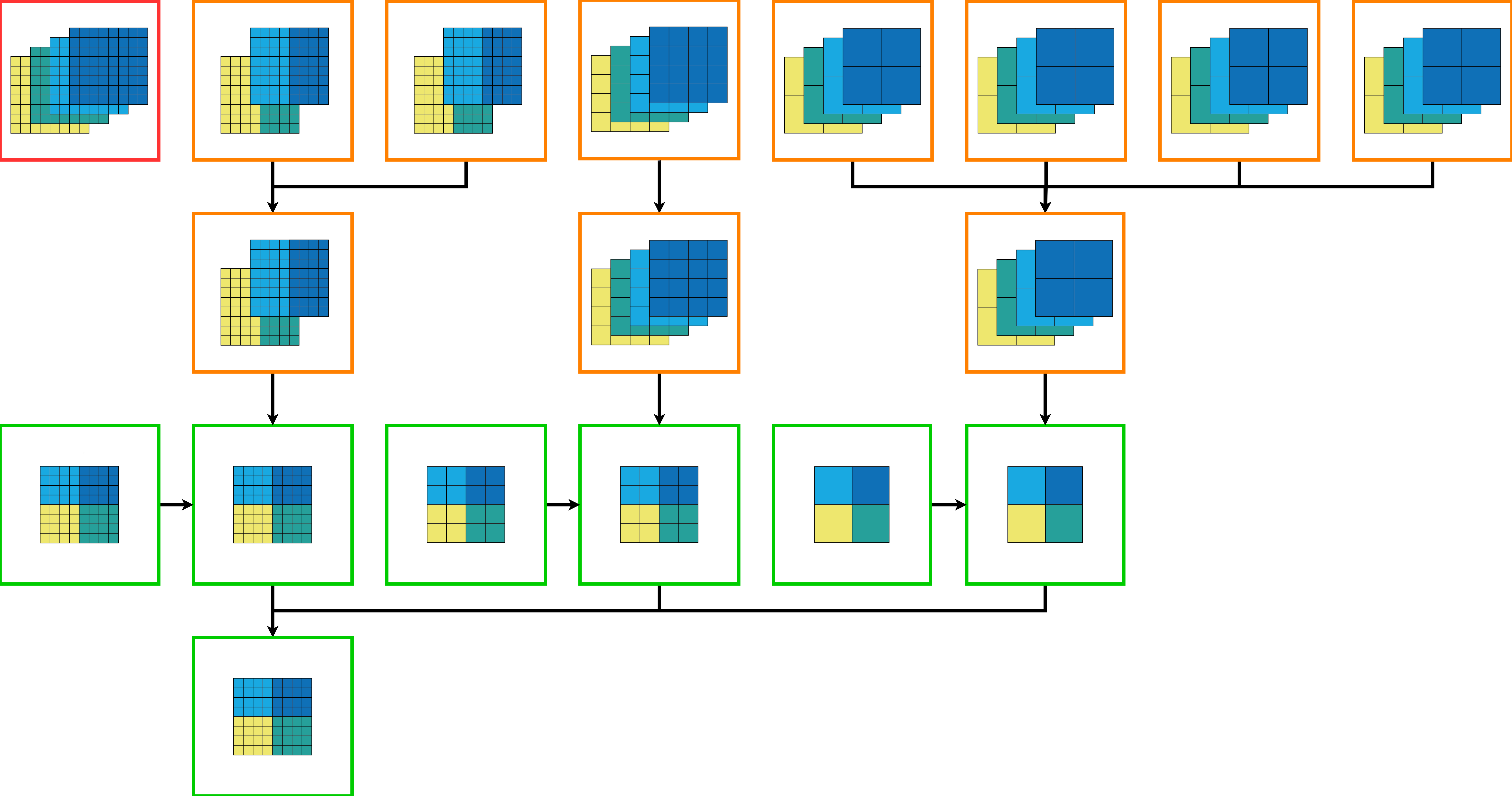}
    \caption{Exemplary estimation round of the full domain update algorithm.
    Meshes are either held permanently in memory (green), temporarily
    created (orange) or not initilized due to a high memory demand (red).
    Each color represents one of four processing units.
    The black arrows represent the flow of data within the algorithm.}
    \label{fig:full-solution-update-example}
\end{figure}

At the beginning of estimation round $\ttti$,
with data from the prior estimation round accessible,
multiple samples across three levels need to be computed.
Specifically, in this example, 
$\Delta M_{\ttti, 2} = 3$, $\Delta M_{\ttti, 1} = 4$ and $\Delta
M_{\ttti, 0} = 16$ with $\abs{\cK_{0, 2}} = 64$, $\abs{\cK_{0, 1}} =
16$ and $\abs{\cK_{0, 0}} = 4$ cells on levels $\ell=2, 1$ and $0$, respectively. 
We assume that these computations do not exceed the computational time
budget.

We refer to~\Cref{fig:full-solution-update-example} for a visual
representation to explain the update algorithm, in this example operating
on four processing units $\abs{\cP} = 4$,
each identifiable by a unique color.
The colored frames indicate whether the associated data structure is
permanently (green), temporarily (orange), or never (red) allocated.
The black arrows represent the flow of data and the order of state updates
within the algorithm.
We explain~\Cref{fig:full-solution-update-example} from  bottom to top,
starting with the result and walking back along the data updates.
The mesh on the bottom represents the solution
estimates on the entire spatial domain discretized on level $\ell = 2$.
This mesh is exclusively distributed across the spatial domain, i.e., $s=0$,
using all $\abs{\cP} = 4$ processing units.

The computation of this solution is described by equation~\eqref{eq:mlmc-estimator-full-solution},
which entails summing across all levels and appropriately
prolonging the solution to the FE space on level $\ell = 2$.
Consequently, the penultimate row in~\Cref{fig:full-solution-update-example}
visually represents the cumulative data from all estimators on all levels.
Crucially, all meshes are exclusively distributed across the spatial
domain to conserve memory and allow for permanent storage of the state.
Additionally, it is essential to note that the outcomes from the preceding
estimation round $\ttti - 1$ need to be incorporated
with the current estimation.
This is depicted by the horizontal black arrows in the penultimate row,
representing the state update achieved by combining computations at each level.

The second row from the top in~\Cref{fig:full-solution-update-example}
signifies an intermediate state where all computations from the current
estimation round merge in a data structure that prioritizes sample parallelization.
This is where~\eqref{eq:comm-split-formula} becomes relevant:
\begin{align*}
    \begin{array}{@{}l@{\,}l@{\,\,\,}l@{\,\,\,}l@{\,\,\,}l@{\,\,\,}l@{\,\,\,}l@{\,}l@{}}
        \ell = 0 \colon & M_{\ttti, 0}^{\mathrm{Mem}} = 32 & \Rightarrow & s = \ceil{\log_2 \roundlr{\min \set{4, 16, 32}}} & = 2
        & \Rightarrow & M_{s,\ell} = 4, & \abs{\cK_{s, \ell}} = 16,    \\
        \ell = 1 \colon & M_{\ttti, 1}^{\mathrm{Mem}} = 8 & \Rightarrow & s = \ceil{\log_2 \roundlr{\min \set{4, 4, 8}}} & = 2
        & \Rightarrow & M_{s,\ell} = 4, & \abs{\cK_{s, \ell}} = 64 ,   \\
        \ell = 2 \colon & M_{\ttti, 2}^{\mathrm{Mem}} = 2 & \Rightarrow & s = \ceil{\log_2 \roundlr{\min \set{4, 3, 2}}} & = 1
        & \Rightarrow & M_{s,\ell} = 2, & \abs{\cK_{s, \ell}} = 128.
    \end{array}
\end{align*}
Here, we have additionally assumed that the maximum number of storable samples is
$M_{\ttti, 0}^{\mathrm{Mem}}=32$, $M_{\ttti, 1}^{\mathrm{Mem}}=8$ and
$M_{\ttti, 2}^{\mathrm{Mem}}=2$, respectively.
Details on a way how to estimate these sample numbers are given below.

While this data structure reduces communication optimally,
it necessitates significantly more memory for storage.
Consequently, it is allocated only temporarily,
transitioning to domain-distributed meshes at the end of each estimation round.
Thus, the transitioning involves allocating responsibilities
for distinct subdomains to individual processing units.
A single processing unit then collects results from all other units
for its designated subdomain, while transmitting information
about the remaining domain to other processors.

Lastly, we explain the top row of~\Cref{fig:full-solution-update-example},
starting on the right with the meshes at level $\ell=0$.
Given the objective of computing $\Delta M_{\ttti, 0} = 16$ samples on this specific level,
and considering that the construction of the meshes prioritizes
distribution across samples with $s=2$ rather than the spatial domain,
we compute $\Delta M_{2,0} = 4$ batches sequentially with $M_{2, 0} = 4$
parallelized samples by following~\eqref{eq:sample-amount}.
Upon completion of each batch, the data is accumulated
before transitioning to domain-distributed meshes.
In the middle of the row,
we depict the computation of $\Delta M_{\ttti, 1}=4$ samples on level $\ell=1$.
Once more, to optimize communication efficiency, the computation is divided,
assigning each processing unit the responsibility of handling one sample.
Finally, on the far left of the row,
we illustrate the computation of $\Delta M_{\ttti, 2}=3$ samples.
In contrast to the other levels, the computation is not straightforwardly
divided across samples due to memory constraints.
In this example, the mesh on level $\ell=2$ cannot be held four times
in memory (cf.~the red frame in~\Cref{fig:full-solution-update-example}).
Consequently, with $s=1$, we only compute $M_{1, 2} = 2$ in parallel while
using two processing units for each sample.
Since $\Delta M_{\ttti, 2}=3$ is requested,
we split the computation into two sequential batches $\Delta M_{s, \ell}=2$ to uphold~\eqref{eq:sample-amount}.

In conclusion, the multi-index pair $(s, \ell)$ determines the memory usage and
communication efficiency of the algorithm.
The way the algorithm is constructed, leads to a sparse index set of $(s, \ell)$ pairs,
since higher levels $\ell$ lead by~\eqref{eq:assumption-beta-v} to fewer samples and therefore,
by~\eqref{eq:comm-split-formula} to smaller communication splits $s$ and vise versa.
We refer to~\Cref{sec:numerical-experiments} for illustrations of this sparse index set.

\paragraph{Memory Analysis} The following paragraph is concerned
with the maximal memory footprint $\rC_{\max}^{\mathrm{Mem}}$
of the sparse multi-index FE update algorithm explained via
the example above.
To this end, let $\rC^{\mathrm{Mem}}_{s, \ell}$ represent the memory usage needed to store
one multi-sample FE solution in megabytes.
This memory usage is proportional to the number of
FE cells used in the multi-sample FE space of~\Cref{subsec:multi-sample-finite-element-space}.
Therefore, we reformulate assumption~\eqref{eq:assumption-gamma-mem} with respect to the number of FE
cells, considering the communication split $s$ and inserting~\eqref{eq:cell-amount},
which yields
\begin{equation}
    \label{eq:memory-usage-estimatior}
    {\rC}^{\mathrm{Mem}}_{s, \ell} \leq {c}_{\mathrm{Mem}} {\abs{\cK_{s, \ell}}} =
    c_{\mathrm{Mem}} \abs{\cK_{0, 0}} \cdot 2^{\ell \cdot d + s}.
\end{equation}

Furthermore, we determine the maximal number of samples
that can be held in memory on every level $\ell$ and
within the current estimation round $\ttti$ by
\begin{equation}
    \label{eq:maximal-samples-by-memory}
    M_{\ttti, \ell}^{\mathrm{Mem}} =
    \floor{
        \frac{{\mathrm{Mem}}_{\rB} - \widehat{\rC}_{\max}^{\mathrm{Mem}}}{{\widehat{\rC}}^{\mathrm{Mem}}_{0, \ell}}
    }.
\end{equation}
This quantity is computed as soon as memory estimates
$\widehat{\rC}^{\mathrm{Mem}}_{0, \ell}$ and measurements of the maximal memory
usage $\widehat{\rC}_{\max}^{\mathrm{Mem}}$ become available.
Thus, using formula~\eqref{eq:comm-split-formula} the total memory
footprint ${\rC}_{\max}^{\mathrm{Mem}}$ can be controlled:

\begin{lemma}[Maximal memory footprint]
  \label{lemma:memory-consumption}
  Suppose enough computing time is available to reach the memory limit
  and \eqref{eq:memory-usage-estimatior} holds with a constant
  $c_{\mathrm{Mem}}$ independent of $\ell$ and $s$. Suppose the
  maximal number of samples $M^{\mathrm{Mem}}_{\ttti, \ell}$ that can be held in memory is
  given through~\eqref{eq:maximal-samples-by-memory},
  estimated using data from previous rounds.
  Then, the maximal memory footprint of the overall algorithm
    is bounded by
    \begin{align*}
        \rC_{\max}^{{\mathrm{Mem}}} < \widehat{c}_{\mathrm{Mem}} \abs{\cK_{0, L}}.
    \end{align*}
\end{lemma}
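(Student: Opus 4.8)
The plan is to bound $\rC_{\max}^{\mathrm{Mem}}$ by splitting the state held at any instant into a \emph{permanent} part (the domain-distributed, $s=0$ estimates kept on every level) and a \emph{transient} part (the single multi-sample FE space allocated for the level currently being processed), estimating each separately and then matching the sum against the finest-level cell count $\abs{\cK_{0,L}}$ via the termination condition of \Cref{alg:bmlmc}. First I would treat the permanent part. At any time the algorithm keeps a fixed number $N$ of spatial fields per level — $E^{\mathrm{MC}}_{\ttti,\ell}[\bu_\ell]$, $V^{\mathrm{MC}}_{\ttti,\ell}[\bu_\ell]$, and so on — each domain-distributed, so by \eqref{eq:memory-usage-estimatior} with $s=0$ and the geometric summation
\begin{align*}
    N \sum_{\ell=0}^{L} c_{\mathrm{Mem}} \abs{\cK_{0,\ell}}
    = N\, c_{\mathrm{Mem}} \abs{\cK_{0,0}} \sum_{\ell=0}^{L} 2^{\ell d}
    \leq N\, c_{\mathrm{Mem}} \frac{2^d}{2^d-1}\, \abs{\cK_{0,L}},
\end{align*}
the permanent footprint is already $O(\abs{\cK_{0,L}})$, since the sum $\sum_\ell 2^{\ell d}$ is dominated by its top term $\abs{\cK_{0,L}} = \abs{\cK_{0,0}} 2^{Ld}$. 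This is the quantity tracked by $\widehat{\rC}_{\max}^{\mathrm{Mem}}$.

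Next I would bound the transient working space on the level $\ell$ under computation. By \eqref{eq:memory-usage-estimatior} and \eqref{eq:cell-amount} it costs $\rC^{\mathrm{Mem}}_{s,\ell} = 2^s\,\widehat{\rC}^{\mathrm{Mem}}_{0,\ell}$. The split formula \eqref{eq:comm-split-formula} makes $2^s$ the smallest power of two above the minimum, and since $M^{\mathrm{Mem}}_{\ttti,\ell}$ sits inside that minimum we have $2^s \leq 2\,M^{\mathrm{Mem}}_{\ttti,\ell}$; combining with the definition \eqref{eq:maximal-samples-by-memory} gives
\begin{align*}
    \rC^{\mathrm{Mem}}_{s,\ell}
    = 2^s\, \widehat{\rC}^{\mathrm{Mem}}_{0,\ell}
    \leq 2\, M^{\mathrm{Mem}}_{\ttti,\ell}\, \widehat{\rC}^{\mathrm{Mem}}_{0,\ell}
    \leq 2\,\big( {\mathrm{Mem}}_{\rB} - \widehat{\rC}_{\max}^{\mathrm{Mem}} \big).
\end{align*}
Adding the permanent and transient contributions, the instantaneous footprint is at most $\widehat{\rC}_{\max}^{\mathrm{Mem}} + 2({\mathrm{Mem}}_{\rB} - \widehat{\rC}_{\max}^{\mathrm{Mem}}) \leq 2\,{\mathrm{Mem}}_{\rB}$. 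This is exactly the point of the memory-aware choice of $s$: the split guarantees that the transient allocation never overflows the budget left after the permanent state.

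Finally I would convert the budget bound into the claimed $\abs{\cK_{0,L}}$ bound using the hypothesis that enough CPU time is available to reach the memory limit. Then \Cref{alg:bmlmc} must terminate through the check $\widehat{c}_{\mathrm{Mem}} \abs{\cK_{0,L_{\ttti}}} > {\mathrm{Mem}}_{\rB}$, so $L$ is the largest level with $\widehat{c}_{\mathrm{Mem}}\abs{\cK_{0,L}} \leq {\mathrm{Mem}}_{\rB}$, which forces the two-sided sandwich $\widehat{c}_{\mathrm{Mem}}\abs{\cK_{0,L}} \leq {\mathrm{Mem}}_{\rB} < \widehat{c}_{\mathrm{Mem}}\abs{\cK_{0,L+1}} = 2^d\,\widehat{c}_{\mathrm{Mem}}\abs{\cK_{0,L}}$. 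Substituting the upper bound on ${\mathrm{Mem}}_{\rB}$ into the estimate of the previous paragraph yields $\rC_{\max}^{\mathrm{Mem}} = O(\abs{\cK_{0,L}})$ with an explicit dimension-dependent constant, which I then absorb into the calibrated constant $\widehat{c}_{\mathrm{Mem}}$ to obtain $\rC_{\max}^{\mathrm{Mem}} < \widehat{c}_{\mathrm{Mem}}\abs{\cK_{0,L}}$.

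I expect the transient estimate to be the main obstacle: one must argue that only a single multi-sample space is live at the peak and that its cost, governed by the split $s$, stays commensurate with $\abs{\cK_{0,L}}$ rather than scaling with the full budget. This is precisely where the sparsity of the active $(s,\ell)$ index set is needed — by \eqref{eq:assumption-beta-v} fine levels carry few samples and hence $s\approx 0$, while large splits occur only on coarse levels where $\abs{\cK_{0,\ell}}$ is small, so that $\abs{\cK_{s,\ell}} = \abs{\cK_{0,0}}2^{\ell d + s}$ never exceeds $\abs{\cK_{0,L}}$ by more than a constant. A secondary subtlety is the rounding in \eqref{eq:comm-split-formula}: the ceiling can push $2^s$ slightly above $M^{\mathrm{Mem}}_{\ttti,\ell}$, so one must either invoke the headroom built into \eqref{eq:maximal-samples-by-memory} or carry the resulting factor of two through the final constant.
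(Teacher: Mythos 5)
Your proposal is correct in substance and shares its core with the paper's proof: both use the identical decomposition into a permanently allocated, domain-distributed part summed over levels (bounded by the geometric sum $\sum_{\ell=0}^{L} 2^{\ell d} \lesssim 2^{Ld}$, exactly as in your $\tfrac{2^d}{2^d-1}$ estimate) plus a single transient multi-sample allocation whose size is controlled because, by the floor in \eqref{eq:maximal-samples-by-memory}, $M^{\mathrm{Mem}}_{\ttti,\ell}\,\widehat{\rC}^{\mathrm{Mem}}_{0,\ell}$ never exceeds the leftover memory ${\mathrm{Mem}}_{\rB} - \widehat{\rC}_{\max}^{\mathrm{Mem}}$. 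Where you genuinely diverge is the endgame. The paper never passes through the budget: it takes $s=\log_2(M^{\mathrm{Mem}}_{\ttti,\ell})$ in \eqref{eq:comm-split-formula}, reduces to the largest level, obtains $\rC_{\max}^{\mathrm{Mem}} < c_{\mathrm{Mem}}\,(2 + M^{\mathrm{Mem}}_{\ttti,L})\,\abs{\cK_{0,L}}$, and then kills the transient term outright by observing that $M^{\mathrm{Mem}}_{\ttti,L} = 0$ at the moment the algorithm stops, so that $\widehat{c}_{\mathrm{Mem}} \coloneqq 2c_{\mathrm{Mem}}$ suffices with no dimension-dependent factor. You instead bound the instantaneous footprint by $2\,{\mathrm{Mem}}_{\rB}$ and convert via the termination check of \Cref{alg:bmlmc}, sandwiching $\widehat{c}_{\mathrm{Mem}}\abs{\cK_{0,L}} \leq {\mathrm{Mem}}_{\rB} < 2^d\,\widehat{c}_{\mathrm{Mem}}\abs{\cK_{0,L}}$. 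Your route buys a more honest over-the-whole-runtime statement (the paper's ``WLOG largest level'' plus ``$M^{\mathrm{Mem}}_{\ttti,L}=0$ when the algorithm stops'' quietly conflates mid-run and stopping-time states, which your leftover-memory bound handles uniformly), and your explicit treatment of the ceiling in \eqref{eq:comm-split-formula} --- $2^s \leq 2 M^{\mathrm{Mem}}_{\ttti,\ell}$ --- is more careful than the paper, which writes $s = \log_2(M^{\mathrm{Mem}}_{\ttti,\ell})$ as if the minimum were a power of two. The paper's route buys the clean constant.

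One wrinkle in your final step deserves attention: you cannot literally ``absorb'' the resulting factor $2^{d+1}$ into $\widehat{c}_{\mathrm{Mem}}$, because that same constant defines the termination check \eqref{eq:knapsack-mlmc-mem} on which your sandwich rests --- inflating it changes when the algorithm stops and hence the value of $L$, making the argument self-referential as stated. What your argument legitimately delivers is $\rC_{\max}^{\mathrm{Mem}} < 2^{d+1}\,\widehat{c}_{\mathrm{Mem}}\abs{\cK_{0,L}}$ with the constant from the check, i.e., the lemma's substance (memory scales like the finest deterministic computation) but not its exact constant. To recover the stated inequality with a single consistent $\widehat{c}_{\mathrm{Mem}}$, you should follow the paper and use the stopping condition $M^{\mathrm{Mem}}_{\ttti,L} = 0$ directly --- your own transient bound $\rC^{\mathrm{Mem}}_{s,\ell} \leq 2\,({\mathrm{Mem}}_{\rB} - \widehat{\rC}_{\max}^{\mathrm{Mem}})$ already contains everything needed for this, since at stopping the leftover memory is below $\widehat{\rC}^{\mathrm{Mem}}_{0,L} \sim c_{\mathrm{Mem}}\abs{\cK_{0,L}}$, which eliminates the detour through ${\mathrm{Mem}}_{\rB}$ and the factor $2^d$ altogether.
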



\begin{proof}
    Assuming that we are in a memory constraint setting, i.e., that the minimum in~\eqref{eq:comm-split-formula}
    is taken by $M_{\ttti, \ell}^{\mathrm{Mem}}$, given by~\eqref{eq:maximal-samples-by-memory}:
    \begin{align*}
        s = \log_2 (M_{\ttti, \ell}^{\mathrm{Mem}})
        \quad \text{with} \quad
        M_{\ttti, \ell}^{\mathrm{Mem}} =
        \floor{
            \tfrac{{\mathrm{Mem}}_{\rB} - \widehat{\rC}_{\max}^{\mathrm{Mem}}}{{\widehat{\rC}}^{\mathrm{Mem}}_{0, \ell}}
        }.
    \end{align*}
    If the algorithm runs long enough such that the measured left over memory
    $\mathrm{Mem}_{\rB} - \widehat{\rC}_{\max}^{\mathrm{Mem}}$ is smaller than the memory estimate
    $\widehat{\rC}^{\mathrm{Mem}}_{0, L}$ on the largest level without communication split,
    then $M_{\ttti, L}^{\mathrm{Mem}} = 0$ by~\eqref{eq:maximal-samples-by-memory} and the algorithm will stop.
    The goal is to bound the maximal memory footprint $\rC_{\max}^{\mathrm{Mem}}$ over the total run-time.
    This is done by
    \begin{align*}
        \rC_{\max}^{\mathrm{Mem}} &=
        \underbrace{\text{\fcolorbox{frameGreen}{white}{${\sum_{\ell=0}^L \rC^{\mathrm{Mem}}_{0, \ell}}$}}}_{\text{Permanently allocated}}
        \, + \quad \, \underbrace{\text{\fcolorbox{frameOrange}{white}{$\max \set{\rC^{\mathrm{Mem}}_{s, \ell}
            \colon \ell = 0,\dots,L, \, s = 0, \dots, \log_2 (M_{\ttti, \ell}^{\mathrm{Mem}})}$}}}_{\text{Dynamically allocated}} \\
        &\leq c_{\mathrm{Mem}} \cdot \left(\sum_{\ell=0}^L \abs{\cK_{0, \ell}}
        \,\, + \,\, \max \set{\abs{\cK_{\log_2 (M_{\ttti, \ell}^{\mathrm{Mem}}), \ell}} \colon \ell = 0,\dots,L}\right) \\
        &= c_{\mathrm{Mem}} \abs{\cK_{0, 0}} \cdot \left(
        \sum_{\ell=0}^L 2^{\ell \cdot d} + \max \set{M_{\ttti, \ell}^{\mathrm{Mem}} \cdot 2^{\ell \cdot d} \colon \ell = 0,\dots,L} \right). \\
    \end{align*}
    We can focus without loss of generality on the largest level $L$ since all levels are controlled
    by~\eqref{eq:comm-split-formula} and~\eqref{eq:maximal-samples-by-memory}.
    Then, we see
    \begin{align*}
        \rC_{\max}^{\mathrm{Mem}}
        \leq c_{\mathrm{Mem}} \abs{\cK_{0, 0}} 2^{L \cdot d} \cdot \left(
        \sum_{\ell=0}^L 2^{(\ell - L) \cdot d} + M_{\ttti, L}^{\mathrm{Mem}} \right)
        < c_{\mathrm{Mem}} \left(2 + M_{\ttti, L}^{\mathrm{Mem}} \right) \cdot \abs{\cK_{0, 0}} 2^{L \cdot d}
    \end{align*}
    Since $M_{\ttti, L}^{\mathrm{Mem}} = 0$ when the algorithm stops,
    and with $\widehat{c}_{\mathrm{Mem}} \coloneqq 2 \cdot c_{\mathrm{Mem}}$, this is gives
    the final result $\rC_{\max}^{\mathrm{Mem}} < \widehat{c}_{\mathrm{Mem}} \cdot \abs{\cK_{0, L}}$.

\end{proof}

As a consequence of~\Cref{lemma:memory-consumption}, we can impose the memory
budget $\rC_{\max}^{\mathrm{Mem}} < {\mathrm{Mem}}_{\rB}$ by the condition
$\widehat{c}_{\mathrm{Mem}} \abs{\cK_{0, L}} < {\mathrm{Mem}}_{\rB}$,
i.e., the constraint~\eqref{eq:knapsack-mlmc-mem} to the knapsack problem.
Here, $\widehat{c}_{\mathrm{Mem}} \sim c_{\mathrm{Mem}}$ is a constant independent of $s$ and $\ell$
which can be estimated during runtime as well.

\begin{remark}
    Since we can bound the total memory consumption by the number of FE cells $\abs{\cK_{0, L}}$
    on the largest level $L$ multiplied with some moderate constant,
    we say that \textit{the memory footprint of the BMLMC method is optimal},
    since the memory required grows asymptotically like the
    largest possible deterministic computation.
\end{remark}

Finally, we build upon~\Cref{lemma:memory-consumption} and~\cite[Proposition 2.5]{baumgarten2024fully}
and show that the error of every
feasible solution to~\Cref{problem:approximated-mlmc-knapsack}
has to obey an upper and a lower bound.

\begin{corollary}[Lower and Upper Bound for Budgeted MLMC]
    \label{corollary:bmlmc-corollary}
    The error of every feasible solution to~\Cref{problem:approximated-mlmc-knapsack}
    is bounded from below due to the memory constraint~\eqref{eq:knapsack-mlmc-mem}
    and from above due to the computing-time constraint~\eqref{eq:knapsack-mlmc-ct}, i.e.,
    \begin{equation}
        \label{eq:upper-and-lower-bound}
        (\widehat{c}_{\mathrm{Mem}}^{\,-1} {\mathrm{Mem}}_{\rB})^{-\alpha}
        \,\, \lesssim \,\,  \mathrm{err}^{\mathrm{MSE}} \,\, \lesssim \,\,
        \big(\lambda_{\mathrm{s}} + \lambda_{\mathrm{p}}
        \abs{\cP}^{-\delta}\big) \cdot \rT_{\rB}^{-\delta}
    \end{equation}
    with the convergence rate $\delta = \min \set{\tfrac12, \frac{\alpha}{2 \alpha + (\gamma - \beta)}}$,
    the parallelizable fraction of the code $\lambda_{\mathrm{p}} \in [0, 1]$,
    the unparallelizable fraction of the code $\lambda_{\mathrm{s}} = 1 - \lambda_{\mathrm{p}}$
    and the memory constant $\widehat{c}_{\mathrm{Mem}}$ translating a single discretization
    cell to its memory footprint.
\end{corollary}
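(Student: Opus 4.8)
The plan is to establish the two inequalities in~\eqref{eq:upper-and-lower-bound} separately, since they originate from the two independent constraints of~\Cref{problem:approximated-mlmc-knapsack}: the upper bound is forced by the computing-time constraint~\eqref{eq:knapsack-mlmc-ct}, while the lower bound is forced by the memory constraint~\eqref{eq:knapsack-mlmc-mem} together with~\Cref{lemma:memory-consumption}. Throughout I would work from the MSE splitting~\eqref{eq:mse-mlmc} into a sampling error $\err^{\mathrm{sam}}$ and a squared-bias term $\err^{\mathrm{num}} = \norm{\EE[\bu_L - \bu]}_V^2$, and I would adopt the convention of~\Cref{remark:assumptions}~(d)--(e) that the bias decay~\eqref{eq:assumption-alpha-u} is read in terms of the cell count $\abs{\cK_{0,\ell}}$ rather than the mesh width $h_\ell$, which is legitimate because~\eqref{eq:cell-amount} gives $\abs{\cK_{0,\ell}} = \abs{\cK_{0,0}}\, 2^{\ell d}$ and hence $h_\ell \sim \abs{\cK_{0,\ell}}^{-1/d}$.

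For the upper bound I would invoke the existing analysis~\cite[Prop.~2.5]{baumgarten2024fully}. Any feasible sample sequence obeys the CPU-time budget~\eqref{eq:knapsack-mlmc-ct}, so the optimal allocation of $\{M_\ell\}$ under this budget attains the classical MLMC complexity rate, inverted to express the error as a function of the available work. Converting the total CPU budget $\abs{\cP}\cdot\rT_{\rB}$ into a wall-clock budget through an Amdahl-type decomposition into a serial fraction $\lambda_{\mathrm s}$ and a parallel fraction $\lambda_{\mathrm p}$ produces exactly the prefactor $\lambda_{\mathrm s} + \lambda_{\mathrm p}\abs{\cP}^{-\delta}$ multiplying $\rT_{\rB}^{-\delta}$, with $\delta = \min\{\tfrac12,\, \alpha/(2\alpha+(\gamma-\beta))\}$ the standard rate governed by~\eqref{eq:assumptions-random-field} (or~\eqref{eq:assumptions-qoi}). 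Since this matches~\cite{baumgarten2024fully} once the present cell-based reading of the exponents is used, no new work is needed here beyond citing it.

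The lower bound is the genuinely new contribution and is where~\Cref{lemma:memory-consumption} enters. Because every feasible solution must satisfy $\widehat{c}_{\mathrm{Mem}}\abs{\cK_{0,L}} < {\mathrm{Mem}}_{\rB}$, the finest attainable level $L$ is capped by $\abs{\cK_{0,L}} < \widehat{c}_{\mathrm{Mem}}^{-1}{\mathrm{Mem}}_{\rB}$. I would then bound the MSE from below by discarding the nonnegative sampling term, $\err^{\mathrm{MSE}} \geq \err^{\mathrm{num}} = \norm{\EE[\bu_L-\bu]}_V^2$, and estimate the surviving squared bias from below. For this step I would strengthen~\eqref{eq:assumption-alpha-u} to the two-sided, cell-based relation $\norm{\EE[\bu_L-\bu]}_V \sim \abs{\cK_{0,L}}^{-\alpha}$ in the spirit of~\Cref{corollary:relation-beta}; combining it with the memory cap on $\abs{\cK_{0,L}}$ then yields $\err^{\mathrm{MSE}} \gtrsim \bigl(\widehat{c}_{\mathrm{Mem}}^{-1}{\mathrm{Mem}}_{\rB}\bigr)^{-\alpha}$, which is the claimed lower bound.

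The hard part is the lower bound, for two reasons. First,~\Cref{assumption:mlmc} only furnishes \emph{upper} bounds on bias and variance, so a lower bound on the error cannot follow from it directly; one must pass to the strengthened two-sided decay, exactly as~\Cref{corollary:relation-beta} already does for the variance exponents, and argue that the feasible level $L$ is pinned at the memory limit so that the bias cannot be driven below $\abs{\cK_{0,L}}^{-\alpha}$. Second, the whole estimate hinges on measuring memory through the cell count $\abs{\cK_{0,L}}$ rather than $h_L$ -- which is precisely what~\Cref{lemma:memory-consumption} licenses -- so the exponent $\alpha$ appearing in the lower bound is the cell-count bias rate of~\Cref{remark:assumptions}~(e), and the main care is to keep this reinterpretation consistent with the exponent convention used for the upper bound.
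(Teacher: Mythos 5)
Your proposal is correct and follows essentially the same route as the paper: the upper bound is delegated to~\cite[Proposition~2.5]{baumgarten2024fully}, and the lower bound is obtained by dropping the sampling term in~\eqref{eq:mse-mlmc}, strengthening~\eqref{eq:assumption-alpha-u} to the two-sided cell-based relation $\norm{\EE[\bu_L-\bu]}_V \sim \abs{\cK_{0,L}}^{-\alpha}$ per~\Cref{remark:assumptions}~(e), and inserting the memory cap $\abs{\cK_{0,L}} < \widehat{c}_{\mathrm{Mem}}^{\,-1}{\mathrm{Mem}}_{\rB}$ from~\eqref{eq:knapsack-mlmc-mem}. Your explicit step $\mathrm{err}^{\mathrm{MSE}} \geq \mathrm{err}^{\mathrm{num}}$ is in fact a slightly cleaner phrasing of the paper's ``without loss of generality the bias is dominant'' argument, and otherwise the two proofs coincide.
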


\begin{proof}
    For a proof and a short discussion of the right-hand side of~\eqref{eq:upper-and-lower-bound},
    we refer to~\cite[Proposition 2.5]{baumgarten2024fully}.
    Since reducing $\err^{\mathrm{sam}}$ can be done without exceeding the memory constraint,
    we focus without loss of generality on the case where the bias in~\eqref{eq:mse-mlmc}
    is dominant, i.e., we consider $\err^{\mathrm{num}} \lesssim \mathrm{err}^{\mathrm{MSE}}$.
    Strengthening the assumptions~\eqref{eq:assumption-alpha-u} and following~\Cref{remark:assumptions} e),
    we suppose $\abs{\cK_{0,L}}^{-\alpha} \! \sim \norm{\EE[\bu_{L} - \bu]}_V$.
    With the memory constraint~\eqref{eq:knapsack-mlmc-mem}, we get
    \begin{align*}
        (\widehat{c}_{\mathrm{Mem}}^{\,-1} {\mathrm{Mem}}_{\rB})^{-\alpha} \lesssim \mathrm{err}^{\mathrm{MSE}}
    \end{align*}
\end{proof}

\Cref{corollary:bmlmc-corollary} allows deducing hardware requirements for a given problem.
Problems with low regularity, i.e., with small convergence rates $\alpha$, require more memory.
Utilizing distributed, parallel computing can help to decrease both bounds,
as more CPUs usually come with more nodes and more memory.
And clearly, more CPUs decrease the bound on
the right-hand side of~\eqref{eq:upper-and-lower-bound} up
to a parallelization bias~\cite{baumgarten2024fully}.

\subsection{Multi-PDE FE Simulations}
\label{subsec:multi-pde-finite-element-simulations}

Our particular focus lies on problems consisting of a sequence of coupled PDEs. That is, the solution
of a given PDE is needed, e.g.~as a coefficient, in the formulation of a subsequent one.
We refer to such problems as multi-{PDE} simulations.
Performing multi-PDE simulations using FEM,
imposes new challenges to the method and the implementation.
In particular, different PDEs with different characteristics require
different discretization schemes.
At the same time, memory constraints should be upheld and
data exchange between processes avoided as much as possible.
Our solution is to share the underpinning multi-sample mesh between different PDEs.
As an example, we study a specific multi-PDE problem in this article
where we solve (batch)-sample wise the following problems
(cf.~\cite{baumgarten2023fully, baumgarten2021parallel, kumar2018multigrid, muller2013multilevel}
for other, similar setups):
\begin{enumerate}
    \item[(a)]
    generating Gaussian RF realizations $\by^{(m)}$ with Matérn
    covariance functions based on a stochastic PDE approach,

    \item[(b)]
    determining the Darcy flow field $\bq^{(m)}$ for log-normal permeability coefficients $\exp(\by^{(m)})$
    using the result of the first step as the input,

    \item[(c)]
    and solving a hyperbolic mass transport problem with the given
    Darcy flow field $\bq^{(m)}$ from the previous step.
\end{enumerate}

In the large-scale, distributed setting
each of these steps exhibits its own characteristic challenges.
In the following, we detail our approach and summarize this sequence of
computational tasks in~\Cref{alg:mx-fem},
where our notation makes use of a \textit{currying} formulation,
inspired by the implementation released in~\cite{wieners2024mpp341}.
In the upcoming paragraphs, we elaborate on the individual steps (a)--(c) and
the corresponding functions $\texttt{SPDESampling}$, $\texttt{MixedSolve}$
and $\texttt{TransportSolve}$ in the algorithm.

\smallskip

\begin{algorithm}
    \caption{MX-FEM (Example implementation)}
    \label{alg:mx-fem}
    {
        \vspace{-0.2cm}
        \begin{align*}
            &\texttt{function MX-FEM}(\Delta M_{\ttti, \ell}, M^{\mathrm{Mem}}_{\ttti, \ell}, \ell) \colon
            \hspace{0.15cm} \text{// Gets called in \Cref{alg:bmlmc}} \\[-1mm]
            &\quad
            \begin{cases}
                \hspace{1.47cm} s \hspace{1.47cm} \leftarrow \lceil\log_2 (\min \{\abs{\cP}, \, \Delta M_{\ttti, \ell}, \, M^{\mathrm{Mem}}_{\ttti, \ell} \})\rceil \\[1mm]
                (M_{s, \ell}, \, \abs{\cK_{s, \ell}} \, \Delta M_{s, \ell}) \leftarrow
                (2^s, \, \abs{\cK_{0, 0}} 2^{\ell \cdot d + s}, \, \lceil 2^{-s} \Delta M_{\ttti, \ell} \rceil) \\[1mm]
                \texttt{for } m = 1, \dots, \Delta M_{s, \ell} \colon
                \hspace{1.41cm} \text{// Sequential division of work if necessary} \\
                \quad
                \begin{cases}
                    \texttt{run for } M_{s,\ell} \texttt{ on } \cK_{s, \ell}^{(m)} \texttt{ in parallel} \colon \\
                    \quad \begin{cases}
                              \quad \quad \brho^{(m)}_{s, \ell} &\leftarrow \, \texttt{TransportPDESolve}(\cK^{(m)}_{s, \ell}) \\
                              \quad \quad \brho^{(m)}_{s, \ell-1} &\leftarrow  \, \texttt{TransportPDESolve}(\cK^{(m)}_{s, \ell-1}) \\
                              \Delta\texttt{data\_sample}_{s, \ell}^{(m)} &\leftarrow  \,
                              \{\bu_{s, \ell}^{(m)} \leftarrow \brho_{s, \ell}^{(m)},
                              \bv_{s, \ell}^{(m)} \leftarrow \brho_{s, \ell}^{(m)} - \rP_{\ell-1}^{\ell}\brho_{s, \ell-1}^{(m)}\} \\
                              \Delta\texttt{data\_comm}_{s, \ell}^{(m)} &\leftarrow  \, \texttt{Welford}(\Delta\texttt{data\_sample}_{s, \ell}^{(m)}) \\
                    \end{cases} \\\\[-3mm]
                    \Delta\texttt{data\_distr}_{0, \ell} \leftarrow
                    \texttt{Welford}(\Delta\texttt{data\_comm}_{s, \ell}^{(m)}) \\
                \end{cases} \\\\[-3mm]
                \texttt{return } \Delta\texttt{data\_distr}_{0, \ell} \\
            \end{cases} \\[1mm]
            &\texttt{function TransportSolve}(\cK^{(m)}_{s, \ell}) \colon
            \hspace{0.35cm} \text{// Solve linear transport model} \\[-1mm]
            &\quad
            \begin{cases}
                \bq_{s, \ell}^{(m)}, \bp_{s, \ell}^{(m)} \hspace{2mm}\leftarrow \texttt{MixedPDESolve}(\cK^{(m)}_{s, \ell}) \\[1mm]
                \hspace{4mm} \brho_{s, \ell}^{(m, 0)} \hspace{4.3mm} \leftarrow \tsprod{\bpsi^{\text{dG}}_{n}, \bpsi_{n'}^{\text{dG}}} \\[1mm]
                \texttt{for } n = 1, \dots, N_{\ell}^{\tau} \colon \hspace{1.92cm} \text{// Time stepping scheme} \\
                \quad
                \begin{cases}
                    \,\,\, \brho_{s, \ell}^{(m, n)} \,\,\, \leftarrow [\bq_{s, \ell}^{(m)} \!, \brho_{s, \ell}^{(m, n-1)}]
                    \begin{cases}
                        \texttt{find } \brho_{s, \ell}^{(m,n)}
                        \texttt{ for } \rL, \rF \texttt{ from \eqref{eq:transport-matrices}:}  \\[1mm]
                        \,\,\,\, (\rL + \frac{\tau_{\ell}}{2} {\rF}) \brho_{s, \ell}^{(m, n)}
                        = (\rL - \frac{\tau_{\ell}}{2} \rF) \brho_{s, \ell}^{(m, n-1)}
                    \end{cases} \\\\[-3mm]
                    \brho_{s, \ell}^{(m, n-1)} \leftarrow \brho_{s, \ell}^{(m, n)}
                \end{cases} \\\\[-3mm]
                \brho_{s, \ell}^{(m)} \leftarrow \brho_{s, \ell}^{(m, N_{s, \ell}^{\tau})} \\[1mm]
                \texttt{return } \brho_{s, \ell}^{(m)}
            \end{cases} \\[1mm]
            &\texttt{function MixedSolve}(\cK^{(m)}_{s, \ell}) \colon
            \hspace{1.05cm} \text{// Solve subsurface Darcy model} \\[-1mm]
            &\quad
            \begin{cases}
                \ba_{s, \ell}^{(m)} &\leftarrow \exp(\texttt{SPDESampling}(\cK^{(m)}_{s, \ell}))^{-1} \\[1mm]
                \bq_{s, \ell}^{(m)}, \bp_{s, \ell}^{(m)} &\leftarrow [\ba_{s, \ell}^{(m)}]
                \begin{cases}
                    \texttt{find } \bq_{s, \ell}^{(m)}, \bp_{s, \ell}^{(m)}
                    \texttt{ for } \rA, \rB, \rB^\top \texttt{ from \eqref{eq:mixed-matrices}:} \\[1mm]
                    \quad -\rA[\ba_{s, \ell}^{(m)}] \bq_{s, \ell}^{(m)} + \rB \bp_{s, \ell}^{(m)} = \bp_{s, \ell}^{\mathrm{D}} \\
                    \quad \hspace{0.95cm} \rB^\top \bq_{s, \ell}^{(m)} \hspace{1.35cm} = 0 \\
                \end{cases} \\\\[-3mm]
                \texttt{return} &\bq_{s, \ell}^{(m)}, \bp_{s, \ell}^{(m)}
            \end{cases}\\[1mm]
            &\texttt{function SPDESampling}(\cK^{(m)}_{s, \ell}) \colon
            \hspace{0.62cm} \text{// Generate Gaussian RF} \\[-1mm]
            &\quad
            \begin{cases}
                \rW \leftarrow (\texttt{MassLumping}(\tsprod{\bpsi_n^{\mathrm{Lag}}, \bpsi_{n'}^{\mathrm{Lag}}})^{-1/2} \\[1mm]
                \texttt{for } b = 1, \dots, 2^d \colon
                \hspace{2.1cm} \text{// Iterate over boundary conditions} \\[1mm]
                \quad
                \begin{cases}
                    \bw_{s, \ell}^{(m, b)} &\leftarrow \rW \bxi^{(m, b)}_{s, \ell}
                    \hspace{1.25cm} \text{// Where } \bxi^{(m, b)}_{s,\ell} \text{ is normally distributed} \\[1mm]
                    \by_{s, \ell}^{(m, b)} &\leftarrow [\bw^{(m, b)}_{s, \ell}]
                    \begin{cases}
                        \texttt{find } \by_{s, \ell}^{(m, b)}
                        \texttt{ for }  \rK
                        \texttt{ from \eqref{eq:spde-matrices}:} \\[1mm]
                        \quad \rK \by_{s, \ell}^{(m, b)}
                        = \bw_{s, \ell}^{(m, b)} \\
                    \end{cases} \\\\[-3mm]
                    \by_{s, \ell}^{(m)} &\leftarrow \by_{s, \ell}^{(m)} + \by_{s, \ell}^{(m, b)} \\
                \end{cases} \\\\[-3mm]
                \texttt{return } 2^{-d / 2} \, \by_{s, \ell}^{(m)}
            \end{cases}
        \end{align*}
    }
\end{algorithm}

\paragraph{Generating Gaussian RF Realisations}
A key requirement implied by the multi-PDE setting
is that the memory usage of the sampling algorithm for the Gaussian RF
$\by^{(m)}$ must not exceed that of the subsequent solvers for the problems in (b) and~(c).
This excludes sampling methods based on dense factorisations of the covariance matrix
due to their superlinear scaling with respect to the problem size.
Although Karhunen-Lo{\'e}ve expansions \cite{khoromskij2009application, schwab2006karhunen} and similar
reduced basis techniques~\cite{harbrecht2012low} 
provide favourable runtime performance,
the setup time to find the principle components and the memory footprint disqualifies
them from our considerations as well.
A similar argument holds true for the FFT-based circulant embedding
method~\cite{dietrich1997fast}, 
where the amount of embedding required to make the matrix circular and to achieve positive-definiteness,
corresponds to a non-negligible additional memory overhead~\cite{bachmayr2020unified}. 

The sampling method which naturally fits into our multi-PDE setting is the
stochastic PDE (SPDE) approach as introduced in \cite{lindgren2011explicit}.
The method is based on the observation, that stochastically stationary solutions
$\by(\omega, \bx)$ to the linear SPDE
\begin{equation}
    \label{eq:spde}
    \big(\kappa^2 - \Delta \big)^{\zeta} \by(\omega, \bx) = \cW(\omega, \bx), \qquad \bx \in \RR^d,
\end{equation}
with $\zeta, \kappa > 0$ and with $\cW$ denoting a Gaussian white noise process on $\RR^d$,
possess covariance functions of the Matérn family, i.e.,  for any $\bx_1, \bx_2 \in \RR^d$,
\begin{equation}
    \label{eq:matern-covariance}
    \Cov(\bx_1, \bx_2) = \frac{\sigma^2}{2^{\nu - 1} \Gamma(\nu)} (\kappa \br)^{\nu} \cK_{\nu}(\kappa \br), \quad
    \br = \norm{\bx_1 - \bx_2}_2, \quad \kappa = \frac{\sqrt{2\nu}}{\lambda}.
\end{equation}
Here, $\nu = 2 \zeta - d/2$ controls the smoothness of the field,
$\lambda > 0$ is the correlation length and $\sigma$ is a global
scaling factor for the variance.
Furthermore, $\cK_{\nu}$ is the modified Bessel function of the second kind and $\Gamma$
is the Gamma function.

In~\cite{lindgren2011explicit} a FE approximation to~\eqref{eq:spde}
is proposed to efficiently compute approximate realizations of the Gaussian RF $\by(\omega, \bx)$.
The main benefits of this in our setting
are that we can rely on well-established theory
and leverage the existing multi-sample parallelization of the FE method for
an efficient and scalable generation of Gaussian RF realisations.
Details regarding the discretization of~\eqref{eq:spde}, in particular
regarding the fractional exponent $\zeta$ and the Gaussian white noise
$\cW$ can be found in~\cite{croci2018efficient}. 

One challenge in transitioning from the continuous SPDE on $\RR^d$ to its FE
discretization on a bounded domain $\cD \subset \RR^d$ is that artificial
boundary conditions cause aliasing effects near the domain boundary.
These can be mitigated by computing on a larger domain,
reducing the error exponentially with overlap size~\cite{khristenko2019analysis}.
However, again due to memory constraints, enlarging the mesh is not feasible.
Instead, we compute averages of $2^d$ SPDE solutions with  alternating
Dirichlet and Neumann conditions, as in~\cite{kutri2024dirichlet}.
This method allows for isotropic Gaussian RF sampling on the original
mesh on $\cD$, at the expense of having to compute $2^d$ realizations.

The particular procedure $\texttt{SPDESampling}$ is outlined in~\Cref{alg:mx-fem}.
First, we assemble the FE mass matrix $\rM$ using standard Langrange
FEM. We mass-lump it and compute its inverse square root
\begin{align*}
    \rW \coloneqq (\texttt{MassLumping}(\rM))^{-1/2}
    \quad \text{with} \quad
    \rM \coloneqq \tsprod{\bpsi_n^{\mathrm{Lag}}, \bpsi_{n'}^{\mathrm{Lag}}}\,.
\end{align*}
An approximation~$\bw_{s, \ell}^{(m)}$ to the Gaussian white noise $\cW$ can then be
computed by applying $W$ to a vector~$\bxi_{s, \ell}^{(m)}$ of standard normally
distributed random variables~\cite{lindgren2011explicit, bolin2020numerical}.
Finally, we can use a multigrid-preconditioned conjugate gradient method to
solve for realizations $\by_{s, \ell}^{(m)}$ of the
SPDE~\eqref{eq:spde} with stiffness matrix
\begin{equation}
    \label{eq:spde-matrices}
    \rK \coloneqq \tsprod{\kappa^2 \bpsi_{n}^{\mathrm{Lag}},
      \bpsi_{n'}^{\mathrm{Lag}}} +
    \tsprod{\nabla \bpsi_{n}^{\mathrm{Lag}}, \nabla \bpsi_{n'}^{\mathrm{Lag}}}.
\end{equation}

\paragraph{Solving the Darcy system in a mixed formulation}
The third function $\texttt{MixedSolve}$ in~\Cref{alg:mx-fem} outlines how to approximate the pressure
$\bp \colon \Omega \times \cD \rightarrow \RR$ and the flux field
$\bq \colon \Omega \times \cD \rightarrow \RR^d$ of the Darcy system
\begin{equation}
    \label{eq:darcy}
    \pdeProblem{
        - \exp(\by(\omega, \bx)) \, \nabla \bp(\omega, \bx) &=&
        \bq(\omega, \bx) & \text{on } \cD \\
        \div(\bq(\omega, \bx)) &=& 0 &\text{on } \cD
      }
\end{equation}
with the log-normal permeability field $\exp(\by(\omega, \bx))$ from the previous step,
as well as mixed Dirichlet and Neumann
boundary conditions $\bp(\omega, \bx) = \bp^{\text{D}}$ and  $\bq(\omega, \bx) \cdot \bn = \bq^{\text{N}}$
on $\partial \cD_{\mathrm{D}}$ and $\partial \cD_{\mathrm{N}}$,
respectively, with $\partial \cD = \partial \cD_{\mathrm{D}} \cup \partial \cD_{\mathrm{N}}$.

A popular choice for the discretization of \eqref{eq:darcy}
is the mixed FE  method, using the Raviart-Thomas space
$V_{s, \ell}^{\mathrm{RT}}(\partial \cD_{\mathrm{N}})$ and the space
of piecewise constant functions $V_{s, \ell}^{\text{FV}}$
for the flux $\bq_{s,\ell}^{(m)}$ and the pressure
$\bp_{s,\ell}^{(m)}$, respectively. For details (with a similar
notation) we refer to~\cite{baumgarten2023fully, baumgarten2021parallel}.
For further reading and error estimations,
we refer to standard literature on mixed FE
methods e.g.~\cite{boffi2013mixed}. 
Based on these references and with $\ba_{s, \ell}^{(m)} \coloneqq \exp(\by^{(m)}_{s, \ell})^{-1}$
we define the matrices
\begin{equation}
    \label{eq:mixed-matrices}
    \rA[\ba_{s, \ell}^{(m)}] \coloneqq \tsprod{\ba_{s, \ell}^{(m)} \bpsi_{n}^{\mathrm{RT}}, \bpsi_{n'}^{\mathrm{RT}}},
    \quad
    \rB \coloneqq \tsprod{\bpsi_{K}^{\text{FV}}, \div \bpsi_{n}^{\text{RT}}},
    \quad
    \rB^{\top} \coloneqq \tsprod{\div \bpsi_{n}^{\text{RT}}, \bpsi_{K}^{\text{FV}}}
\end{equation}
and the right-hand side vector $\bp^{\rD}_{s. \ell} \coloneqq \tsprod{\bp^{\rD}, \bpsi_{n}^{\text{RT}} \cdot \bn}$.
The solution to~\eqref{eq:darcy} can then be approximated
as outlined in the $\texttt{MixedSolve}$ function
of~\Cref{alg:mx-fem}.

\smallskip

\paragraph{Solving the hyperbolic transport problem}
Finally, we compute the mass distribution $\brho \colon \Omega \times \cD \times [0, T] \rightarrow \RR$
of the hyperbolic transport problem
\begin{equation}
    \label{eq:hyperbolic-transport}
    \pdeProblem{
        \partial_t \brho(\omega, \bx, t) + \div(\bq(\omega, \bx) \brho(\omega, \bx, t)) &=& 0 & \text{on } \cD \times (0, T] \\
        \brho(\omega, \bx, t) &=& \brho^{0} & \text{on } \cD
    }
\end{equation}
where $\bq(\omega, \bx)$ is the flux field from the Darcy system~\eqref{eq:darcy},
and $\brho^{0}$ is the initial condition.
To simplify the notation, no inflow boundary conditions or forcing
terms are considered, but they can easily be included.
We refer to~\cite{hochbruck2015efficient, di2011mathematical}
for analysis on such systems, and to~\cite{baumgarten2023fully, baumgarten2021parallel}
for details on the discontinuous Galerkin (dG) space that we employ.
Here, we only state the definitions of the
mass matrix $\rL$ and the flux matrix $\rF[\bq_{s, \ell}^{(m)}]$
which depends upon the flux field $\bq_{s, \ell}^{(m)}$:
\begin{equation}
    \label{eq:transport-matrices}
    \rL \coloneqq \tsprod{\bpsi_{n}^{\text{dG}}, \bpsi_{n'}^{\text{dG}}}, \,\,\,\,
    \rF[\bq_{s, \ell}^{(m)}] \coloneqq -\tsprod{\bq_{s, \ell}^{(m)} \bpsi_{n}^{\text{dG}}, \nabla \bpsi_{n'}^{\text{dG}}}
    + \sum_{F \in \cF} \tsprod{\bn_F \cdot (\bq_{s, \ell}^{(m)} \bpsi_{n}^{\text{up}}), \bpsi_{n'}^{\text{dG}}}.\!\!
\end{equation}
In the assembly of the flux matrix $\rF$ the upwind flux
$\bpsi_{n}^{\text{up}}$ is chosen on the cell faces $F \in \cF$ to
ensure stability of the scheme (cf.~details~e.g.~in~\cite{hochbruck2015efficient}).

After discretizing in space, it still remains to
discretize~\eqref{eq:hyperbolic-transport} in time.
Here, we employ the unconditionally stable,
second-order accurate implicit midpoint rule
with step size $\tau_\ell = T/N_\ell^{\tau}$, which provides solutions
$\brho_{s, \ell}^{(m, n)}$ at time points
$t_n = n \tau_\ell$ with $n = 1, \dots, N_\ell^{\tau}$.
However, other time stepping schemes can be used as well.
The arising linear systems for this implicit time stepping scheme are solved
using a parallel GMRES method with a point-block Jacobi preconditioner.

\paragraph{Multi-X FEM}
In conclusion, we have now motivated all three components of the Multi-X FEM:
the parallel computation of multiple domain distributed samples,
the memory-efficient accumulation on a multi-index set,
and the treatment of a coupled PDEs sequence.
The first function \texttt{MX-FEM} in~\Cref{alg:mx-fem} is the entry point
into this functionality.
It is the implementation
of~\eqref{eq:comm-split-formula},~\eqref{eq:sample-amount},~\eqref{eq:cell-amount}
and the subsequent parallel Monte Carlo method applied to the multi-PDE problem.
Note however, that within the \texttt{MX-FEM} further variants of the
\texttt{Welford} functionality have to be invoked
to appropriately compute the statistics.
For details, we refer again
to~\cite{baumgarten2023fully, pebay2016numerically}.

    \section{Numerical Experiments}
\label{sec:numerical-experiments}
In this section, we illustrate two numerical examples to support the findings
and to demonstrate the functionality of the method introduced in the previous sections.
All experiments are based on the software M++~ in version~\cite{wieners2024mpp341}
using four nodes of the HoreKa supercomputer where on each node
we reserve $64$ CPUs and $128$Gbyte of memory resulting in $\abs{\cP} = 256$ and $\mathrm{Mem}_{\rB}=512$Gbyte.
To keep the presentation of the results lean, we will not conduct
detailed numerical experiments on the different methods, scaling properties or hyperparameters.
We refer to~\cite{baumgarten2023fully, baumgarten2024fully} for a detailed discussion on these topics
and commit to the $\rL^2$-norm throughout the experiments.

\subsection{Problem Configuration and Illustrations}
\label{subsec:problem-configuration-and-illustration}
We consider the multi-PDE problem outlined
in~\Cref{subsec:multi-pde-finite-element-simulations}
on the domain $\cD = (0, 1)^2$.
For the transport problem~\eqref{eq:hyperbolic-transport} we choose the time interval  $[0, T] = [0, 0.5]$ and the initial condition
\begin{align*}
    \brho^0(\bx) = \exp \roundlr{-(\bx - \bx^0)^\top \Sigma \, (\bx - \bx^0)}
    \,\,\, \text{with} \,\,\, \bx^0 = (0.5, 0.8)^\top
    \,\, \text{and} \,\,\Sigma =
    \begin{pmatrix}
        0.3^2 & 0     \\
        0     & 0.1^2
    \end{pmatrix}.
\end{align*}
The vector field $\bq(\omega, \bx)$ is given through the approximation of~\eqref{eq:darcy}
with homogeneous Dirichlet boundary conditions on $\partial \cD_{\rD} = \set{\bx \in \partial \cD \colon x_2 = 0}$,
the deterministic Neumann condition $\bq \cdot \bn \equiv -1$ on the boundary
$\partial \cD_{\rN} = \set{\bx \in \partial \cD \colon x_2 = 1}$
and a homogeneous Neumann boundary condition everywhere else on $\partial \cD$.
As a result, the vector field $\bq(\omega, \bx)$ points from the top to the bottom of the domain
with fluctuations in the transport velocity and random disturbances in the $x_1$-direction.
These fluctuations are inherited from the log-normal permeability field $\exp (\by(\omega, \bx))$ where the
GRF possesses a Mat\'ern covariance function~\eqref{eq:matern-covariance}
with $\lambda = 0.3$ and a Bessel covariance $\nu = 1$ such that
$\zeta = 1$ in two dimensions.

\begin{figure}[p]
    \centering

    \includegraphics[width=0.215\textwidth]{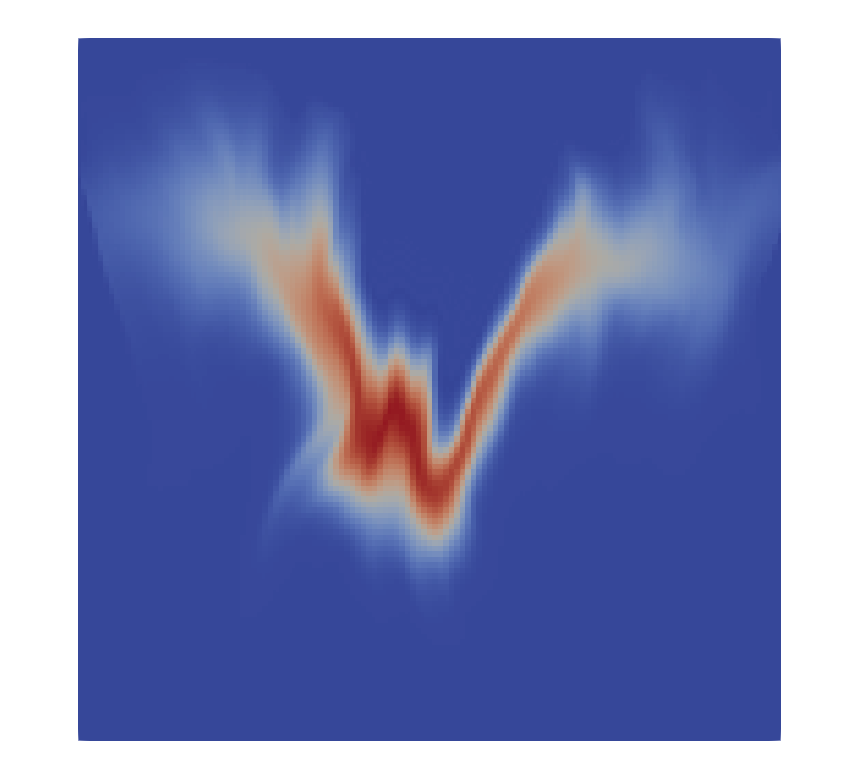}
    \hspace{-5mm}
    \includegraphics[width=0.215\textwidth]{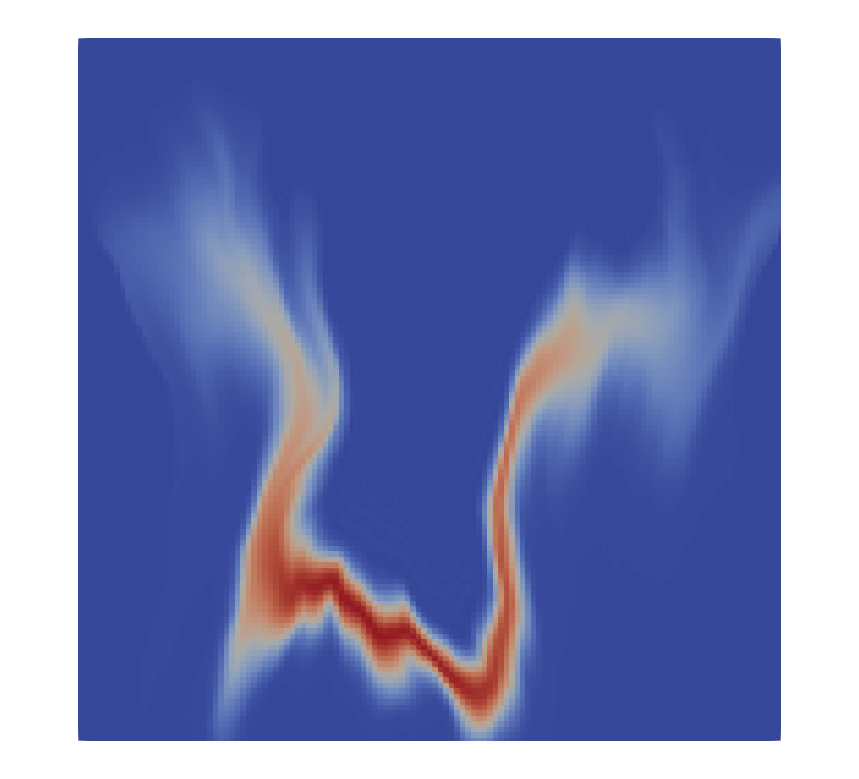}
    \hspace{-5mm}
    \includegraphics[width=0.215\textwidth]{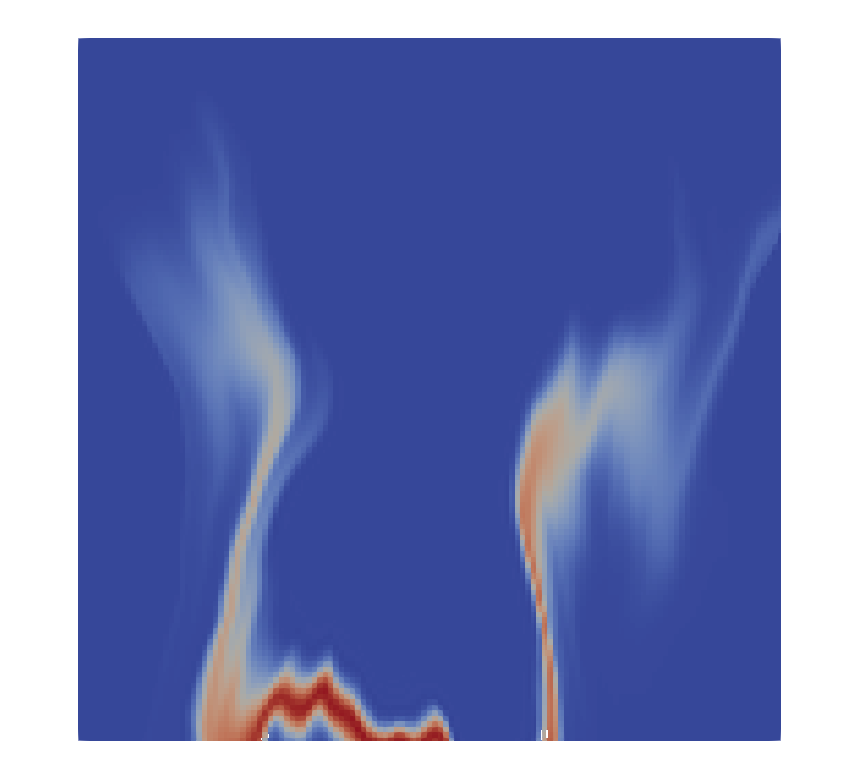}
    \hspace{-5mm}
    \includegraphics[width=0.215\textwidth]{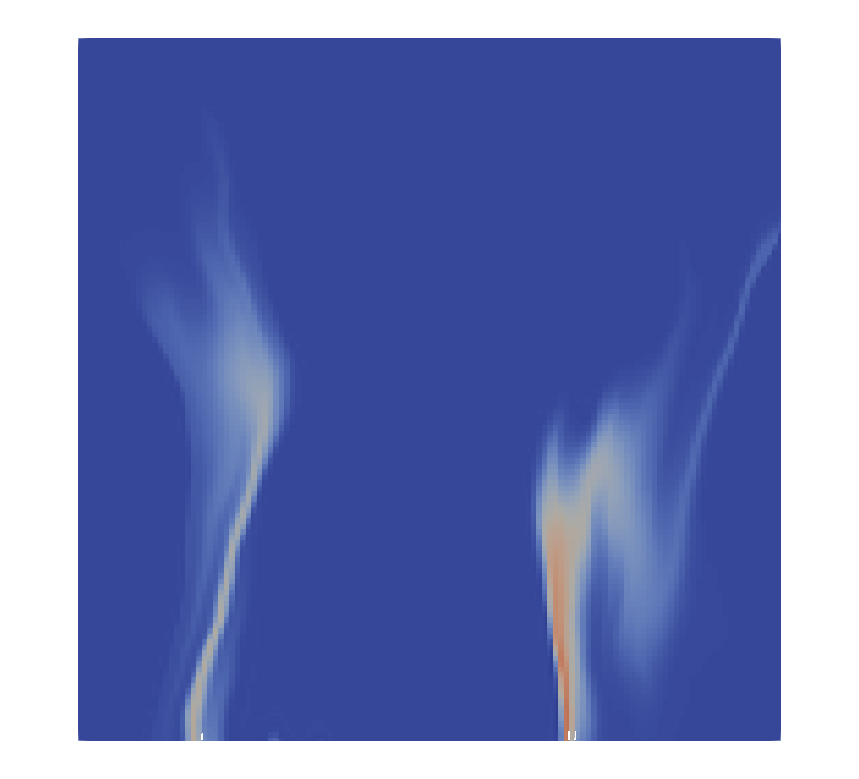}
    \hspace{-5mm}
    \includegraphics[width=0.215\textwidth]{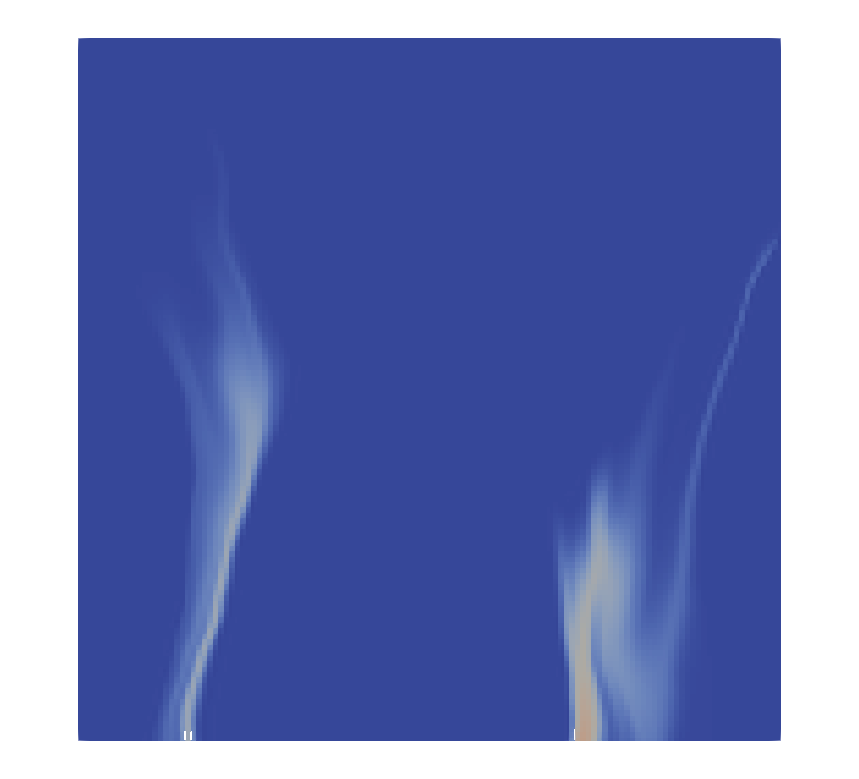}
    \vspace{-10mm}
    \caption{First sample solution $m=1$ on level $\ell=8$.}
    \label{fig:sample1}

    \smallskip

    \includegraphics[width=0.215\textwidth]{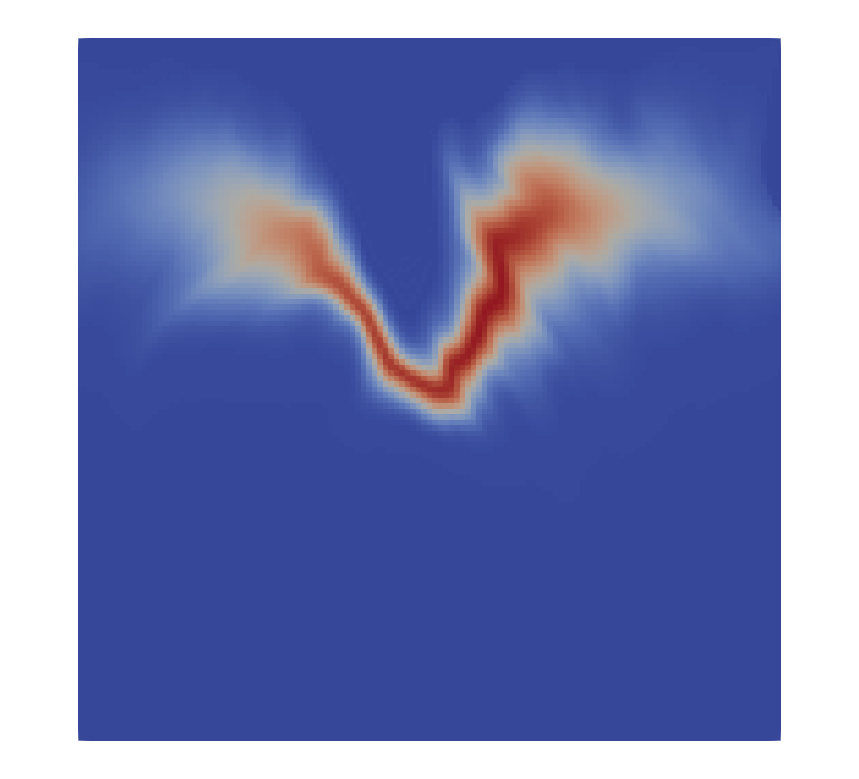}
    \hspace{-5mm}
    \includegraphics[width=0.215\textwidth]{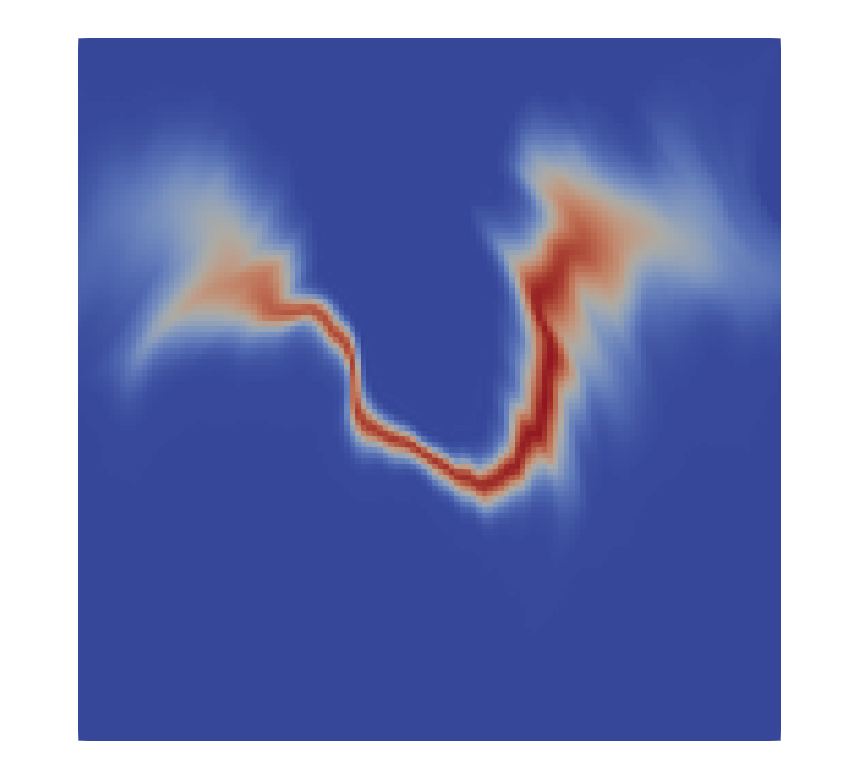}
    \hspace{-5mm}
    \includegraphics[width=0.215\textwidth]{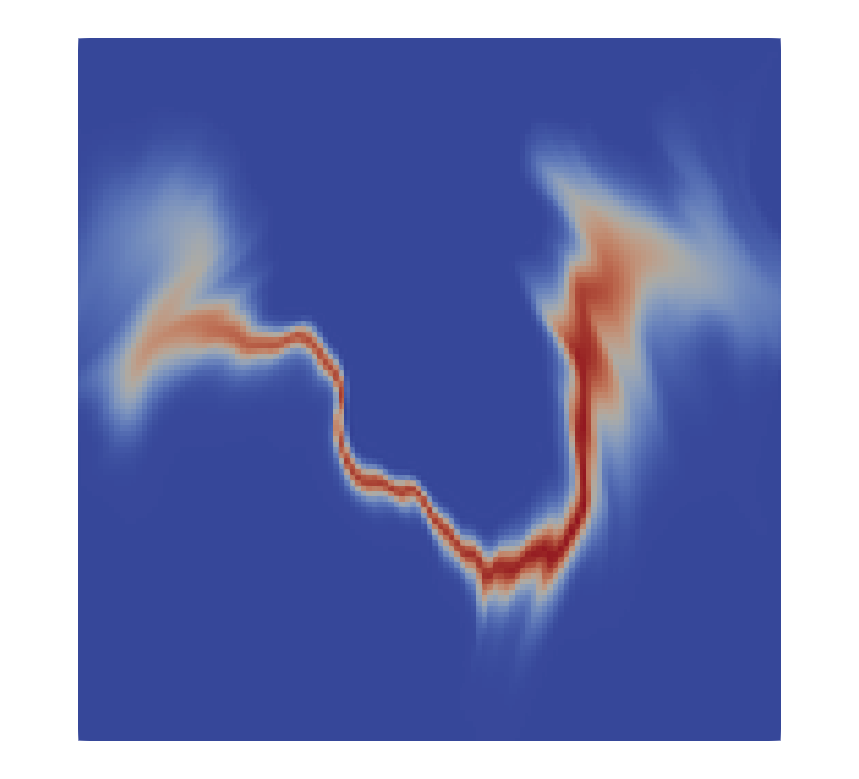}
    \hspace{-5mm}
    \includegraphics[width=0.215\textwidth]{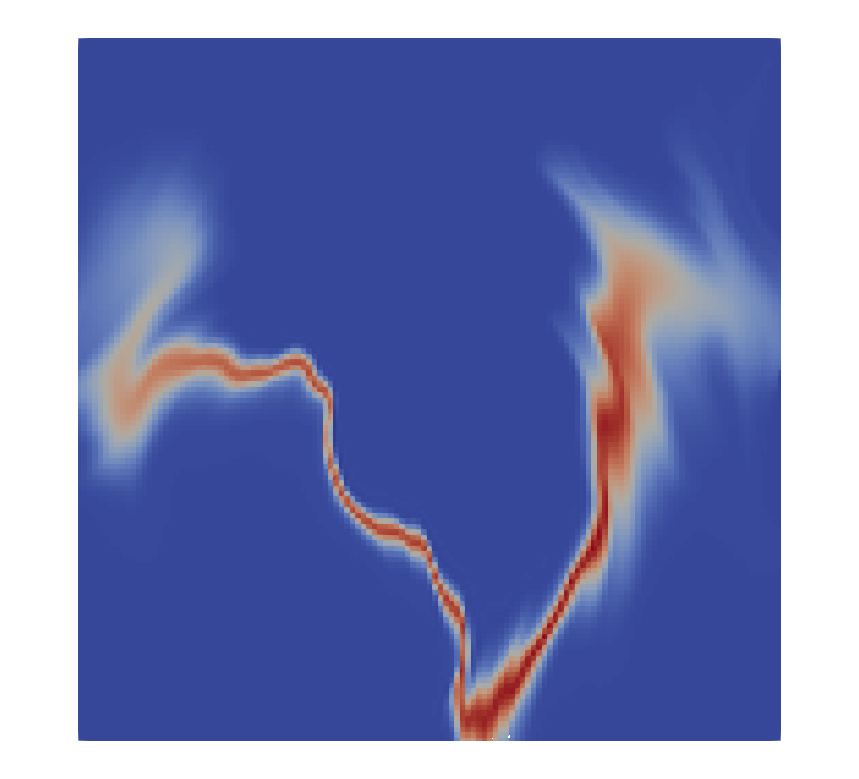}
    \hspace{-5mm}
    \includegraphics[width=0.215\textwidth]{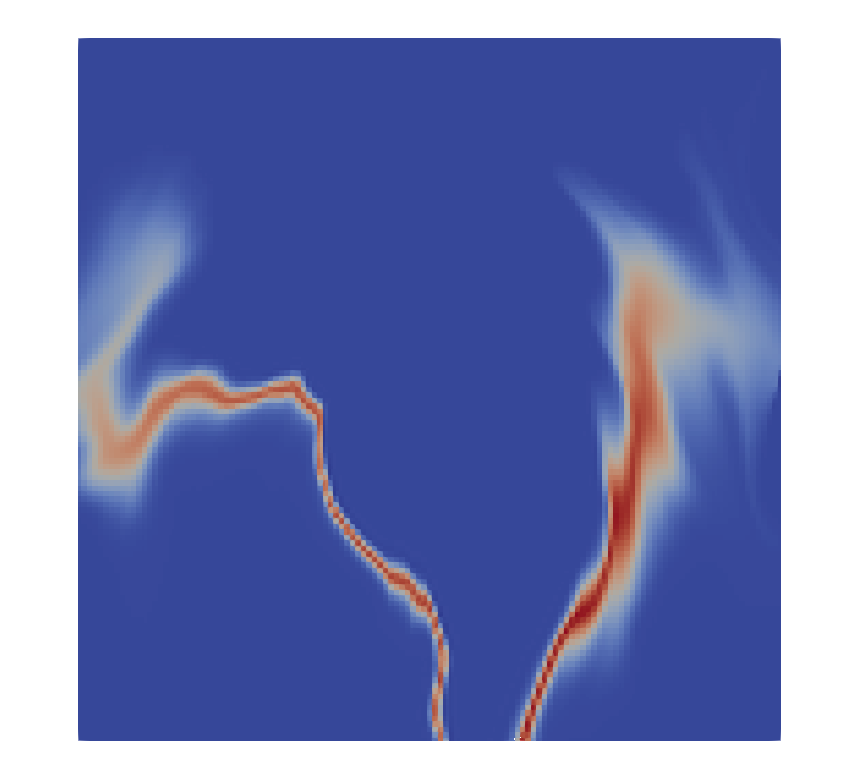}
    \vspace{-10mm}
    \caption{Second sample solution $m=2$ on level $\ell=8$.}
    \label{fig:sample2}

    \smallskip

    \includegraphics[width=0.215\textwidth]{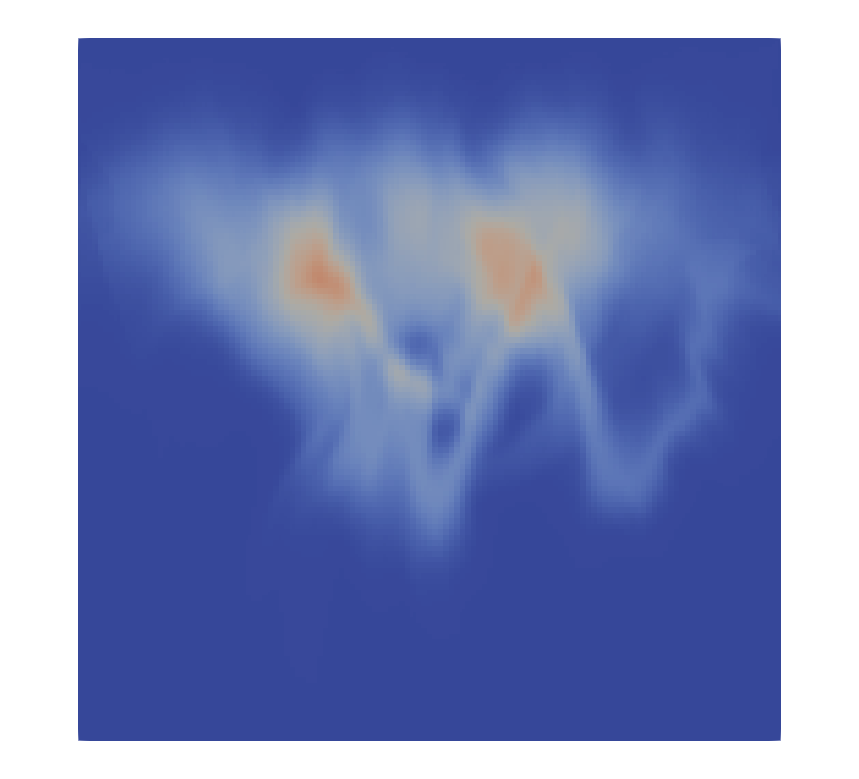}
    \hspace{-5mm}
    \includegraphics[width=0.215\textwidth]{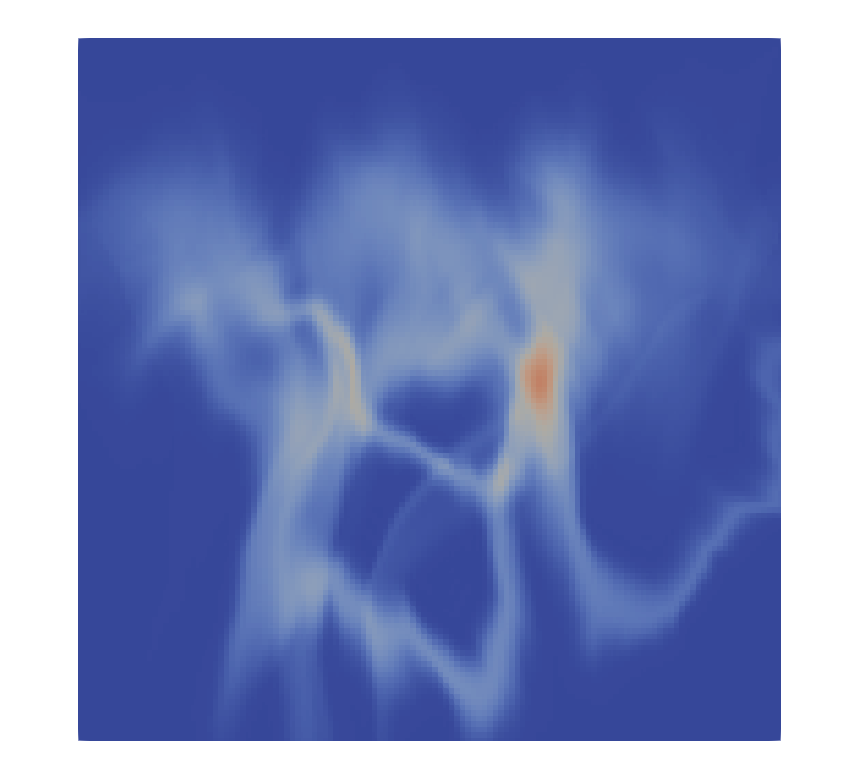}
    \hspace{-5mm}
    \includegraphics[width=0.215\textwidth]{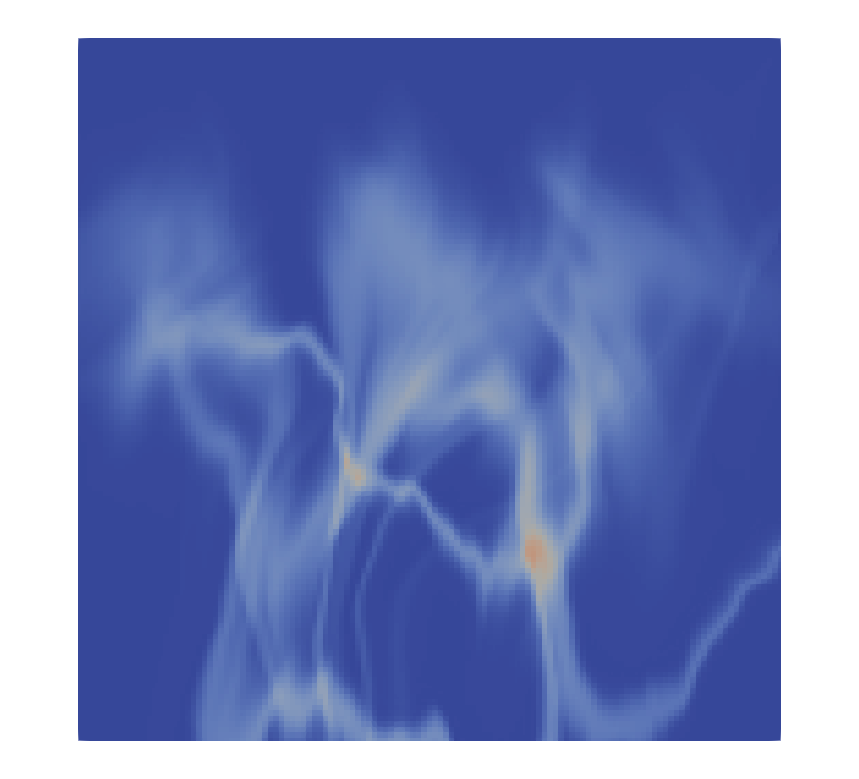}
    \hspace{-5mm}
    \includegraphics[width=0.215\textwidth]{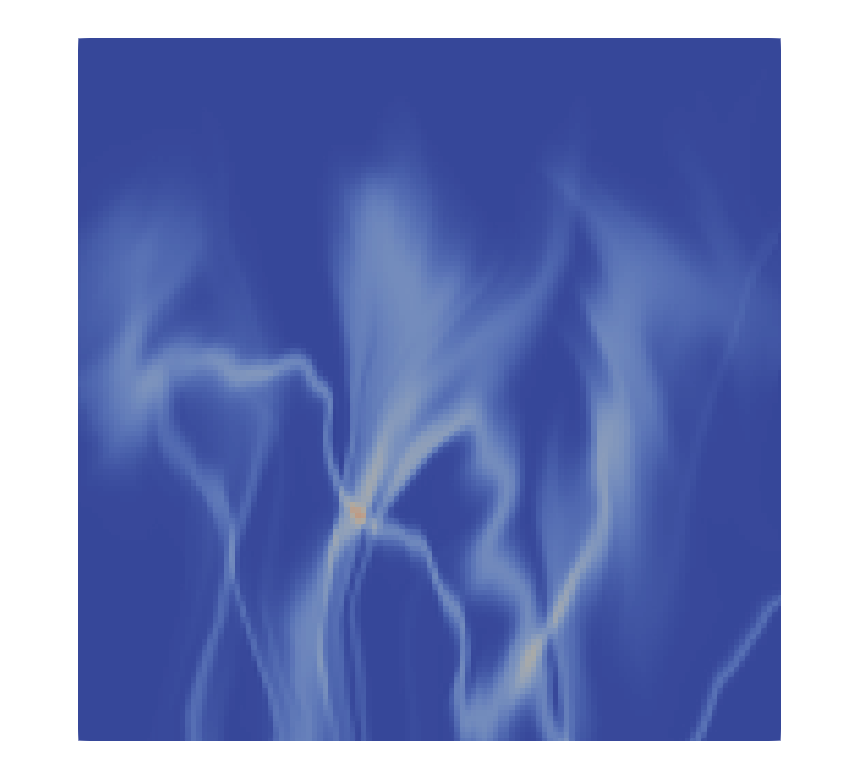}
    \hspace{-5mm}
    \includegraphics[width=0.215\textwidth]{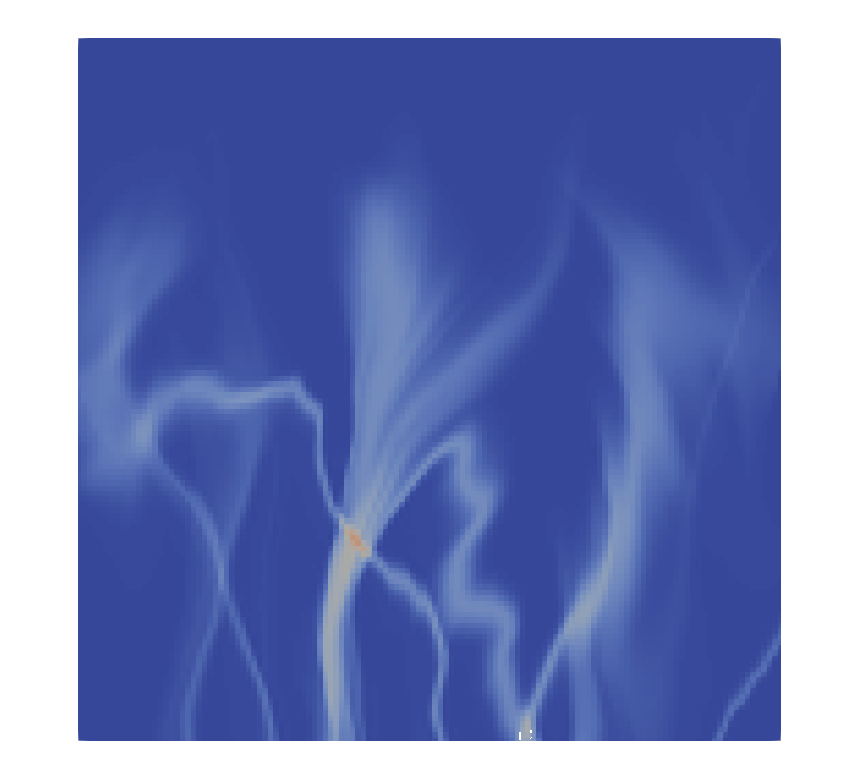}
    \vspace{-10mm}
    \caption{Estimated mean field on level $\ell=8$ of four parallel samples.}
    \label{fig:mean-para}

    \smallskip

    \includegraphics[width=0.215\textwidth]{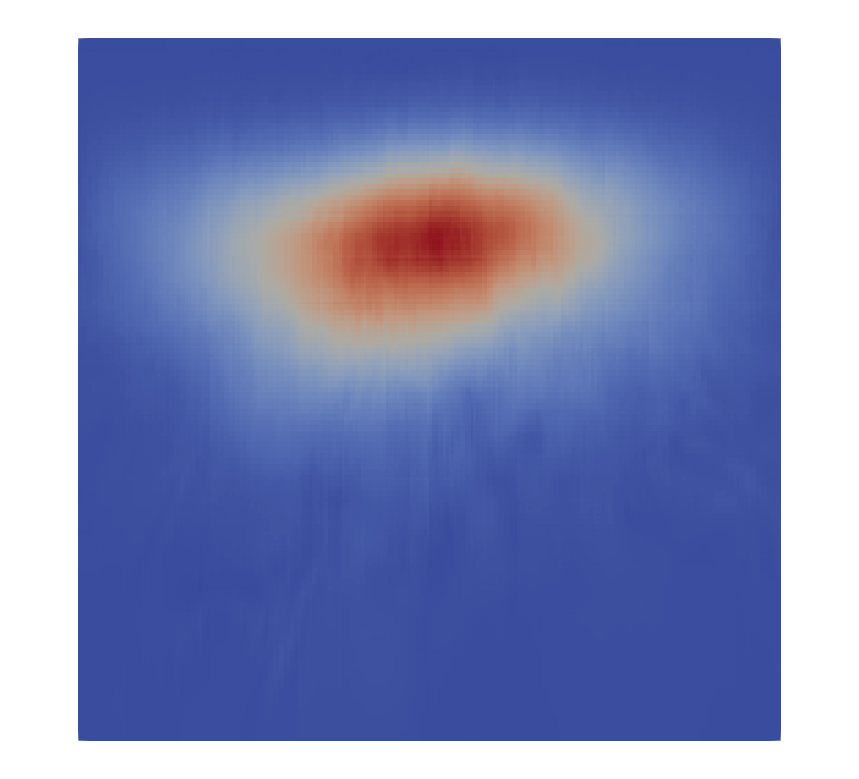}
    \hspace{-5mm}
    \includegraphics[width=0.215\textwidth]{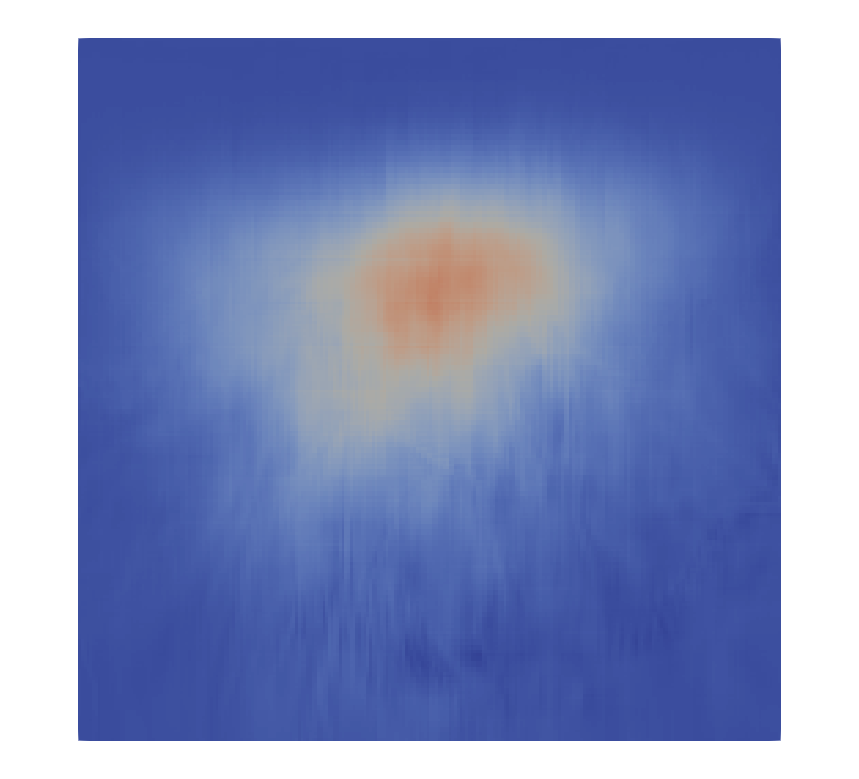}
    \hspace{-5mm}
    \includegraphics[width=0.215\textwidth]{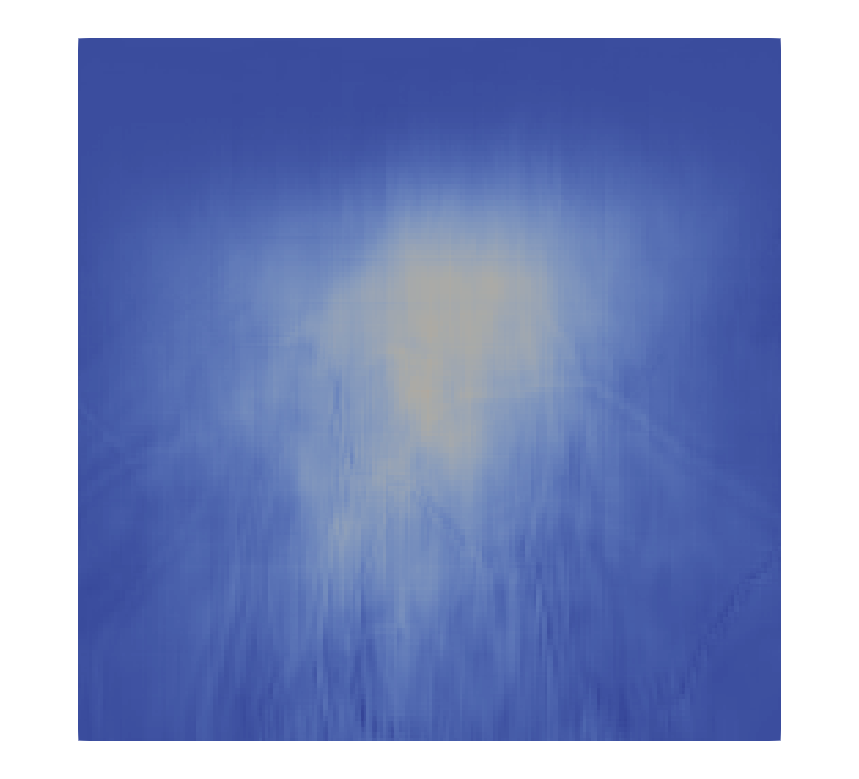}
    \hspace{-5mm}
    \includegraphics[width=0.215\textwidth]{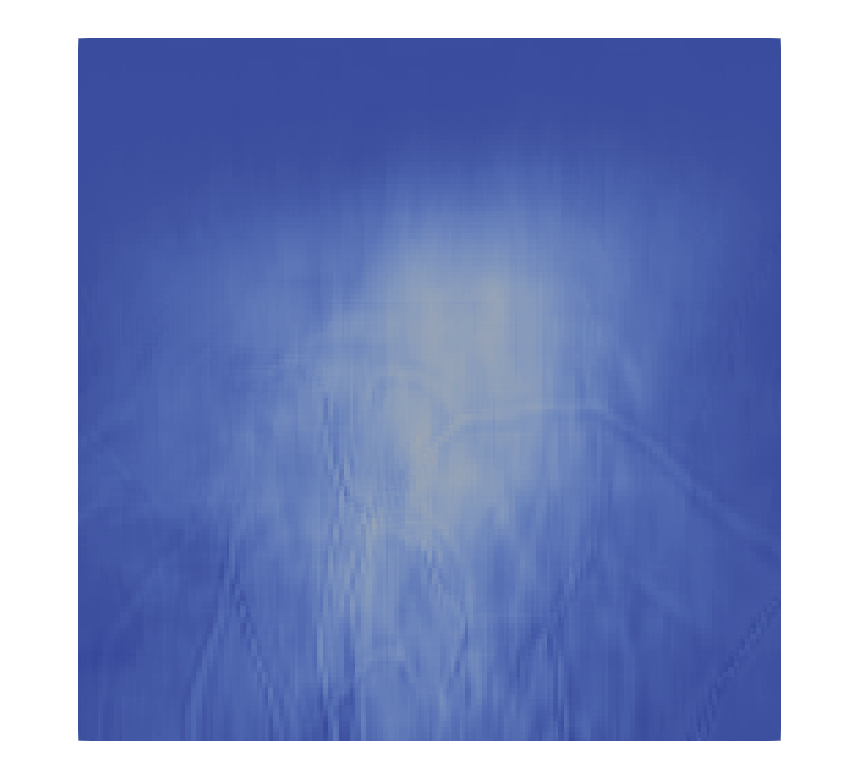}
    \hspace{-5mm}
    \includegraphics[width=0.215\textwidth]{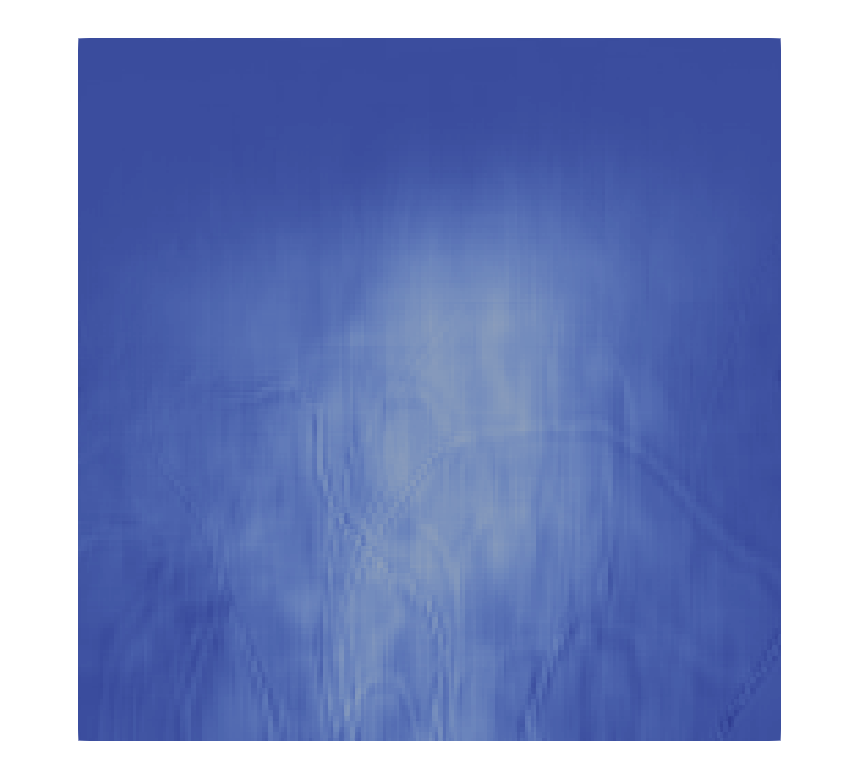}
    \vspace{-10mm}
    \caption{Multi-level mean field approximation after the initial estimation round.}
    \label{fig:mean-init}

    \smallskip

    \includegraphics[width=0.215\textwidth]{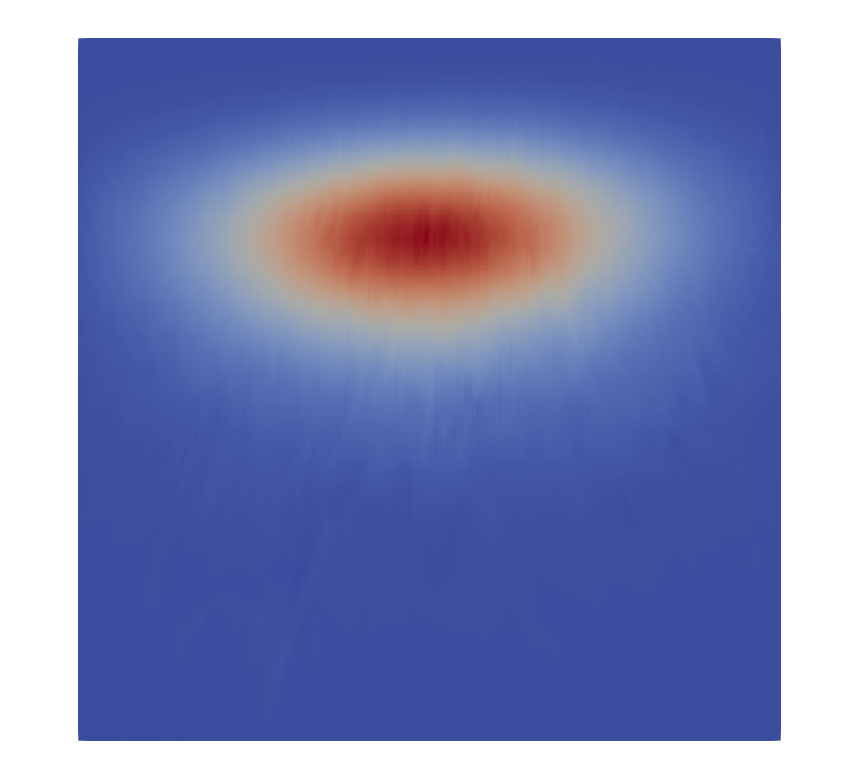}
    \hspace{-5mm}
    \includegraphics[width=0.215\textwidth]{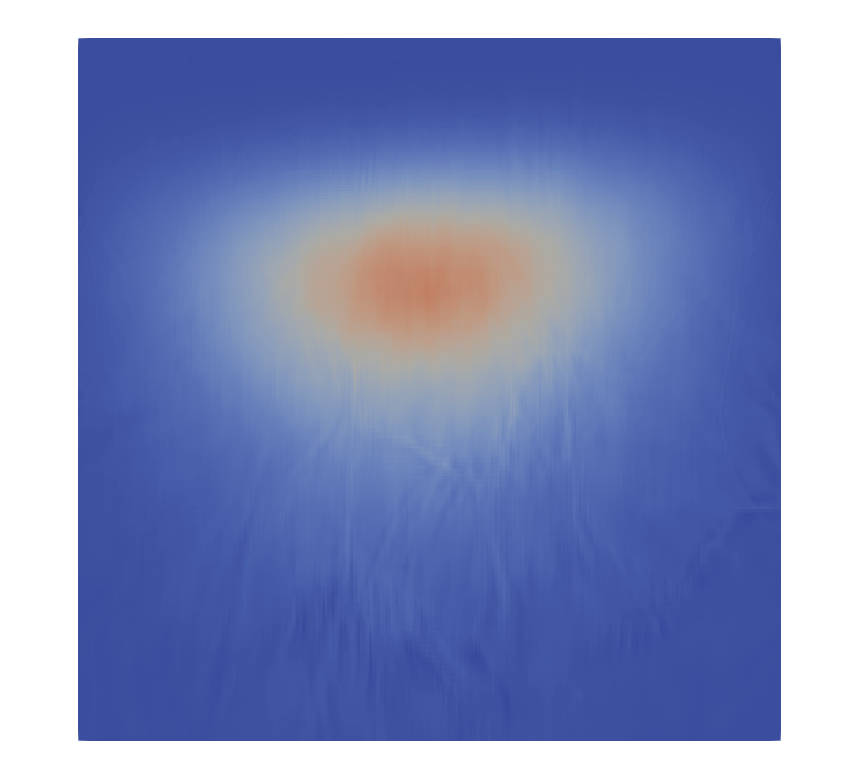}
    \hspace{-5mm}
    \includegraphics[width=0.215\textwidth]{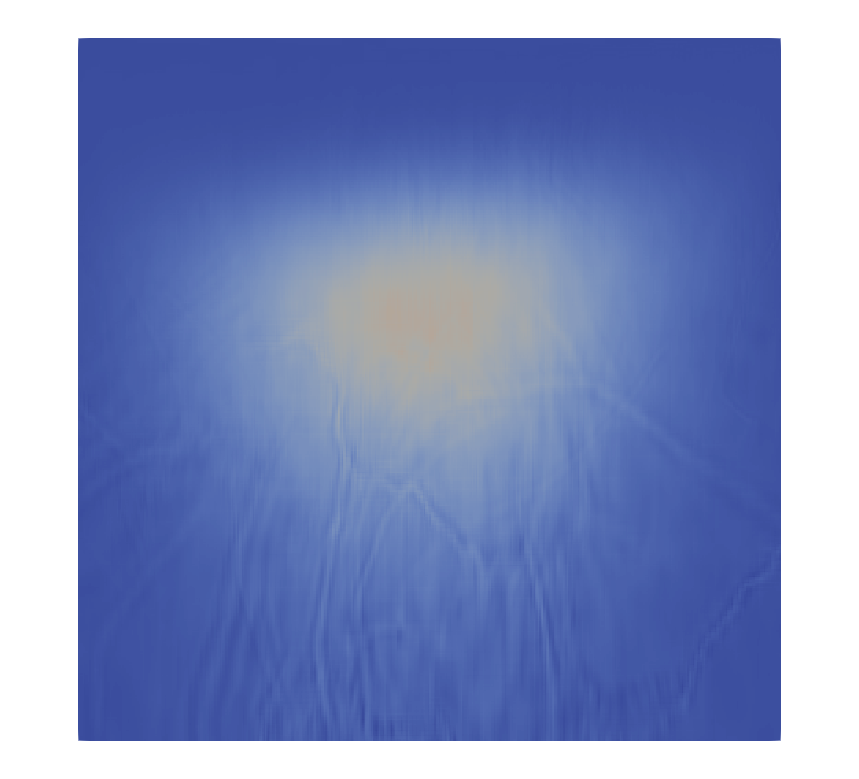}
    \hspace{-5mm}
    \includegraphics[width=0.215\textwidth]{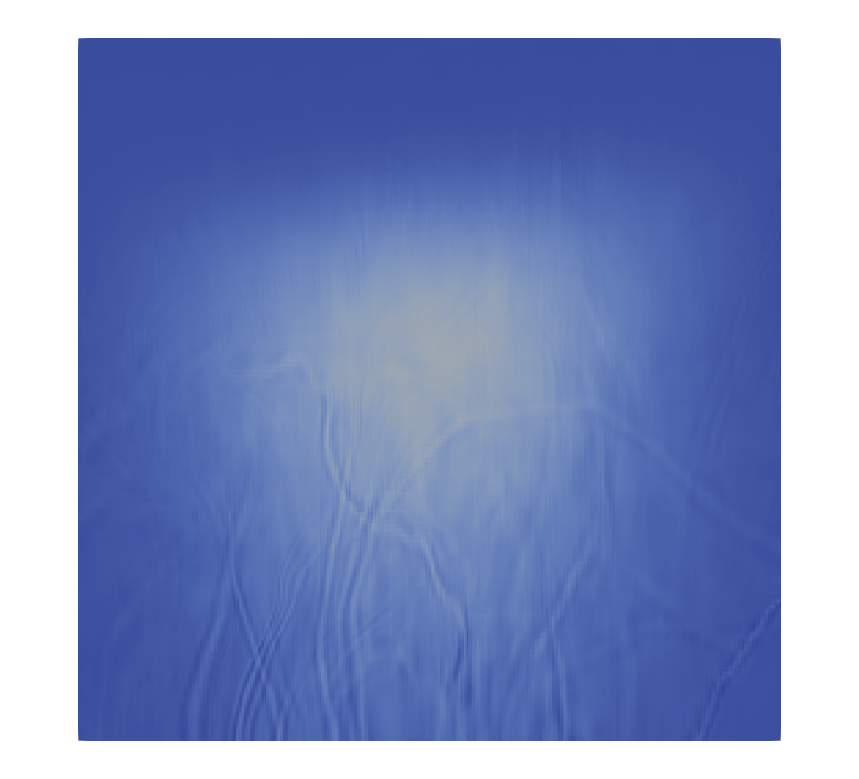}
    \hspace{-5mm}
    \includegraphics[width=0.215\textwidth]{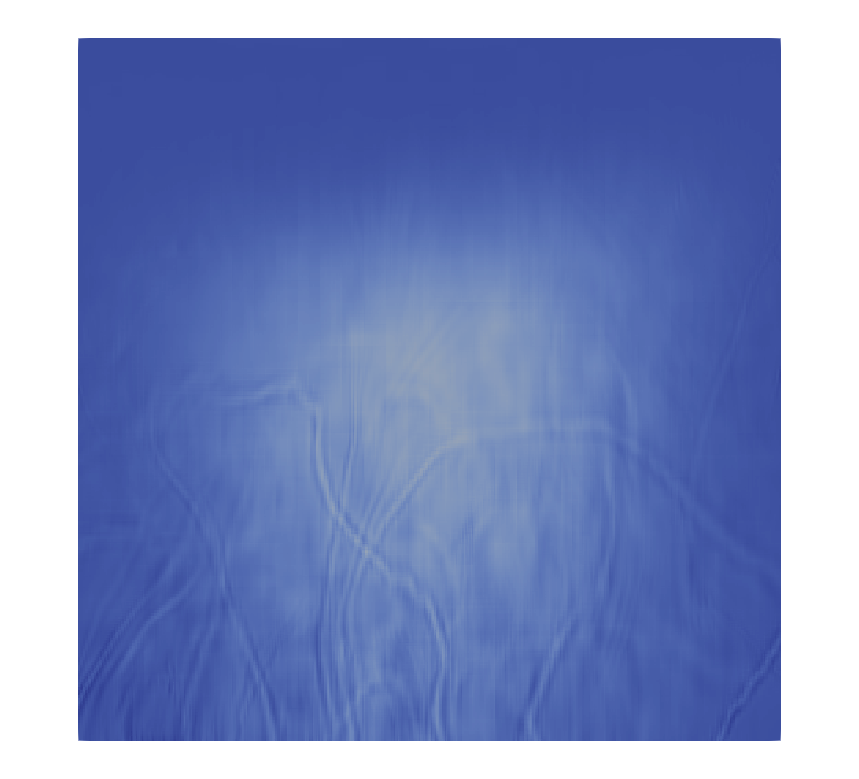}
    \vspace{-10mm}
    \caption{Multi-level mean field approximation at an intermediate estimation round.}
    \label{fig:mean-intermediate}

    \smallskip

    \includegraphics[width=0.215\textwidth]{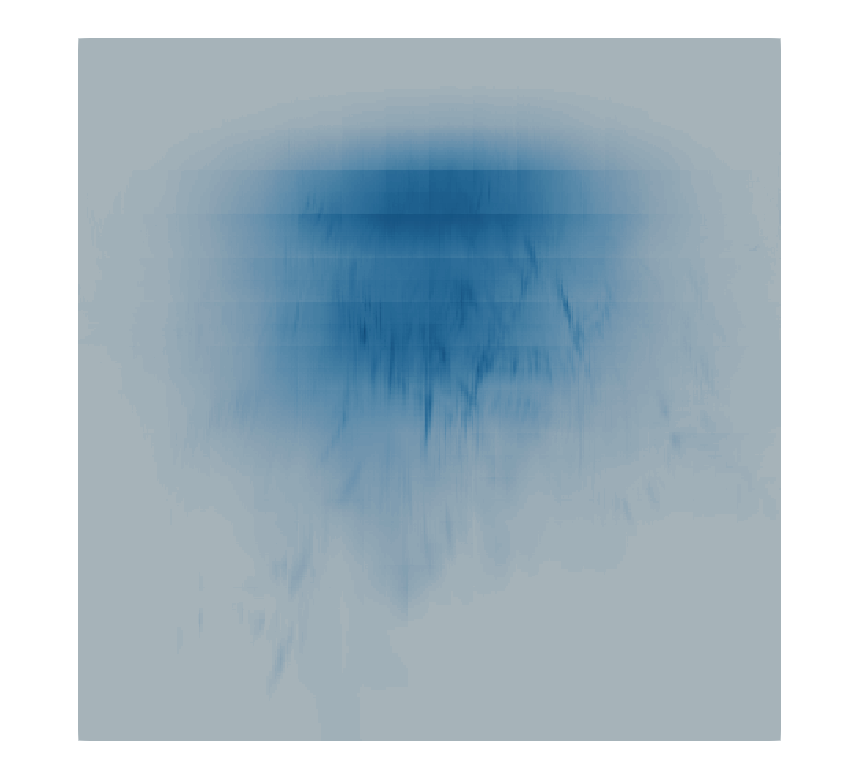}
    \hspace{-5mm}
    \includegraphics[width=0.215\textwidth]{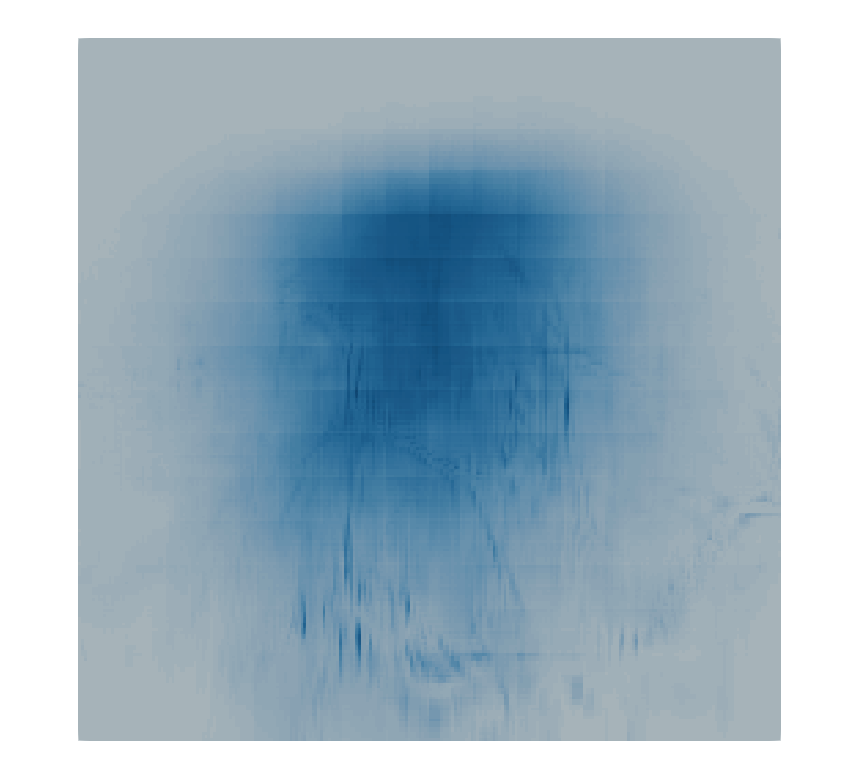}
    \hspace{-5mm}
    \includegraphics[width=0.215\textwidth]{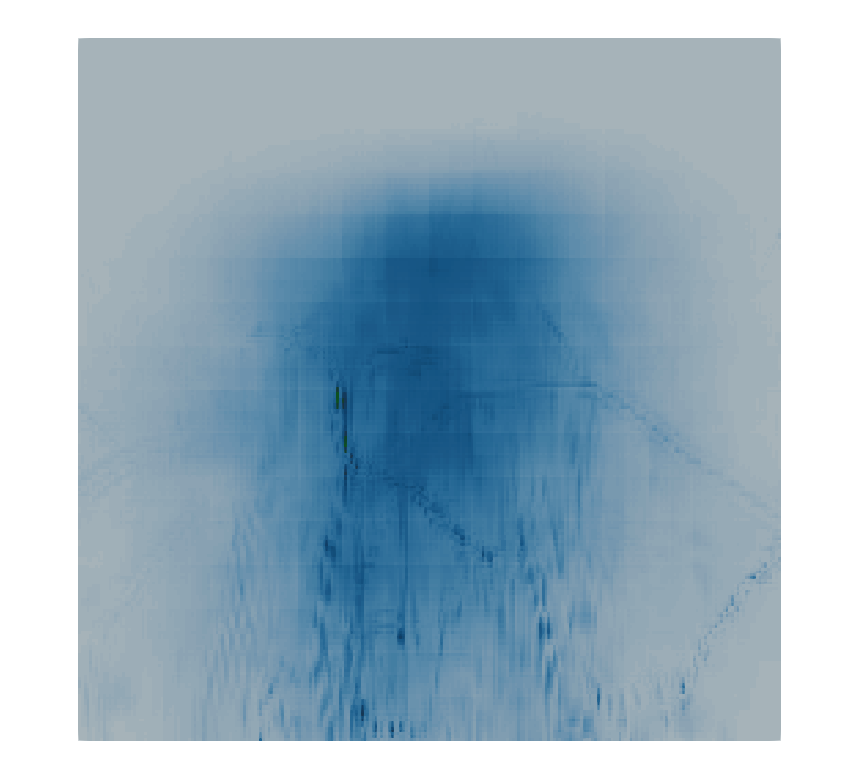}
    \hspace{-5mm}
    \includegraphics[width=0.215\textwidth]{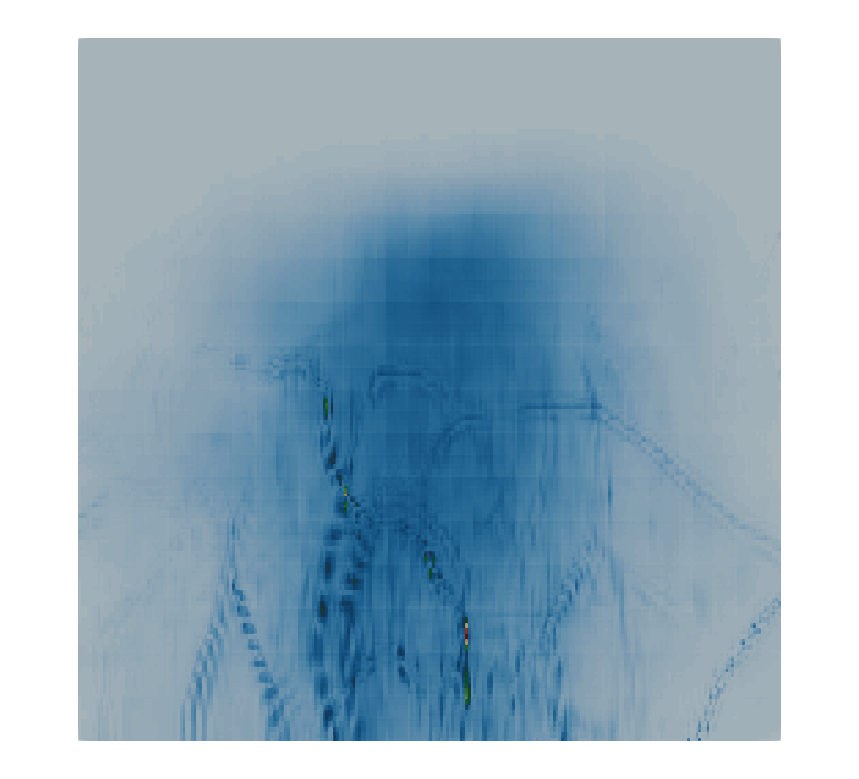}
    \hspace{-5mm}
    \includegraphics[width=0.215\textwidth]{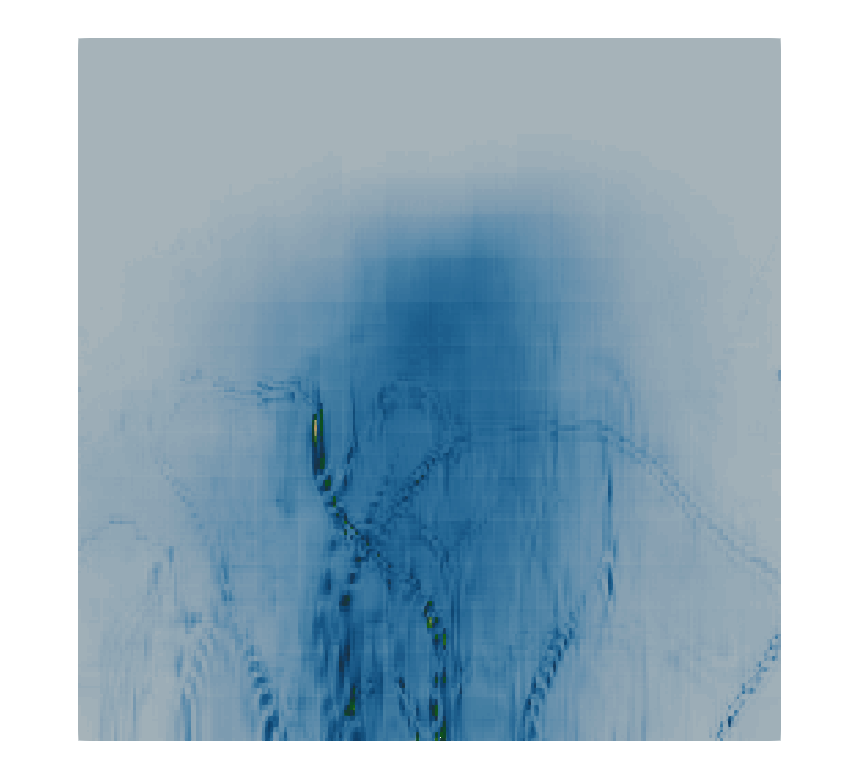}
    \vspace{-10mm}
    \caption{Multi-level marginal sample variance after the initial estimation round.}
    \label{fig:svar-init}

    \smallskip

    \includegraphics[width=0.215\textwidth]{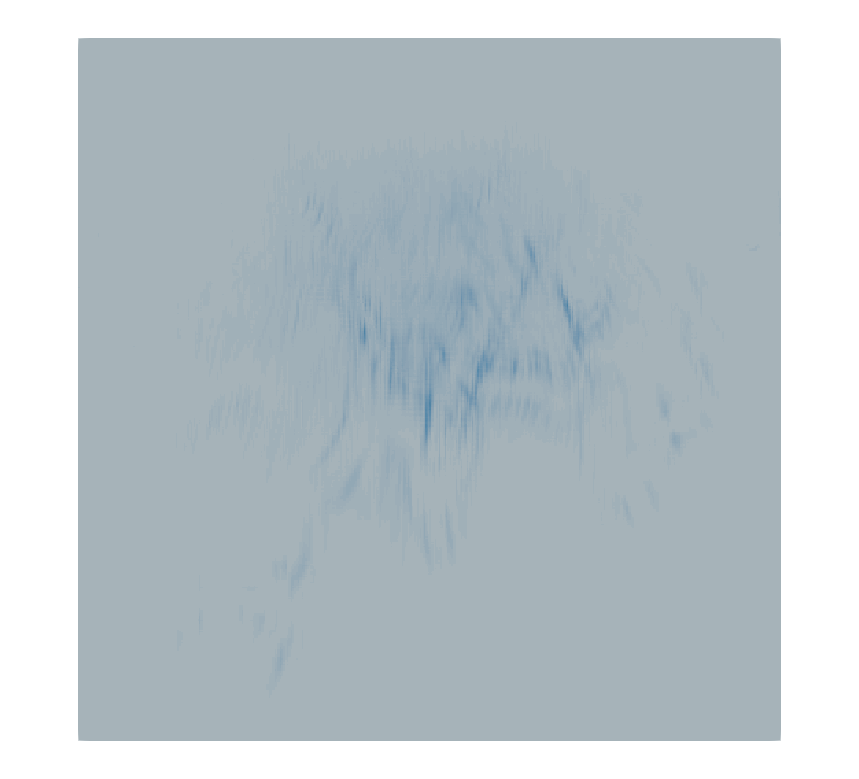}
    \hspace{-5mm}
    \includegraphics[width=0.215\textwidth]{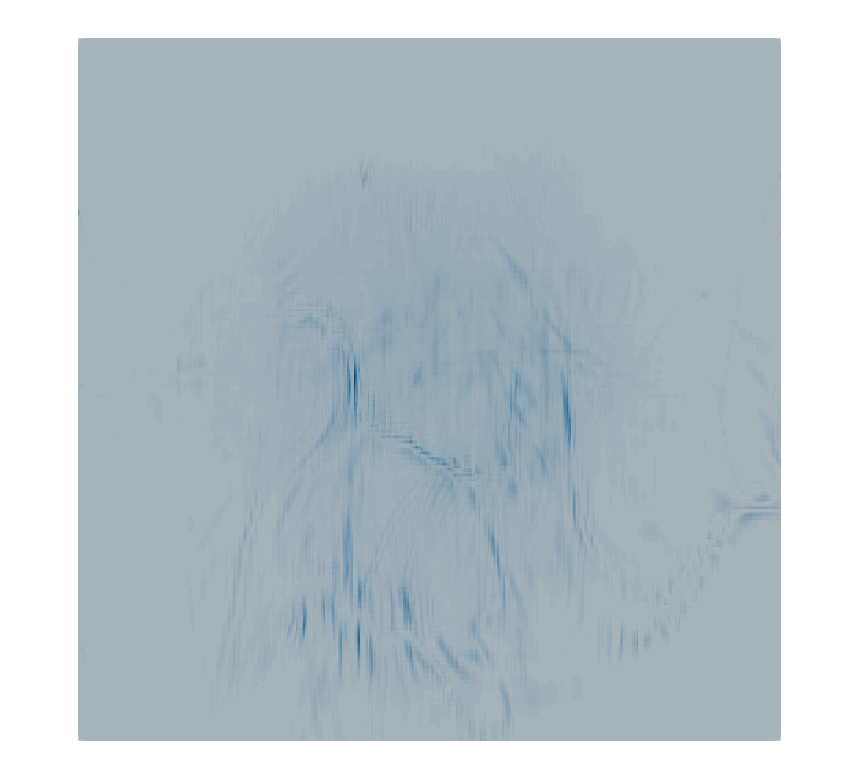}
    \hspace{-5mm}
    \includegraphics[width=0.215\textwidth]{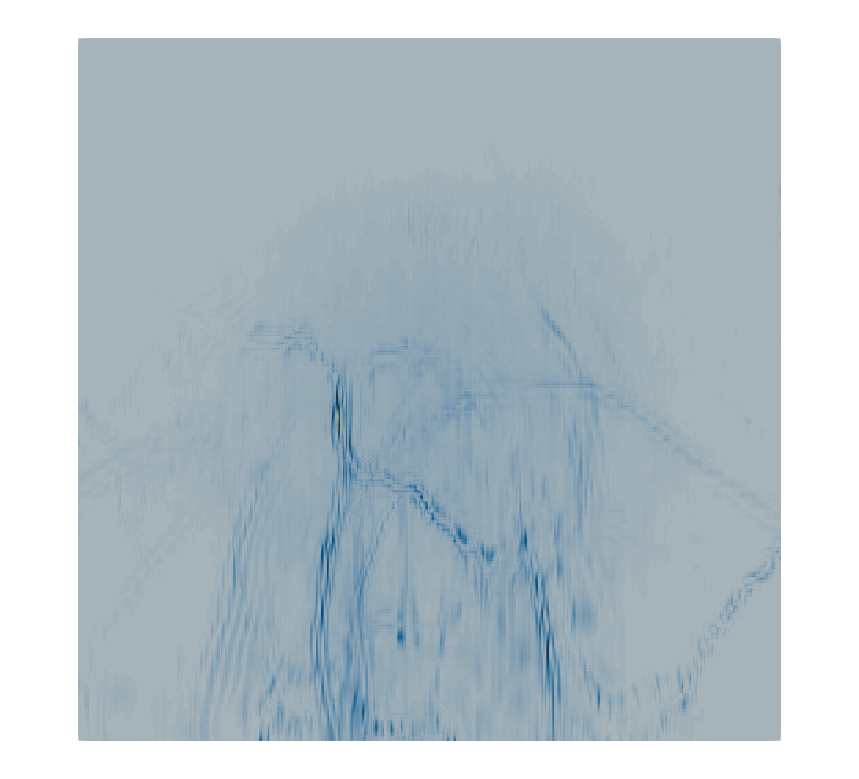}
    \hspace{-5mm}
    \includegraphics[width=0.215\textwidth]{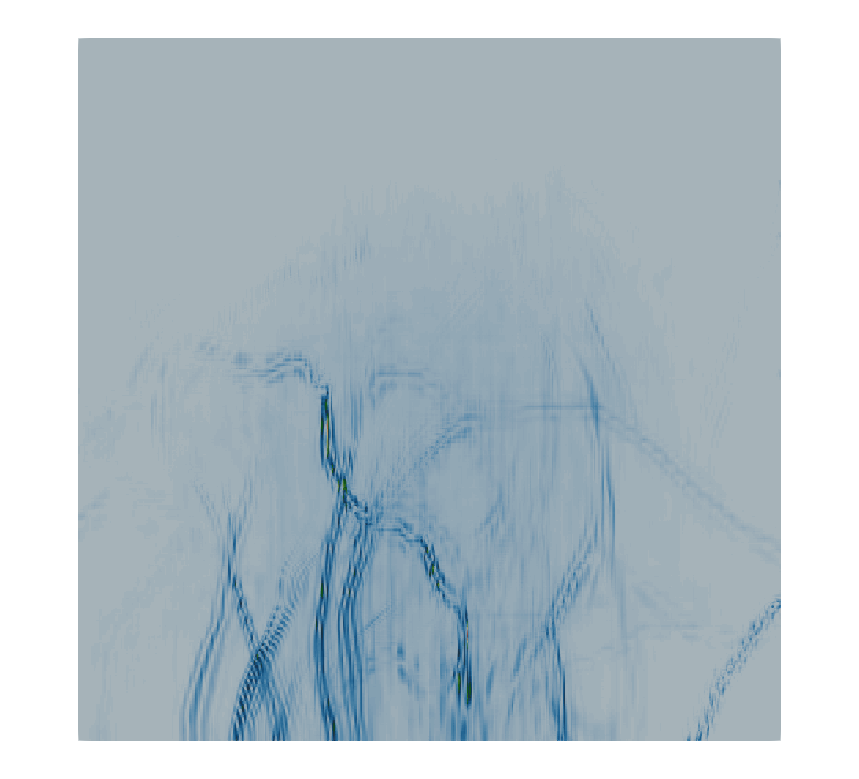}
    \hspace{-5mm}
    \includegraphics[width=0.215\textwidth]{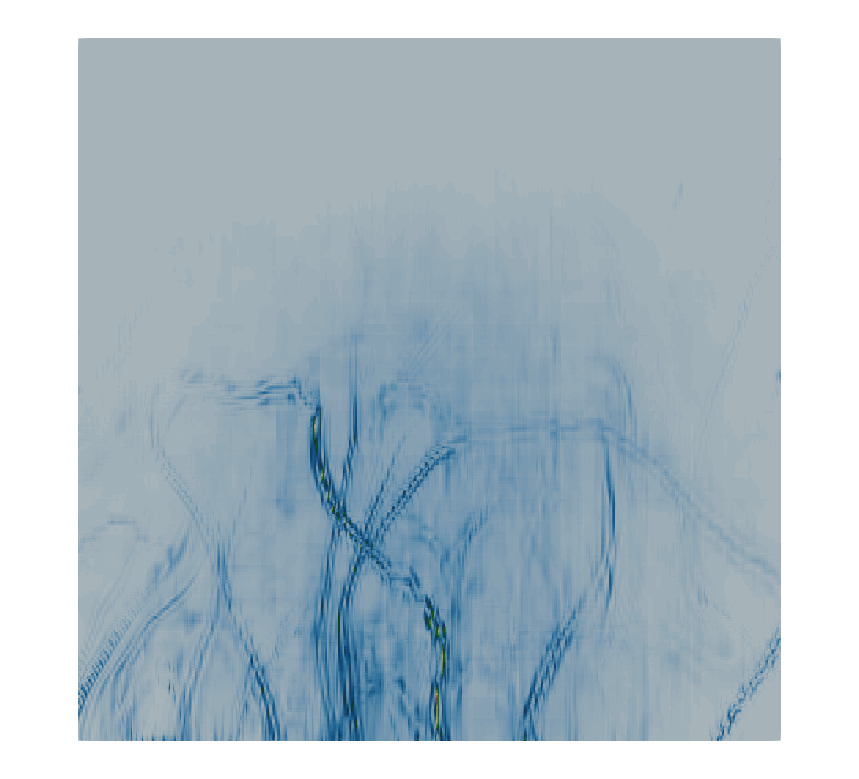}
    \vspace{-10mm}
    \caption{Multi-level marginal sample variance at an intermediate estimation round.}
    \label{fig:svar-intermediate}

\end{figure}

The columns in \Cref{fig:sample1}~-~\Cref{fig:svar-intermediate} show
the evolution in time and correspond to the time points $t_n = 0.1, 0.2, \dots 0.5$.
For an illustration of two samples 
of the transport solution over time, $\brho^{(1, n)}_{\ell}$ and $\brho^{(2,n)}_{\ell}$, we refer to \Cref{fig:sample1,fig:sample2}.
Already at $t_n = 0.1$, the two samples exhibit different behaviour,
deviating visibly from the Gaussian initial condition.
It is also worth noting that as time progresses,
due to the different
realizations of the flux field $\bq(\omega, \bx)$, the samples differ more and more from each other.
\Cref{fig:mean-para} shows the mean field approximation
$E_{M_\ell=4}^{\text{MC}}[\bu_{\ell}]$ over time, after the parallel computation
of four samples, including $\brho^{(1, n)}_{\ell}$ and $\brho^{(2,n)}_{\ell}$.
The influence of both samples can still be seen, but
the mean field approximation is already recognizable, especially for the earlier time points.
This figure demonstrates in particular the functionality of the implementation.
%
In~\Cref{fig:mean-init,fig:mean-intermediate},
the multi-level mean field approximations $\sum_{\ell=0}^{L_0} \rP_{\ell}^{L} E_{0, \ell}^{\text{MC}}[\bv_{\ell}]$
and $\sum_{\ell=0}^{L_{\ttti}} \rP_{\ell}^{L} E_{\ttti, \ell}^{\text{MC}}[\bv_{\ell}]$ according to~\eqref{eq:mlmc-estimator-full-solution}
are shown after the initial and an intermediate estimation round, respectively.
It is clearly visible that the mean field approximation becomes smoother
and more refined with additional estimation rounds.
\Cref{fig:svar-init,fig:svar-intermediate} show the marginal multi-level sample variances
$\sum_{\ell=0}^{L_0} V^{\text{MC}}_{0, \ell}[\bv_\ell]$ and
$\sum_{\ell=0}^{L_{\ttti}} V^{\text{MC}}_{\ttti, \ell}[\bv_\ell]$ across different time steps,
again after the initial and after an intermediate estimation round.
While the estimation of these quantities is not strong enough
to control the sampling error of the RF estimate,
it gives a good intuition of the uncertainty in the solution.
We see that, with additional estimation rounds,
the uncertainty in the solution decreases.
However, we also see that as the time steps
$t_n$ progress, fine-scale features emerge that incorporate the
uncertainty from the higher levels and the later time points into the estimate.
In conclusion, these figures demonstrate the functionality of the implementation and
give an intuition about the problem setup and the algorithmic capabilities.

\subsection{Computational Results}
\label{subsec:computational-results}

Committing to the setting of the numerical experiments
described~\Cref{subsec:problem-configuration-and-illustration},
the goal of this section is to support the theoretical findings,
to demonstrate the functionality,
and to verify the assumptions.

\paragraph{Full field estimates are no additional burden}

One central claim in this paper is that the full field estimates,
even though they provide less variance reduction than the estimates
for QoIs (cf.~\Cref{corollary:relation-beta}),
do not require additional computational resources in terms of memory or computing time.
To underline this, we compare the computational results of two implementations:
one compiled with the full field multi-index update algorithm
of~\Cref{subsec:sparse-multi-index-finite-element-update-algorithm} (ON),
and one compiled without the functionality (OFF).
The results are shown in~\Cref{fig:computational-results-1} using the
compute time budget of $\rT_{\rB}=1$ hour.
The top row verifies that~\Cref{assumption:mlmc} is justified
by presenting the estimates for
$\widehat{\alpha}_{\bu}, \widehat{\alpha}_{\rQ}, \widehat{\beta}_{\bv}, \widehat{\beta}_{\rY}, \widehat{\gamma}_{\rC\rT}, \widehat{\gamma}_{\mathrm{Mem}}$
computed with~\eqref{eq:alpha-fit}.
To keep both experiments comparable, we optimize the multi-level hierarchy
for the estimation of the QoI and compute the full field estimates simultaneously in the ON case.
From the two top plots on the left, we get confirmation for~\Cref{lemma:relation-Y-v},
while in the top right plot, we see that neither the memory footprint nor the computing time
cost is significantly larger with the full field estimates.
This plot also illustrates that~\Cref{lemma:memory-consumption} is valid since
the total memory footprint (blue and orange horizontal lines)
is just above the measured memory of the largest level (bar plot on the very right)
and below the budget (red horizontal line).
This also confirms that~\eqref{eq:knapsack-mlmc-mem} is respected.

The second row of~\Cref{fig:computational-results-1} shows on the very left all
multi-indices $(s, \ell)$ used during the simulation.
For some levels, multiple communication splits are tried to find the optimal load distribution,
since the minimum in~\eqref{eq:comm-split-formula} might change over the estimation rounds.
It is clearly visible that the memory intensive pairs $(s, \ell)$ in the top right corner
are avoided, presenting the solution to~\Cref{problem:approximated-mlmc-knapsack}
together with the optimal sample distribution plot in the middle.
Here, we can see again that full field estimates do not cause significant
losses in computing time as the optimal sample distribution is almost identical.
Lastly, the bar plot on the very right presents how much time is spent on every level.
The horizontal lines indicate the total time used, verifying that
the computing time constraint~\eqref{eq:knapsack-mlmc-ct} is respected, too.

\begin{figure}
    \centering

    \vspace{-0.3cm}

    \includegraphics[width=\textwidth]{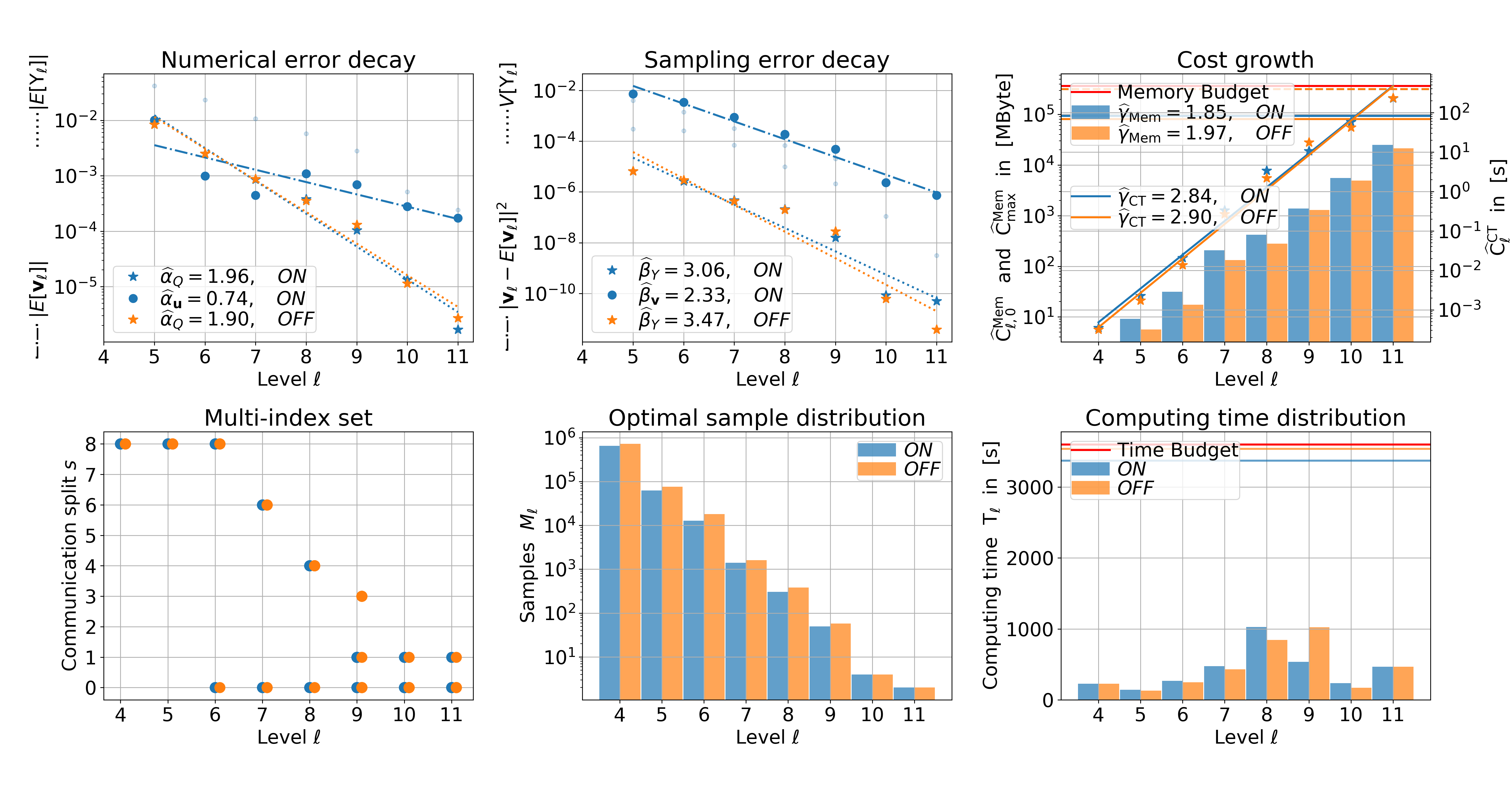}

    \vspace{-0.7cm}

    \caption{Implementations with (ON) and without (OFF) full solution update funcitonality.}
    \label{fig:computational-results-1}
\end{figure}

\paragraph{Full field guided multi-level hierarchy}
The experiment summarized in~\Cref{fig:computational-results-2},
optimizes the multi-level hierarchy with~\eqref{eq:optimal-Ml-estimated}
using the accumulative update of the Bochner norm presented in~\Cref{lemma:accumulation-of-z}.
Here, we extend the reserved compute time budgets from $1$ to $3$ hours.
In the top row, we see again that all assumptions are valid,
though convergence rates are weaker for the full field estimates.
As a consequence, the algorithm does not visit level 10 or 11 for $\rT_{\rB} = 1h$.
Even with extended computing times $\rT_{\rB} \in \set{2h, 3h}$, it still does not visit level 11.
This is due to the worse convergence of $\widehat{\alpha}_{\bu}$ compared to $\widehat{\alpha}_{\rQ}$,
i.e., the computational cost for the new level is not worth while for the
bias reduction it brings in~\eqref{eq:squared-bias-estimate-mlmc}.
This is also in line with~\Cref{corollary:bmlmc-corollary}
where the lower bound of the smallest possible error depends on $\alpha$.
Here,
$(\mathrm{Mem}_{\rB})^{-\widehat{\alpha}_{\bu}} \sim 512^{-0.78} \approx 0,0077$
is bigger than
$(\mathrm{Mem}_{\rB})^{-\widehat{\alpha}_{\rQ}} \sim 512^{-1.90} \approx 0,000007$
and thus, in the memory constraint setting, this is the limiting factor for the error.
Lastly, the plot of~\Cref{fig:computational-results-2} in the lower right corner shows
the estimates of $\widehat{\err}^{\mathrm{num}}$ over $\widehat{\err}^{\mathrm{sam}}$.
Again, the algorithm favors minimizing the sampling error rather than the bias.
A detailed discussion on this type of plot is given in~\cite{baumgarten2024fully},
only emphasizing that it illustrates the trade-off between the sampling error and the bias,
and the round-based dynamic programming approach of the algorithm.

\begin{figure}

    \vspace{-0.3cm}

    \includegraphics[width=\textwidth]{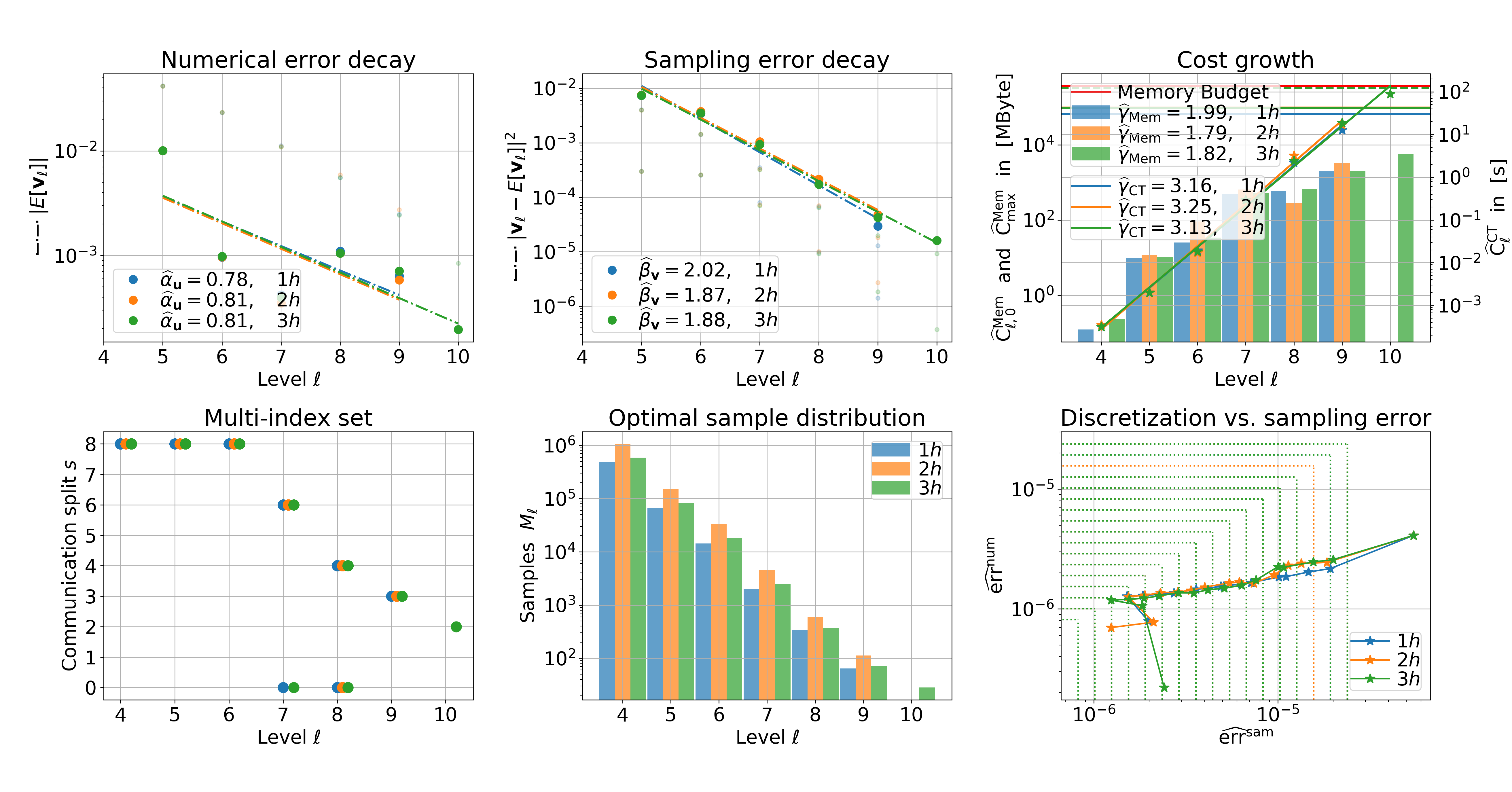}

    \vspace{-0.7cm}

    \caption{Extension of reserverd computing times, i.e., $\rT_{\rB} \in \set{1h, 2h, 3h}$.}
    \label{fig:computational-results-2}
\end{figure}

    \section{Discussion and Conclusion}
\label{sec:discussion-and-outlook}

We present a new adaptation to the budgeted MLMC method~\cite{baumgarten2024fully}
by incorporating a memory constraint and an accumulation algorithm 
to compute RF estimates with full spatial domain resolution.
The proposed algorithm is highly performant, robust, offers several optimal properties,
and is widely applicable to complex and intricate PDE systems.
High performance is achieved by tightly integrating the MLMC method
into the FEM solver and employing DDP techniques.
As a result, the method is more intrusive in implementation,
yet it maintains modularity across algorithms.
While not simple to program, the method is realized only on a minimal technology stack
(C++ and MPI), resulting in little overhead and friction.

The method’s optimality is rooted in solving the resource allocation~\Cref{problem:approximated-mlmc-knapsack}.
To this end, \Cref{lemma:accumulation-of-z}~allows to apply Bellman’s optimality principle
to optimally determine the number of samples on each level via~\eqref{eq:optimal-Ml-estimated}.
The result of ~\Cref{lemma:memory-consumption} ensures that the memory footprint is controlled.
In~\Cref{sec:numerical-experiments}, we numerically demonstrate that the
multi-index set we compute with~\eqref{eq:comm-split-formula} is effectively enforcing the constraints.
Lastly, \Cref{corollary:bmlmc-corollary} establishes a lower bound on the possible root mean squared error.
This lower bound is the result of the memory constraint and depends on the regularity of the solution
through the exponent $\alpha$.

The robustness of the algorithm stems from its adaptability
and by enforcing the hardware limits through the constraints~\eqref{eq:knapsack-mlmc-ct} and~\eqref{eq:knapsack-mlmc-mem}.
This enables high flexibility and allows application to a wide range of problems and algorithms.
The wide applicability also stems from the arbitrary choices in~\Cref{alg:mx-fem}.
In principle, any FEM-based application can be implemented here,
using the same distributed data structure for all applications.

    \section*{Acknowledgement}

We thank the M++ development group, led by Christian Wieners,
particularly, the contributions from Daniele Corallo,
David Schneiderhan, Simon Wülker and Tobias Merkel.
All have contributed features used in this work.
Further, we acknowledge the access to the HoreKa supercomputer and the technical support
by the National High-Performance Computing Center (NHR) at KIT.

    \bibliographystyle{ieeetr}

    \bibliography{bibliography}

\end{document}